\providecommand{\U}[1]{\protect \rule{.1in}{.1in}}
\newtheorem{theorem}{Theorem}
\theoremstyle{plain}
\newtheorem{corollary}{Corollary}
\newtheorem{definition}{Definition}
\newtheorem{lemma}{Lemma}
\newtheorem{remark}{Remark}
\numberwithin{equation}{section}
\begin{document}
\title[multilinear $BMO$ estimates]{multilinear $BMO$ estimates for the commutators of multilinear fractional
maximal and integral{\ operators }on the product generalized Morrey spaces}
\author{F. GURBUZ}
\address{ANKARA UNIVERSITY, FACULTY OF SCIENCE, DEPARTMENT OF MATHEMATICS, TANDO\u{G}AN
06100, ANKARA, TURKEY }
\email{feritgurbuz84@hotmail.com}
\urladdr{}
\thanks{}
\thanks{}
\thanks{}
\date{}
\subjclass[2000]{ 42B20, 42B25, 42B35}
\keywords{{multi-sublinear fractional maximal\ operator; multilinear fractional
integral\ operator; multilinear commutator; generalized Morrey space;
generalized vanishing Morrey space; }multilinear $BMO${ space}}
\dedicatory{ }
\begin{abstract}
In this paper, we establish multilinear $BMO$ estimates for commutators of
multilinear fractional maximal and integral{\ operators both on product
generalized Morrey spaces and product generalized vanishing Morrey spaces},
respectively. Similar results are still valid for commutators of multilinear
maximal and singular integral{\ operators. }

\end{abstract}
\maketitle

\section{Introduction and main results}

Because of the need for study of the local behavior of solutions of second
order elliptic partial differential equations (PDEs) and together with the now
well-studied Sobolev Spaces, constitude a formidable three parameter family of
spaces useful for proving regularity results for solutions to various PDEs,
especially for non-linear elliptic systems, in 1938, Morrey \cite{Morrey}
introduced the classical Morrey spaces $L_{p,\lambda}$ which naturally are
generalizations of the classical Lebesgue spaces.

We will say that a function $f\in L_{p,\lambda}=L_{p,\lambda}\left(
{\mathbb{R}^{n}}\right)  $ if%
\begin{equation}
\sup_{x\in{\mathbb{R}^{n},r>0}}\left[  r^{-\lambda}%
%TCIMACRO{\dint \limits_{B(x,r)}}%
%BeginExpansion
{\displaystyle \int \limits_{B(x,r)}}
%EndExpansion
\left \vert f\left(  y\right)  \right \vert ^{p}dy\right]  ^{1/p}<\infty.
\label{1.1}%
\end{equation}
Here, $1<p<\infty$ and $0<\lambda<n$ and the quantity of (\ref{1.1}) is the
$\left(  p,\lambda \right)  $-Morrey norm, denoted by $\left \Vert f\right \Vert
_{L_{p,\lambda}}$. In recent years, more and more researches focus on function
spaces based on Morrey spaces to fill in some gaps in the theory of Morrey
type spaces (see, for example, \cite{GulJIA, Gurbuz1, Gurbuz2, Gurbuz3,
Gurbuz4, Gurbuz5, Karaman, Lin, Pal, Softova, Yu}). Moreover, these spaces are
useful in harmonic analysis and PDEs. But, this topic exceeds the scope of
this paper. Thus, we omit the details here. On the other hand, the study of
the operators of harmonic analysis in vanishing Morrey space, in fact has been
almost not touched. A version of the classical Morrey space $L_{p,\lambda
}({\mathbb{R}^{n}})$ where it is possible to approximate by "nice" functions
is the so called vanishing Morrey space $VL_{p,\lambda}({\mathbb{R}^{n}})$ has
been introduced by Vitanza in \cite{Vitanza1} and has been applied there to
obtain a regularity result for elliptic PDEs. This is a subspace of functions
in $L_{p,\lambda}({\mathbb{R}^{n}})$, which satisfies the condition%
\[
\lim_{r\rightarrow0}\sup_{\underset{0<t<r}{x\in{\mathbb{R}^{n}}}}\left[
t^{-\lambda}%
%TCIMACRO{\dint \limits_{B(x,t)}}%
%BeginExpansion
{\displaystyle \int \limits_{B(x,t)}}
%EndExpansion
\left \vert f\left(  y\right)  \right \vert ^{p}dy\right]  ^{1/p}=0,
\]
where $1<p<\infty$ and $0<\lambda<n$ for brevity, so that%
\[
VL_{p,\lambda}({\mathbb{R}^{n}})=\left \{  f\in L_{p,\lambda}({\mathbb{R}^{n}%
}):\lim_{r\rightarrow0}\sup_{\underset{0<t<r}{x\in{\mathbb{R}^{n}}}}%
t^{-\frac{\lambda}{p}}\Vert f\Vert_{L_{p}(B(x,t))}=0\right \}  .
\]
Later in \cite{Vitanza2} Vitanza has proved an existence theorem for a
Dirichlet problem, under weaker assumptions than in \cite{Miranda} and a
$W^{3,2}$ regularity result assuming that the partial derivatives of the
coefficients of the highest and lower order terms belong to vanishing Morrey
spaces depending on the dimension. For the properties and applications of
vanishing Morrey spaces, see also \cite{Cao-Chen}.

After studying Morrey spaces in detail, researchers have passed to the concept
of generalized Morrey spaces. Firstly, motivated by the work of \cite{Morrey},
Mizuhara \cite{Miz} introduced generalized Morrey spaces $M_{p,\varphi}$.
Then, Guliyev \cite{GulJIA} defined the generalized Morrey spaces
$M_{p,\varphi}$ with normalized norm as follows:

\begin{definition}
\cite{GulJIA} \textbf{(generalized Morrey space)\ }Let $\varphi(x,r)$ be a
positive measurable function on ${\mathbb{R}^{n}}\times(0,\infty)$. If
$0<p<\infty$, then the generalized Morrey space $M_{p,\varphi}\equiv
M_{p,\varphi}({\mathbb{R}^{n}})$ is defined by%
\[
\left \{  f\in L_{p}^{loc}({\mathbb{R}^{n}}):\Vert f\Vert_{M_{p,\varphi}}%
=\sup \limits_{x\in{\mathbb{R}^{n}},r>0}\varphi(x,r)^{-1}|B(x,r)|^{-\frac{1}%
{p}}\Vert f\Vert_{L_{p}(B(x,r))}<\infty \right \}  .
\]

\end{definition}

Obviously, the above definition recover the definition of $L_{p,\lambda
}({\mathbb{R}^{n}})$ if we choose $\varphi(x,r)=r^{\frac{\lambda-n}{p}}$, that
is
\[
L_{p,\lambda}\left(  {\mathbb{R}^{n}}\right)  =M_{p,\varphi}\left(
{\mathbb{R}^{n}}\right)  \mid_{\varphi(x,r)=r^{\frac{\lambda-n}{p}}}.
\]

Everywhere in the sequel we assume that $\inf \limits_{x\in{\mathbb{R}^{n}%
},r>0}\varphi(x,r)>0$ which makes the above spaces non-trivial, since the
spaces of bounded functions are contained in these spaces. We point out that
$\varphi(x,r)$ is a measurable nonnegative function and no monotonicity type
condition is imposed on these spaces.

In \cite{GulJIA}, \cite{Gurbuz3}, \cite{Karaman}, \cite{Lin}, \cite{Miz} and
\cite{Softova}, the boundedness of the maximal operator and
Calder\'{o}n-Zygmund operator on the generalized Morrey spaces $M_{p,\varphi}$
has been obtained, respectively.

For brevity, in the sequel we use the notations%
\[
\mathfrak{M}_{p,\varphi}\left(  f;x,r\right)  :=\frac{|B(x,r)|^{-\frac{1}{p}%
}\, \Vert f\Vert_{L_{p}(B(x,r))}}{\varphi(x,r)}%
\]
and%
\[
\mathfrak{M}_{p,\varphi}^{W}\left(  f;x,r\right)  :=\frac{|B(x,r)|^{-\frac
{1}{p}}\, \Vert f\Vert_{WL_{p}(B(x,r))}}{\varphi(x,r)}.
\]

In this paper, extending the definition of vanishing Morrey spaces
\cite{Vitanza1}, we introduce generalized vanishing Morrey spaces
$VM_{p,\varphi}({\mathbb{R}^{n}})$ with normalized norm in the following form.

\begin{definition}
\textbf{(generalized vanishing Morrey space) }The generalized vanishing Morrey
space $VM_{p,\varphi}({\mathbb{R}^{n}})$ is defined by%
\[
\left \{  f\in M_{p,\varphi}({\mathbb{R}^{n}}):\lim \limits_{r\rightarrow0}%
\sup \limits_{x\in{\mathbb{R}^{n}}}\mathfrak{M}_{p,\varphi}\left(
f;x,r\right)  =0\right \}  .
\]

\end{definition}

Everywhere in the sequel we assume that%
\begin{equation}
\lim_{r\rightarrow0}\frac{1}{\inf \limits_{x\in{\mathbb{R}^{n}}}\varphi
(x,r)}=0, \label{2}%
\end{equation}
and%
\begin{equation}
\sup_{0<r<\infty}\frac{1}{\inf \limits_{x\in{\mathbb{R}^{n}}}\varphi
(x,r)}<\infty, \label{3}%
\end{equation}
which make the spaces $VM_{p,\varphi}({\mathbb{R}^{n}})$ non-trivial, because
bounded functions with compact support belong to this space. The spaces
$VM_{p,\varphi}({\mathbb{R}^{n}})$ are Banach spaces with respect to the norm
\[
\Vert f\Vert_{VM_{p,\varphi}}\equiv \Vert f\Vert_{M_{p,\varphi}}=\sup
\limits_{x\in{\mathbb{R}^{n}},r>0}\mathfrak{M}_{p,\varphi}\left(
f;x,r\right)  .
\]
The spaces $VM_{p,\varphi}({\mathbb{R}^{n}})$ are also closed subspaces of the
Banach spaces $M_{p,\varphi}({\mathbb{R}^{n}})$, which may be shown by
standard means.

Furthermore, we have the following embeddings:%
\[
VM_{p,\varphi}\subset M_{p,\varphi},\qquad \Vert f\Vert_{M_{p,\varphi}}%
\leq \Vert f\Vert_{VM_{p,\varphi}}.
\]

On the other hand, it is well known that, for the purpose of researching
non-smoothness partial differential equation, mathematicians pay more
attention to the singular integrals. Moreover, the fractional type operators
and their weighted boundedness theory play important roles in harmonic
analysis and other fields, and the multilinear operators arise in numerous
situations involving product-like operations, see \cite{Chen1, Chen, Fu,
Grafakos-sm, Grafakos1, Grafakos2, Kenig, Moen, Si} for instance.

First of all, we recall some basic properties and notations used in this paper.

Let ${\mathbb{R}^{n}}$ be the $n$-dimensional Euclidean space of points
$x=(x_{1},...,x_{n})$ with norm $|x|=\left(
%TCIMACRO{\tsum \nolimits_{i=1}^{n}}%
%BeginExpansion
{\textstyle \sum \nolimits_{i=1}^{n}}
%EndExpansion
x_{i}^{2}\right)  ^{1/2}$ and corresponding $m$-fold product spaces $\left(
m\in%
%TCIMACRO{\U{2115} }%
%BeginExpansion
\mathbb{N}
%EndExpansion
\right)  $ be $\left(  {\mathbb{R}^{n}}\right)  ^{m}={\mathbb{R}^{n}%
\times \cdots \times \mathbb{R}^{n}}$. Let $B=B(x,r)$ denotes open ball centered
at $x$ of radius $r$ for $x\in{\mathbb{R}^{n}}$ and $r>0$ and $B^{c}(x,r)$ its
complement. Also $|B(x,r)|$ is the Lebesgue measure of the ball $B(x,r)$ and
$|B(x,r)|=v_{n}r^{n}$, where $v_{n}=|B(0,1)|$. We also denote by
$\overrightarrow{y}=\left(  y_{1},\ldots,y_{m}\right)  $, $d\overrightarrow
{y}=dy_{1}\ldots dy_{m}$, and by $\overrightarrow{f}$ the $m$-tuple $\left(
f_{1},...,f_{m}\right)  $, $m$, $n$ the nonnegative integers with $n\geq2$,
$m\geq1$.

Let $\overrightarrow{f}\in L_{p_{1}}^{loc}\left(  {\mathbb{R}^{n}}\right)
\times \cdots \times L_{p_{m}}^{loc}\left(  {\mathbb{R}^{n}}\right)  $. Then
multi-sublinear fractional maximal operator $M_{\alpha}^{\left(  m\right)  }$
is defined by%
\[
M_{\alpha}^{\left(  m\right)  }\left(  \overrightarrow{f}\right)  \left(
x\right)  =\sup_{t>0}\left \vert B\left(  x,t\right)  \right \vert
^{\frac{\alpha}{n}}\left[
%TCIMACRO{\dprod \limits_{i=1}^{m}}%
%BeginExpansion
{\displaystyle \prod \limits_{i=1}^{m}}
%EndExpansion
\frac{1}{\left \vert B\left(  x,t\right)  \right \vert }\int \limits_{B\left(
x,t\right)  }\left \vert f_{i}\left(  y_{i}\right)  \right \vert \right]
d\overrightarrow{y},\text{ \  \ }0\leq \alpha<mn.
\]
From definition, if $\alpha=0$ then $M_{\alpha}^{\left(  m\right)  }$ is the
multi-sublinear maximal operator $M^{\left(  m\right)  }$ and also; in the
case of $m=1$, $M_{\alpha}^{\left(  m\right)  }$ is the classical fractional
maximal operator $M_{\alpha}$.

The theory of multilinear {Calder\'{o}n-Zygmund singular integral operators,
originated from the works of Coifman and Meyer's \cite{CM}, plays an important
role in harmonic analysis. Its study has been attracting a lot of attention in
the last few decades. A systematic analysis of many basic properties of such
multilinear singular integral operators can be found in the articles by
Coifman-Meyer \cite{CM}, Grafakos-Torres \cite{Grafakos1, Grafakos2,
Grafakos3*}, Chen et al. \cite{Chen1}, Fu et al. \cite{Fu}.}

Let $T^{\left(  m\right)  }$ $\left(  m\in%
%TCIMACRO{\U{2115} }%
%BeginExpansion
\mathbb{N}
%EndExpansion
\right)  $ be a multilinear operator initially defined on the $m$-fold product
of Schwartz spaces and taking values into the space of tempered distributions,%
\[
T^{\left(  m\right)  }:S\left(  {\mathbb{R}^{n}}\right)  \times \cdots \times
S\left(  {\mathbb{R}^{n}}\right)  \rightarrow S\left(  {\mathbb{R}^{n}%
}\right)  .
\]
Following \cite{Grafakos1}, recall that the $m$(multi)-linear
{Calder\'{o}n-Zygmund operator }$T^{\left(  m\right)  }$ $\left(  m\in%
%TCIMACRO{\U{2115} }%
%BeginExpansion
\mathbb{N}
%EndExpansion
\right)  ${\ }for test vector $\overrightarrow{f}=\left(  f_{1},\ldots
,f_{m}\right)  $ is defined by%
\[
T^{\left(  m\right)  }\left(  \overrightarrow{f}\right)  \left(  x\right)  =%
%TCIMACRO{\dint \limits_{\left(  {\mathbb{R}^{n}}\right)  ^{m}}}%
%BeginExpansion
{\displaystyle \int \limits_{\left(  {\mathbb{R}^{n}}\right)  ^{m}}}
%EndExpansion
K\left(  x,y_{1},\ldots,y_{m}\right)  \left \{
%TCIMACRO{\dprod \limits_{i=1}^{m}}%
%BeginExpansion
{\displaystyle \prod \limits_{i=1}^{m}}
%EndExpansion
f_{i}\left(  y_{i}\right)  \right \}  dy_{1}\cdots dy_{m},\text{ \  \  \ }x\notin%
%TCIMACRO{\dbigcap \limits_{i=1}^{m}}%
%BeginExpansion
{\displaystyle \bigcap \limits_{i=1}^{m}}
%EndExpansion
suppf_{i},
\]
where $K$ is an $m$-{Calder\'{o}n-Zygmund kernel which is a locally integrable
function defined }off the diagonal $y_{0}=y_{1}=\cdots=y_{m}$ on $\left(
{\mathbb{R}^{n}}\right)  ^{m+1}$ satisfying the {following size estimate:}%
\[
\left \vert K\left(  x,y_{1},\ldots,y_{m}\right)  \right \vert \leq \frac
{C}{\left \vert \left(  x-y_{1},\ldots,x-y_{m}\right)  \right \vert ^{mn}},
\]
for some $C>0$ and some smoothness estimates, see \cite{Grafakos1, Grafakos2,
Grafakos3*} for details.

The result of Grafakos and Torres \cite{Grafakos1, Grafakos3*} shows that the
multilinear {Calder\'{o}n-Zygmund operator is bounded on the product of
Lebesgue spaces.}

\begin{theorem}
\label{TeoA}\cite{Grafakos1, Grafakos3*} Let $T^{\left(  m\right)  }$ $\left(
m\in%
%TCIMACRO{\U{2115} }%
%BeginExpansion
\mathbb{N}
%EndExpansion
\right)  $ be an $m$-linear {Calder\'{o}n-Zygmund operator. Then, for any
numbers }$1\leq p_{1},\ldots,p_{m},p<\infty$ with $\frac{1}{p}=\frac{1}{p_{1}%
}+\cdots+\frac{1}{p_{m}}$, $T^{\left(  m\right)  }$ can be extended to a
bounded operator from $L_{p_{1}}\times \cdots \times L_{p_{m}}$ into $L_{p}$,
and bounded from $L_{1}\times \cdots \times L_{1}$ into $L_{\frac{1}{m},\infty}$.
\end{theorem}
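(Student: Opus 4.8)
The plan is to deduce the full range of bounds from two ingredients: the a priori boundedness that is part of the definition of an $m$-linear Calder\'{o}n--Zygmund operator---namely, boundedness from $L_{q_{1}}\times\cdots\times L_{q_{m}}$ into $L_{q}$ for one fixed tuple with $1\leq q_{i}\leq\infty$ and $\frac{1}{q}=\sum_{i=1}^{m}\frac{1}{q_{i}}$---together with the weak-type endpoint estimate $T^{(m)}\colon L_{1}\times\cdots\times L_{1}\to L_{1/m,\infty}$, which I would establish first. Granting both, the strong-type bounds $L_{p_{1}}\times\cdots\times L_{p_{m}}\to L_{p}$ in the stated range follow by multilinear Marcinkiewicz-type interpolation between these two data points, and the exponents with some $p_{i}$ large are then reached by duality in each variable separately, using that the transposes $T^{(m),\ast i}$ obtained by interchanging the roles of $x$ and $y_{i}$ are again $m$-linear Calder\'{o}n--Zygmund operators with kernels satisfying the same size and smoothness estimates.

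For the endpoint, fix $\lambda>0$, take $f_{i}\in L_{1}$, and by homogeneity normalize so that $\Vert f_{i}\Vert_{L_{1}}=1$ for every $i$. Apply the ordinary Calder\'{o}n--Zygmund decomposition to each $|f_{i}|$ at height $\lambda^{1/m}$, writing $f_{i}=g_{i}+b_{i}$ with $b_{i}=\sum_{j}b_{i,j}$, each $b_{i,j}$ supported on a Whitney cube $Q_{i,j}$, $\int b_{i,j}=0$, $\Vert g_{i}\Vert_{\infty}\lesssim\lambda^{1/m}$, $\Vert g_{i}\Vert_{L_{1}}\leq1$, and $\sum_{j}|Q_{i,j}|\lesssim\lambda^{-1/m}$. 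Expanding $\prod_{i=1}^{m}f_{i}=\prod_{i=1}^{m}(g_{i}+b_{i})$ produces $2^{m}$ terms. The purely good term $T^{(m)}(g_{1},\dots,g_{m})$ is controlled on the set where it exceeds $\lambda$ by Chebyshev's inequality combined with the a priori $L_{q}$ bound and the interpolation inequality $\Vert g_{i}\Vert_{q_{i}}\leq\Vert g_{i}\Vert_{\infty}^{1-1/q_{i}}\Vert g_{i}\Vert_{L_{1}}^{1/q_{i}}$, which produces exactly a bound of order $\lambda^{-1/m}$. For each of the remaining terms, which contains at least one bad factor, one first discards the exceptional set $E^{\ast}=\bigcup_{i,j}Q_{i,j}^{\ast}$ (a fixed dilate of the Whitney cubes), of measure $\lesssim\lambda^{-1/m}$; off $E^{\ast}$ one uses the kernel representation, subtracting the value of $K$ at the center of a cube $Q_{i_{0},j}$ attached to a bad factor so as to exploit the H\"{o}lder smoothness of $K$ against the cancellation $\int b_{i_{0},j}=0$, and then sums the resulting series over the Whitney cubes of the bad variables while bounding the good variables crudely by $\lambda^{1/m}$, again arriving at a contribution of order $\lambda^{-1/m}$ to $|\{|T^{(m)}(\overrightarrow{f})|>\lambda\}|$.

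The main obstacle is the bookkeeping in this endpoint step: one must dispose of all $2^{m}-1$ mixed good/bad terms uniformly, in each case balancing the mean-zero property of the bad pieces against the modulus of continuity of the multilinear kernel and then summing over the Whitney cubes of all $m$ separate decompositions, while having only the single a priori boundedness hypothesis available. By contrast, the interpolation and duality steps are routine once the weak endpoint is in hand, so the whole argument reduces essentially to the multilinear Calder\'{o}n--Zygmund decomposition together with the size and smoothness estimates for $K$.
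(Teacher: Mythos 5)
The paper offers no proof of this statement at all: Theorem \ref{TeoA} is quoted verbatim from Grafakos and Torres with a citation, so there is nothing in the text to compare your argument against. What you have written is instead a summary of the standard proof from those references, and on the whole it is a faithful sketch: (i) prove the weak $L_1\times\cdots\times L_1\to L_{1/m,\infty}$ endpoint via an $m$-fold Calder\'{o}n--Zygmund decomposition, each $f_i$ split at height $\lambda^{1/m}$; (ii) handle the all-good term through the a priori $L_{q_1}\times\cdots\times L_{q_m}\to L_q$ bound plus interpolation of the $g_i$; (iii) handle the $2^m-1$ terms with at least one bad factor by removing the exceptional set and using kernel regularity against the vanishing means; (iv) pass from the two known data points to the full open range by multilinear real interpolation plus duality through the transposes $T^{(m),*i}$, which are again Calder\'{o}n--Zygmund operators.

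Two points deserve emphasis. First, your step (iii) is described as if one always subtracts the kernel at the center of the one relevant cube; when two or more bad factors appear, the cancellation can be exploited in only one variable at a time, and in Grafakos--Torres this requires, for each $m$-tuple of Whitney cubes, selecting the distinguished cube (e.g.\ by side length) in which to use the mean-zero property, bounding the remaining bad factors by their $L_1$ mass, and then summing the resulting geometric series --- you correctly flag this ``bookkeeping'' as the crux, but the choice of distinguished index is the key device and is worth making explicit. Second, the duality step quietly requires that the full regularity estimates of $K$ (not merely the size estimate quoted in this paper) be symmetric under exchange of $x$ with each $y_i$, precisely so that each transpose is again in the Calder\'{o}n--Zygmund class; and the interpolation is in the $m$-dimensional simplex of reciprocal exponents, not merely along a segment, so one must first collect the weak endpoints of $T^{(m)}$ and of all the transposes before filling in the interior by multilinear Marcinkiewicz interpolation. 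Finally, note a small imprecision inherited from the paper's statement: the strong-type conclusion actually requires $p_i>1$ for every $i$; when some $p_i=1$ only the weak-type estimate survives, which is what your proof would in fact deliver.
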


On the other hand, the multilinear fractional type operators are natural
generalization of linear ones. Their earliest version was originated on the
work of Grafakos {\cite{Grafakos-sm} }in 1992, in which he studied the
multilinear maximal function and multilinear fractional integral defined by
\[
M_{\alpha}^{\left(  m\right)  }\left(  \overrightarrow{f}\right)  \left(
x\right)  =\sup_{t>0}\frac{1}{r^{n-\alpha}}\int \limits_{\left \vert
y\right \vert <r}\left \vert
%TCIMACRO{\dprod \limits_{i=1}^{m}}%
%BeginExpansion
{\displaystyle \prod \limits_{i=1}^{m}}
%EndExpansion
f_{i}\left(  x-\theta_{i}y\right)  \right \vert dy
\]
and
\[
I_{\alpha}^{\left(  m\right)  }\left(  \overrightarrow{f}\right)  \left(
x\right)  =\int \limits_{{\mathbb{R}^{n}}}\frac{1}{\left \vert y\right \vert
^{n-\alpha}}%
%TCIMACRO{\dprod \limits_{i=1}^{m}}%
%BeginExpansion
{\displaystyle \prod \limits_{i=1}^{m}}
%EndExpansion
f_{i}\left(  x-\theta_{i}y\right)  dy,
\]
where $\theta_{i}\left(  i=1,\ldots,m\right)  $ are fixed distinct are nonzero
real numbers and $0<\alpha<n$. We note that, if we simply take $m=1$ and
$\theta_{i}=1$, then $M_{\alpha}$ and $I_{\alpha}$ are just the operators
studied by Muckenhoupt and Wheeden in \cite{Muckenhoupt}. In this paper we
deal with another kind of multilinear operator which was defined by {Kenig and
Stein \cite{Kenig} }for $\overrightarrow{f}=\left(  f_{1},\ldots,f_{m}\right)
$, which is called multilinear fractional integral operator as follows%
\[
I_{\alpha}^{\left(  m\right)  }\left(  \overrightarrow{f}\right)  \left(
x\right)  =%
%TCIMACRO{\dint \limits_{\left(  {\mathbb{R}^{n}}\right)  ^{m}}}%
%BeginExpansion
{\displaystyle \int \limits_{\left(  {\mathbb{R}^{n}}\right)  ^{m}}}
%EndExpansion
\frac{1}{\left \vert \left(  x-y_{1},\ldots,x-y_{m}\right)  \right \vert
^{mn-\alpha}}\left \{
%TCIMACRO{\dprod \limits_{i=1}^{m}}%
%BeginExpansion
{\displaystyle \prod \limits_{i=1}^{m}}
%EndExpansion
f_{i}\left(  y_{i}\right)  \right \}  d\overrightarrow{y},
\]
whose kernel is%
\[
\left \vert K\left(  x,y_{1},\ldots,y_{m}\right)  \right \vert =\left \vert
\left(  x-y_{1},\ldots,x-y_{m}\right)  \right \vert ^{-mn+\alpha},\text{
\  \ }0<\alpha<mn,
\]
where $f_{1},\ldots,f_{m}:{\mathbb{R}^{n}\rightarrow \mathbb{R}}$ are
measurable and $\left \vert \left(  x-y_{1},\ldots,x-y_{m}\right)  \right \vert
=\sqrt{%
%TCIMACRO{\tsum \limits_{i=1}^{m}}%
%BeginExpansion
{\textstyle \sum \limits_{i=1}^{m}}
%EndExpansion
\left \vert x-y_{i}\right \vert ^{2}}$.

They {\cite{Kenig}} proved that $I_{\alpha}^{\left(  m\right)  }$ $\left(
m\in%
%TCIMACRO{\U{2115} }%
%BeginExpansion
\mathbb{N}
%EndExpansion
\right)  $ is of strong type $\left(  L_{p_{1}}\times L_{p_{2}}\times
\cdots \times L_{p_{m}},L_{q}\right)  $ and weak type $\left(  L_{p_{1}}\times
L_{p_{2}}\times \cdots \times L_{p_{m}},L_{q,\infty}\right)  $.{ }If we take
$m=1$, $I_{\alpha}^{\left(  m\right)  }$ $\left(  m\in%
%TCIMACRO{\U{2115} }%
%BeginExpansion
\mathbb{N}
%EndExpansion
\right)  $ is the classical fractional integral operator $I_{\alpha}$.
{Moreover, their's main result (Theorem 1 in \cite{Kenig})} is the
multi-version of well-known Hardy-Littlewood-Sobolev inequality. Later,
weighted inequalities for the multilinear fractional integral operators have
been established by Moen \cite{Moen} and Chen-Xue \cite{Chen}, respectively.
Yu and Tao \cite{Yu} have also obtained the boundedness of the operators
$I_{\alpha}^{\left(  m\right)  }$, $T^{\left(  m\right)  }$ and $M^{\left(
m\right)  }$ $\left(  m\in%
%TCIMACRO{\U{2115} }%
%BeginExpansion
\mathbb{N}
%EndExpansion
\right)  $ on the product generalized Morrey spaces, respectively. Indeed
their results {(Theorem 2.1., Theorem 3.1. and Theorem 4.1. in \cite{Yu}) }are
the extensions of Theorem 4.5., Corollary 4.6., Theorem 5.4. and Corollary
5.5. in \cite{Guliyev}.

Now, we will give some properties related to the space of functions of Bounded
Mean Oscillation, $BMO$, which play a great role in the proofs of our main
results, introduced by John and Nirenberg \cite{John-Nirenberg} in 1961. This
space has become extremely important in various areas of analysis including
harmonic analysis, PDEs and function theory. $BMO$-spaces are also of interest
since, in the scale of Lebesgue spaces, they may be considered and appropriate
substitute for $L_{\infty}$. Appropriate in the sense that are spaces
preserved by a wide class of important operators such as the Hardy-Littlewood
maximal function, the Hilbert transform and which can be used as an end point
in interpolating $L_{p}$ spaces.

Let us recall the definition of the space of $BMO({\mathbb{R}^{n}})$.

\begin{definition}
\label{definition}\cite{John-Nirenberg, Karaman} The space $BMO({\mathbb{R}%
^{n}})$ of functions of bounded mean oscillation consists of locally summable
functions with finite semi-norm%
\begin{equation}
\Vert b\Vert_{\ast}\equiv \Vert b\Vert_{BMO}=\sup_{x\in{\mathbb{R}^{n}}%
,r>0}\frac{1}{|B(x,r)|}%
%TCIMACRO{\dint \limits_{B(x,r)}}%
%BeginExpansion
{\displaystyle \int \limits_{B(x,r)}}
%EndExpansion
|b(y)-b_{B(x,r)}|dy<\infty, \label{0*}%
\end{equation}
where $b_{B(x,r)}$ is the mean value of the function $b$ on the ball $B(x,r)$.
The fact that precisely the mean value $b_{B(x,r)}$ figures in (\ref{0*}) is
inessential and one gets an equivalent seminorm if $b_{B(x,r)}$ is replaced by
an arbitrary constant $c:$%
\[
\Vert b\Vert_{\ast}\approx \sup_{r>0}\inf_{c\in%
%TCIMACRO{\U{2102} }%
%BeginExpansion
\mathbb{C}
%EndExpansion
}\frac{1}{\left \vert B\left(  x,r\right)  \right \vert }\int \limits_{B\left(
x,r\right)  }\left \vert b\left(  y\right)  -c\right \vert dy.
\]
Each bounded function $b\in BMO$. Moreover, $BMO$ contains unbounded
functions, in fact log$\left \vert x\right \vert $ belongs to $BMO$ but is not
bounded, so $L_{\infty}({\mathbb{R}^{n}})\subset BMO({\mathbb{R}^{n}})$.
\end{definition}

In 1961 John and Nirenberg \cite{John-Nirenberg} established the following
deep property of functions from $BMO$.

\begin{theorem}
\label{Theorem}\cite{John-Nirenberg} If $b\in BMO({\mathbb{R}^{n}})$ and
$B\left(  x,r\right)  $ is a ball, then%
\[
\left \vert \left \{  x\in B\left(  x,r\right)  \,:\,|b(x)-b_{B\left(
x,r\right)  }|>\xi \right \}  \right \vert \leq|B\left(  x,r\right)  |\exp \left(
-\frac{\xi}{C\Vert b\Vert_{\ast}}\right)  ,~~~\xi>0,
\]
where $C$ depends only on the dimension $n$.
\end{theorem}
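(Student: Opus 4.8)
The plan is to prove the stated distributional estimate by a Calder\'on--Zygmund stopping-time argument combined with an iteration that produces the exponential decay. Fix a ball $B_{0}=B(x,r)$ and, after rescaling, assume without loss of generality that $\Vert b\Vert_{\ast}\le 1$; it then suffices to show that for a suitable dimensional constant $C$ one has $|\{y\in B_{0}:|b(y)-b_{B_{0}}|>\xi\}|\le |B_{0}|\,e^{1-\xi/C}$ for all $\xi>0$, which implies the claimed form after adjusting constants. First I would replace balls by cubes (this changes only the dimensional constant) so that dyadic subdivision is available: work on the cube $Q_{0}$ with $|b-b_{Q_{0}}|$ and use that on a cube $Q$, $\frac{1}{|Q|}\int_{Q}|b-b_{Q}|\le c_{n}\Vert b\Vert_{\ast}\le c_{n}$.

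The core step is the following claim: there is a constant $A=A_{n}>1$ so that for every cube $Q$ and every $\lambda>c_{n}$,
\[
\bigl|\{y\in Q:|b(y)-b_{Q}|>\lambda\}\bigr|\le \frac{c_{n}}{\lambda}\,|Q|,
\]
and moreover, by running a Calder\'on--Zygmund decomposition of $|b-b_{Q}|$ on $Q$ at height $\lambda$, one obtains a pairwise disjoint family of dyadic subcubes $\{Q_{j}\}$ of $Q$ with
\[
\sum_{j}|Q_{j}|\le \frac{c_{n}}{\lambda}|Q|,\qquad |b_{Q_{j}}-b_{Q}|\le 2^{n}\lambda,\qquad |b(y)-b_{Q}|\le \lambda\ \text{ for a.e. } y\notin\bigcup_{j}Q_{j}.
\]
Define $F(\lambda)=\sup_{Q}\frac{1}{|Q|}|\{y\in Q:|b(y)-b_{Q}|>\lambda\}|$. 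Applying the decomposition and then splitting $|b-b_{Q}|\le |b-b_{Q_{j}}|+|b_{Q_{j}}-b_{Q}|$ on each $Q_{j}$ gives the subadditive-type recursion $F(\lambda+2^{n}\lambda_{0})\le \frac{c_{n}}{\lambda_{0}}F(\lambda)$ for $\lambda_{0}$ large enough. Iterating this recursion $k$ times with a fixed step size $\lambda_{0}$ chosen so that $c_{n}/\lambda_{0}=e^{-1}$ yields $F(k\,(1+2^{n})\lambda_{0})\le e^{-k}$, and filling in the values of $\xi$ between consecutive multiples (using that $F$ is nonincreasing and bounded by $1$) produces $F(\xi)\le e^{1-\xi/C}$ with $C=C_{n}$. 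Undoing the normalization $\Vert b\Vert_{\ast}\le 1$ replaces $\xi/C$ by $\xi/(C\Vert b\Vert_{\ast})$, which is exactly the assertion; passing back from cubes to balls only inflates $C$ by a dimensional factor.

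The main obstacle is setting up the stopping-time decomposition so that the ``good'' pointwise bound off the selected cubes and the ``geometric'' control $|b_{Q_{j}}-b_{Q}|\lesssim\lambda$ hold simultaneously: one must choose the stopping height $\lambda_{0}$ strictly larger than the mean-oscillation constant $c_{n}$ so that the set on which the maximal function of $b-b_{Q}$ exceeds $\lambda_{0}$ is a proper subset and the unselected part of $Q$ genuinely satisfies $|b-b_{Q}|\le\lambda_{0}$ a.e.\ (Lebesgue differentiation), while keeping the measure bound $\sum_j|Q_j|\le (c_n/\lambda_0)|Q|$ with a contraction factor $c_n/\lambda_0<1$. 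Once that single-step estimate is in place with a genuine contraction, the exponential decay is a mechanical iteration; the remaining bookkeeping (interpolating $F$ between the discrete levels $k(1+2^{n})\lambda_{0}$, and the ball-versus-cube constant) is routine and I would not carry it out in detail.
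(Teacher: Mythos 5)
Your proposal is the standard Calder\'on--Zygmund stopping-time proof of the John--Nirenberg inequality, and it is essentially correct: normalize $\Vert b\Vert_{\ast}\le 1$, pass to cubes, run the dyadic decomposition of $|b-b_Q|$ at a fixed height $\lambda_0$ exceeding the mean-oscillation constant to obtain stopping cubes $Q_j$ with $\sum_j|Q_j|\le (c_n/\lambda_0)|Q|$ and $|b_{Q_j}-b_Q|\le 2^n\lambda_0$, split $|b-b_Q|\le |b-b_{Q_j}|+|b_{Q_j}-b_Q|$ on each $Q_j$, and iterate the resulting recursion for the distribution function $F$. The paper itself does not prove this theorem --- it simply cites it from \cite{John-Nirenberg} --- so there is no in-paper argument to compare against; what you have written is the classical proof one finds in the original reference and in standard texts. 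Two small points of bookkeeping: you reuse the symbol $c_n$ for both the mean-oscillation bound $\frac{1}{|Q|}\int_Q|b-b_Q|\le c_n$ and the measure constant $\sum_j|Q_j|\le (c_n/\lambda)|Q|$ (these are the same up to a fixed factor, but the reader should see them identified explicitly), and the constant $A=A_n$ announced at the start of your claim is never used. Also the additive step in your recursion is $2^n\lambda_0$, so the iterated bound is more naturally written as $F(\lambda_0 + k\cdot 2^n\lambda_0)\le e^{-k}$; your $(1+2^n)$ factor accounts for the base level and is harmless. None of these affect the validity of the argument.
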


By Theorem \ref{Theorem}, we can get the following results.

\begin{corollary}
\cite{John-Nirenberg, Karaman} Let $b\in BMO({\mathbb{R}^{n}})$. Then, for any
$q>1$,%
\begin{equation}
\Vert b\Vert_{\ast}\thickapprox \sup_{x\in{\mathbb{R}^{n}},r>0}\left(  \frac
{1}{|B(x,r)|}%
%TCIMACRO{\dint \limits_{B(x,r)}}%
%BeginExpansion
{\displaystyle \int \limits_{B(x,r)}}
%EndExpansion
|b(y)-b_{B(x,r)}|^{p}dy\right)  ^{\frac{1}{p}} \label{5.1}%
\end{equation}
is valid.
\end{corollary}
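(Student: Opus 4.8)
The plan is to prove the two-sided estimate by establishing each inequality separately; one direction is elementary, while the other is exactly where the John--Nirenberg inequality (Theorem~\ref{Theorem}) enters. I will write $p$ for the exponent (denoted $q$ in the statement), with $p>1$ fixed throughout.

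First I would dispose of the direction in which $\Vert b\Vert_{\ast}$ is dominated by the right-hand side. Fix a ball $B=B(x,r)$. Since $p>1$ and $t\mapsto t^{p}$ is convex, Jensen's inequality applied to the probability measure $|B|^{-1}\,dy$ on $B$ gives
\[
\frac{1}{|B|}\int_{B}|b(y)-b_{B}|\,dy \leq \left( \frac{1}{|B|}\int_{B}|b(y)-b_{B}|^{p}\,dy\right) ^{1/p}.
\]
Taking the supremum over all balls yields $\Vert b\Vert_{\ast}\leq \sup_{x,r}\big(|B(x,r)|^{-1}\int_{B(x,r)}|b-b_{B}|^{p}\big)^{1/p}$ with constant $1$.

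Next, for the reverse inequality I would use the layer-cake (distribution function) representation together with Theorem~\ref{Theorem}. Fixing $B=B(x,r)$, write
\[
\int_{B}|b(y)-b_{B}|^{p}\,dy = p\int_{0}^{\infty}\xi^{p-1}\,\big|\{y\in B:\,|b(y)-b_{B}|>\xi\}\big|\,d\xi.
\]
By the John--Nirenberg estimate the inner measure is at most $|B|\exp\!\big(-\xi/(C\Vert b\Vert_{\ast})\big)$, so after the substitution $\xi=C\Vert b\Vert_{\ast}\,s$ the integral becomes $|B|\,(C\Vert b\Vert_{\ast})^{p}\,p\int_{0}^{\infty}s^{p-1}e^{-s}\,ds = |B|\,(C\Vert b\Vert_{\ast})^{p}\,\Gamma(p+1)$. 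Dividing by $|B|$ and taking $p$-th roots gives
\[
\left( \frac{1}{|B|}\int_{B}|b(y)-b_{B}|^{p}\,dy\right) ^{1/p} \leq C\,\Gamma(p+1)^{1/p}\,\Vert b\Vert_{\ast},
\]
and a supremum over balls finishes the argument. Combining the two displays proves the claimed equivalence with implicit constants depending only on $n$ and $p$.

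There is no serious obstacle here; the only points requiring a little care are (i) that the constant $C$ in Theorem~\ref{Theorem} depends only on the dimension $n$, so the factor $\Gamma(p+1)^{1/p}$ is the sole source of $p$-dependence and it is finite for every fixed $p>1$; and (ii) that the distribution-function identity is legitimate, i.e.\ that $b\in L_{p}^{loc}({\mathbb{R}^{n}})$ — which either follows a posteriori from the estimate itself, or may be secured by first truncating $b$ at height $N$, applying the argument uniformly in $N$, and passing to the limit by monotone convergence. If desired, one may also record that the same bound holds with $b_{B}$ replaced by an arbitrary constant $c$, by combining with the last display of Definition~\ref{definition}.
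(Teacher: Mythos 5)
Your proof is correct and is precisely the standard derivation that the paper invokes implicitly: the paper states this corollary without proof, simply remarking that it follows from Theorem~\ref{Theorem} (the John--Nirenberg inequality) and citing references. Your argument --- Jensen's inequality for the easy direction, then the layer-cake formula combined with the exponential decay of the distribution function from Theorem~\ref{Theorem} for the hard direction, with the resulting $\Gamma(p+1)^{1/p}$ constant --- is exactly the canonical route, and your remark about the statement's $q/p$ notational slip and the a priori $L^p_{\mathrm{loc}}$ issue (handled by truncation) shows appropriate care.
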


\begin{corollary}
\label{Corollary}\cite{John-Nirenberg, Karaman} Let $b\in BMO({\mathbb{R}^{n}%
})$. Then there is a constant $C>0$ such that
\begin{equation}
\left \vert b_{B(x,r)}-b_{B(x,t)}\right \vert \leq C\Vert b\Vert_{\ast}\left(
1+\ln \frac{t}{r}\right)  \text{ }~\text{for}~0<2r<t, \label{5.2}%
\end{equation}
and for any $q>1$, it is easy to see that%
\begin{equation}
\left \Vert b-\left(  b\right)  _{B}\right \Vert _{L_{q}\left(  B\right)  }\leq
Cr^{\frac{n}{q}}\Vert b\Vert_{\ast}\left(  1+\ln \frac{t}{r}\right)  .
\label{c}%
\end{equation}
where $C$ is independent of $b$, $x$, $r$ and $t$.
\end{corollary}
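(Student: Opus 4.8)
The plan is to derive \eqref{5.2} directly from the defining seminorm \eqref{0*} of $BMO$ by a dyadic chaining (telescoping) argument, and then to obtain \eqref{c} as an immediate consequence of \eqref{5.1} together with the trivial bound $1+\ln \frac{t}{r}\geq 1$, which holds since $t>2r$.

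For \eqref{5.2}, first I would isolate the elementary comparison step: whenever $B(x,\rho)\subset B(x,\sigma)$ with $\sigma \leq 2\rho$, the average of $b-b_{B(x,\sigma)}$ over $B(x,\rho)$ equals $b_{B(x,\rho)}-b_{B(x,\sigma)}$, so
\[
\left \vert b_{B(x,\rho)}-b_{B(x,\sigma)}\right \vert \leq \frac{1}{|B(x,\rho)|}\int_{B(x,\rho)}\left \vert b(y)-b_{B(x,\sigma)}\right \vert \,dy\leq \frac{|B(x,\sigma)|}{|B(x,\rho)|}\,\Vert b\Vert_{\ast}\leq 2^{n}\Vert b\Vert_{\ast},
\]
where the middle inequality enlarges the domain of integration to $B(x,\sigma)$ and uses \eqref{0*}, and the last uses $|B(x,\sigma)|/|B(x,\rho)|=(\sigma/\rho)^{n}\leq 2^{n}$. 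Then, given $0<2r<t$, I would choose the integer $k\geq 1$ with $2^{k}r\leq t<2^{k+1}r$ (so $k\leq \log_{2}(t/r)$) and telescope along the chain $B(x,r)\subset B(x,2r)\subset \cdots \subset B(x,2^{k}r)\subset B(x,t)$:
\[
b_{B(x,r)}-b_{B(x,t)}=\sum_{j=0}^{k-1}\bigl(b_{B(x,2^{j}r)}-b_{B(x,2^{j+1}r)}\bigr)+\bigl(b_{B(x,2^{k}r)}-b_{B(x,t)}\bigr).
\]
Applying the comparison step to each of the $k$ dyadic pairs (ratio $2$) and to the final pair (ratio $t/(2^{k}r)\in[1,2)$) gives $\left \vert b_{B(x,r)}-b_{B(x,t)}\right \vert \leq(k+1)\,2^{n}\Vert b\Vert_{\ast}\leq 2^{n+1}\Vert b\Vert_{\ast}\bigl(1+\log_{2}(t/r)\bigr)$; absorbing the change of logarithm base into the constant produces \eqref{5.2}.

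For \eqref{c}, I would invoke \eqref{5.1} with the given exponent $q>1$: it supplies $C=C(n,q)$ with $\int_{B}\left \vert b(y)-b_{B}\right \vert ^{q}dy\leq C\,|B|\,\Vert b\Vert_{\ast}^{q}$, hence
\[
\left \Vert b-(b)_{B}\right \Vert _{L_{q}(B)}\leq C^{1/q}|B|^{1/q}\Vert b\Vert_{\ast}=C^{1/q}v_{n}^{1/q}\,r^{\frac{n}{q}}\Vert b\Vert_{\ast}.
\]
Since $0<2r<t$ forces $1+\ln \frac{t}{r}>1$, multiplying the right side by that factor only enlarges it, which is exactly the (intentionally redundant, but convenient) form \eqref{c}.

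I do not expect a genuine obstacle here, as the statement is classical and the argument uses nothing beyond \eqref{0*} and \eqref{5.1}. The one point requiring a little care is the non-dyadic last link of the telescoping sum: one must check that the ratio $t/(2^{k}r)$ of the final pair still lies in $[1,2)$, so that the comparison step applies with the same constant $2^{n}$. The logarithmic factor in \eqref{c} is carried along purely for bookkeeping, so that both \eqref{5.2} and \eqref{c} can be quoted uniformly with the same pair $(r,t)$ in the later estimates for the multilinear commutators.
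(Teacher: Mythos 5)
Your proof is correct. The paper states Corollary \ref{Corollary} without proof, citing \cite{John-Nirenberg, Karaman} and remarking only that it follows ``by Theorem \ref{Theorem},'' so there is no in-paper argument to compare against; your dyadic chaining derivation of \eqref{5.2} and your reduction of \eqref{c} to \eqref{5.1} are exactly the standard proofs one would find in those references. One small point worth noting: your chaining argument for \eqref{5.2} uses only the definition \eqref{0*} and not the John--Nirenberg inequality itself, so that estimate is in fact more elementary than the paper's ``by Theorem \ref{Theorem}'' framing suggests (the deeper Theorem \ref{Theorem} is genuinely needed only for \eqref{5.1}, hence for \eqref{c}). Your reading of the factor $1+\ln\frac{t}{r}$ in \eqref{c} as deliberately redundant bookkeeping, valid since $0<2r<t$ forces this factor to exceed $1$, is also the correct interpretation and is consistent with how \eqref{c} is invoked in \eqref{1*}, \eqref{3*}, and the estimate \eqref{49} in the proof of Lemma \ref{Lemma}. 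A trivial slip: the telescoping gives $(k+1)2^{n}\Vert b\Vert_{\ast}\leq 2^{n}\bigl(1+\log_{2}\tfrac{t}{r}\bigr)\Vert b\Vert_{\ast}$, so your written factor $2^{n+1}$ is a needless over-estimate, though harmless since the constant is absorbed.
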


Now inspired by Definition \ref{definition}, we can give the definition of
multilinear $BMO$ ($=\mathfrak{BMO}$). Indeed in this paper we will consider a
multilinear version ($=$ multilinear $BMO$ or $\mathfrak{BMO}$) of the $BMO$.

\begin{definition}
We say that $\overrightarrow{b}=\left(  b_{1},\ldots,b_{m}\right)
\in \mathfrak{BMO}$ if%
\[
\left \Vert \overrightarrow{b}\right \Vert _{\mathfrak{BMO}}=\sup_{x\in
{\mathbb{R}^{n}},r>0}%
%TCIMACRO{\dprod \limits_{i=1}^{m}}%
%BeginExpansion
{\displaystyle \prod \limits_{i=1}^{m}}
%EndExpansion
\frac{1}{|B(x,r)|}%
%TCIMACRO{\dint \limits_{B(x,r)}}%
%BeginExpansion
{\displaystyle \int \limits_{B(x,r)}}
%EndExpansion
\left \vert b_{i}\left(  y_{i}\right)  -\left(  b_{i}\right)  _{B(x,r)}%
\right \vert dy_{i}<\infty,
\]
where
\[
\left(  b_{i}\right)  _{B(x,r)}=\frac{1}{|B(x,r)|}%
%TCIMACRO{\dint \limits_{B(x,r)}}%
%BeginExpansion
{\displaystyle \int \limits_{B(x,r)}}
%EndExpansion
b_{i}(y_{i})dy_{i}.
\]

\end{definition}

\begin{remark}
Notice that $\left(  BMO\right)  ^{m}$ is contained in $\mathfrak{BMO}$ and we
have
\[
\left \Vert \overrightarrow{b}\right \Vert _{\mathfrak{BMO}}\leq%
%TCIMACRO{\dprod \limits_{i=1}^{m}}%
%BeginExpansion
{\displaystyle \prod \limits_{i=1}^{m}}
%EndExpansion
\left \Vert b_{i}\right \Vert _{\ast},
\]
so
\[
\left(  BMO\right)  ^{m}\subset \mathfrak{BMO}%
\]
is valid.
\end{remark}

We now make some conventions. Throughout this paper, we use the symbol
$A\lesssim B$ to denote that there exists a positive consant $C$ such that
$A\leq CB$. If $A\lesssim B$ and $B\lesssim A$, we then write $A\approx B$ and
say that $A$ and $B$ are equivalent. For a fixed $p\in \left[  1,\infty \right)
$, $p^{\prime}$ denotes the dual or conjugate exponent of $p$, namely,
$p^{\prime}=\frac{p}{p-1}$ and we use the convention $1^{\prime}=\infty$ and
$\infty^{\prime}=1$.

\begin{remark}
Let $0<\alpha<mn$ and $1<p_{i}<\infty$ with $\frac{1}{p}=%
%TCIMACRO{\dsum \limits_{i=1}^{m}}%
%BeginExpansion
{\displaystyle \sum \limits_{i=1}^{m}}
%EndExpansion
\frac{1}{p_{i}}$, $\frac{1}{q_{i}}=\frac{1}{p_{i}}-\frac{\alpha}{mn}$,
$\frac{1}{q}=%
%TCIMACRO{\dsum \limits_{i=1}^{m}}%
%BeginExpansion
{\displaystyle \sum \limits_{i=1}^{m}}
%EndExpansion
\frac{1}{q_{i}}=\frac{1}{p}-\frac{\alpha}{n}$ and{ }$\overrightarrow
{b}=\left(  b_{1},\ldots,b_{m}\right)  \in \left(  BMO\right)  ^{m}$ for
$i=1,\ldots,m$. Then, from Corollary \ref{Corollary}, it is easy to see that%
\begin{equation}%
%TCIMACRO{\dprod \limits_{i=1}^{m}}%
%BeginExpansion
{\displaystyle \prod \limits_{i=1}^{m}}
%EndExpansion
\left \Vert b_{i}-\left(  b_{i}\right)  _{B}\right \Vert _{L_{q_{i}}\left(
B\right)  }\leq C%
%TCIMACRO{\dprod \limits_{i=1}^{m}}%
%BeginExpansion
{\displaystyle \prod \limits_{i=1}^{m}}
%EndExpansion
|B(x,r)|^{\frac{1}{q_{i}}}\left \Vert b_{i}\right \Vert _{\ast}\left(
1+\ln \frac{t}{r}\right)  , \label{1*}%
\end{equation}
and%
\begin{align}%
%TCIMACRO{\dprod \limits_{i=1}^{m}}%
%BeginExpansion
{\displaystyle \prod \limits_{i=1}^{m}}
%EndExpansion
\left \Vert b_{i}-\left(  b_{i}\right)  _{B}\right \Vert _{L_{q_{i}}\left(
2B\right)  }  &  \leq%
%TCIMACRO{\dprod \limits_{i=1}^{m}}%
%BeginExpansion
{\displaystyle \prod \limits_{i=1}^{m}}
%EndExpansion
\left(  \left \Vert b_{i}-\left(  b_{i}\right)  _{2B}\right \Vert _{L_{q_{i}%
}\left(  2B\right)  }+\left \Vert \left(  b_{i}\right)  _{B}-\left(
b_{i}\right)  _{2B}\right \Vert _{L_{q_{i}}\left(  2B\right)  }\right)
\nonumber \\
&  \lesssim%
%TCIMACRO{\dprod \limits_{i=1}^{m}}%
%BeginExpansion
{\displaystyle \prod \limits_{i=1}^{m}}
%EndExpansion
|B(x,r)|^{\frac{1}{q_{i}}}\left \Vert b_{i}\right \Vert _{\ast}\left(
1+\ln \frac{t}{r}\right)  . \label{3*}%
\end{align}

\end{remark}

On the other hand, Xu \cite{Xu} has established the boundedness of the
commutators generated by $m$-linear {Calder\'{o}n-Zygmund }singular integrals
and $RBMO$ functions with nonhomogeneity on the{\ product of Lebesgue space}.
Inspired by \cite{Chen1, Chen, Grafakos1, Grafakos3*, Si, Xu}, commutators
$T_{\overrightarrow{b}}^{\left(  m\right)  }$ generated by $m$-linear
{Calder\'{o}n-Zygmund operators }$T^{\left(  m\right)  }$ {and bounded mean
oscillation functions }$\overrightarrow{b}=\left(  b_{1},\ldots,b_{m}\right)
${\ is given by}%
\[
T_{\overrightarrow{b}}^{\left(  m\right)  }\left(  \overrightarrow{f}\right)
\left(  x\right)  =%
%TCIMACRO{\dint \limits_{\left(  {\mathbb{R}^{n}}\right)  ^{m}}}%
%BeginExpansion
{\displaystyle \int \limits_{\left(  {\mathbb{R}^{n}}\right)  ^{m}}}
%EndExpansion
K\left(  x,y_{1},\ldots,y_{m}\right)  \left[
%TCIMACRO{\dprod \limits_{i=1}^{m}}%
%BeginExpansion
{\displaystyle \prod \limits_{i=1}^{m}}
%EndExpansion
\left[  b_{i}\left(  x\right)  -b_{i}\left(  y_{i}\right)  \right]
f_{i}\left(  y_{i}\right)  \right]  d\overrightarrow{y},
\]
where $K\left(  x,y_{1},\ldots,y_{m}\right)  $ is a $m$-linear
{Calder\'{o}n-Zygmund kernel, }$b_{i}\in \left(  BMO\right)  ^{i}%
({\mathbb{R}^{n}})$ for $i=1,\ldots,m$. Note that $T_{b}$ is the special case
of $T_{\overrightarrow{b}}^{\left(  m\right)  }$ with taking $m=1$. Similarly,
let $b_{i}\left(  i=1,\ldots,m\right)  $ be a locally integrable functions on
${\mathbb{R}^{n}}$, then the commutators generated by $m$-linear fractional
integral operators and $\overrightarrow{b}=\left(  b_{1},\ldots,b_{m}\right)
$ is given by%
\[
I_{\alpha,\overrightarrow{b}}^{\left(  m\right)  }\left(  \overrightarrow
{f}\right)  \left(  x\right)  =%
%TCIMACRO{\dint \limits_{\left(  {\mathbb{R}^{n}}\right)  ^{m}}}%
%BeginExpansion
{\displaystyle \int \limits_{\left(  {\mathbb{R}^{n}}\right)  ^{m}}}
%EndExpansion
\frac{1}{\left \vert \left(  x-y_{1},\ldots,x-y_{m}\right)  \right \vert
^{mn-\alpha}}\left[
%TCIMACRO{\dprod \limits_{i=1}^{m}}%
%BeginExpansion
{\displaystyle \prod \limits_{i=1}^{m}}
%EndExpansion
\left[  b_{i}\left(  x\right)  -b_{i}\left(  y_{i}\right)  \right]
f_{i}\left(  y_{i}\right)  \right]  d\overrightarrow{y},
\]
where $0<\alpha<mn$, and $f_{i}$ $\left(  i=1,\ldots,m\right)  $ are suitable functions.

The commutators of a class of multi-sublinear maximal operators corresponding
to $T_{\overrightarrow{b}}^{\left(  m\right)  }$ and $I_{\alpha
,\overrightarrow{b}}^{\left(  m\right)  }$ $\left(  m\in%
%TCIMACRO{\U{2115} }%
%BeginExpansion
\mathbb{N}
%EndExpansion
\right)  $ above are, respectively, defined by%
\[
M_{\overrightarrow{b}}^{\left(  m\right)  }\left(  \overrightarrow{f}\right)
\left(  x\right)  =\sup_{t>0}\left[
%TCIMACRO{\dprod \limits_{i=1}^{m}}%
%BeginExpansion
{\displaystyle \prod \limits_{i=1}^{m}}
%EndExpansion
\frac{1}{\left \vert B\left(  x,t\right)  \right \vert }\int \limits_{B\left(
x,t\right)  }\left[  \left \vert b_{i}\left(  x\right)  -b_{i}\left(
y_{i}\right)  \right \vert \right]  \left \vert f_{i}\left(  y_{i}\right)
\right \vert \right]  d\overrightarrow{y},
\]
and
\[
M_{\alpha,\overrightarrow{b}}^{\left(  m\right)  }\left(  \overrightarrow
{f}\right)  \left(  x\right)  =\sup_{t>0}\left \vert B\left(  x,t\right)
\right \vert ^{\frac{\alpha}{n}}\left[
%TCIMACRO{\dprod \limits_{i=1}^{m}}%
%BeginExpansion
{\displaystyle \prod \limits_{i=1}^{m}}
%EndExpansion
\frac{1}{\left \vert B\left(  x,t\right)  \right \vert }\int \limits_{B\left(
x,t\right)  }\left[  \left \vert b_{i}\left(  x\right)  -b_{i}\left(
y_{i}\right)  \right \vert \right]  \left \vert f_{i}\left(  y_{i}\right)
\right \vert \right]  d\overrightarrow{y},\text{ \  \ }0\leq \alpha<mn.
\]

The following result is known.

\begin{lemma}
\label{Lemma0}\cite{Si} (Strong bounds of $I_{\overrightarrow{b},\alpha
}^{\left(  m\right)  }$) Let $0<\alpha_{i}<n$, $1<p_{1},\ldots,p_{m}<\infty$,
$\frac{1}{p}=%
%TCIMACRO{\dsum \limits_{i=1}^{m}}%
%BeginExpansion
{\displaystyle \sum \limits_{i=1}^{m}}
%EndExpansion
\frac{1}{p_{i}}$, $\alpha=%
%TCIMACRO{\dsum \limits_{i=1}^{m}}%
%BeginExpansion
{\displaystyle \sum \limits_{i=1}^{m}}
%EndExpansion
\alpha_{i}$ and $\frac{1}{q}=\frac{1}{p}-\frac{\alpha}{n}$. Then there is
$C>0$ independent of $\overrightarrow{f}$ and $\overrightarrow{b}$ such that%
\[
\left \Vert I_{\overrightarrow{b},\alpha}^{\left(  m\right)  }\left(
\overrightarrow{f}\right)  \right \Vert _{L_{q}\left(  {\mathbb{R}^{n}}\right)
}\leq C%
%TCIMACRO{\dprod \limits_{i=1}^{m}}%
%BeginExpansion
{\displaystyle \prod \limits_{i=1}^{m}}
%EndExpansion
\Vert b_{i}\Vert_{\ast}\left \Vert f_{i}\right \Vert _{L_{p_{i}}\left(
{\mathbb{R}^{n}}\right)  }.
\]

\end{lemma}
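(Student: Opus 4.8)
I would deduce the commutator bound from the (known, unweighted) strong-type bound for the multilinear fractional integral $I_\alpha^{(m)}$ itself, by means of the classical conjugation / Cauchy-integral device that reduces a $BMO$ commutator to the operator it is built from. For $z=(z_1,\dots,z_m)\in\mathbb{C}^m$ set
\[
I_\alpha^{(m),z}(\overrightarrow{f})(x)=e^{\sum_{i=1}^m z_i b_i(x)}\,I_\alpha^{(m)}\!\left(e^{-z_1 b_1}f_1,\dots,e^{-z_m b_m}f_m\right)(x).
\]
Differentiating termwise, the mixed partial $\partial_{z_1}\cdots\partial_{z_m}I_\alpha^{(m),z}(\overrightarrow{f})\big|_{z=0}$ equals, up to a sign, the iterated commutator $I_{\overrightarrow{b},\alpha}^{(m)}(\overrightarrow{f})$, so by the Cauchy integral formula applied in each variable,
\[
I_{\overrightarrow{b},\alpha}^{(m)}(\overrightarrow{f})(x)=\frac{1}{(2\pi i)^m}\oint_{|z_1|=\varepsilon_1}\!\!\cdots\!\!\oint_{|z_m|=\varepsilon_m}\frac{I_\alpha^{(m),z}(\overrightarrow{f})(x)}{z_1^{2}\cdots z_m^{2}}\,dz_m\cdots dz_1 .
\]
Taking $L_q$ norms, applying Minkowski's integral inequality successively over the $m$ circles, and eventually choosing $\varepsilon_i\sim 1/\|b_i\|_\ast$, the problem reduces to bounding $\big\|I_\alpha^{(m),z}(\overrightarrow{f})\big\|_{L_q}$ uniformly for $z$ on those circles.

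For this uniform bound one invokes weighted theory. Since $\big|e^{z_i b_i(x)-z_i b_i(y_i)}\big|\le e^{\varepsilon_i|b_i(x)-b_i(y_i)|}$, the operator $I_\alpha^{(m),z}$ is controlled pointwise by $I_\alpha^{(m)}$ acting on the $|f_i|$ with exponential weights $w_i=e^{\varepsilon_i(\operatorname{Re}e^{i\theta_i})b_i}$ inserted. The John--Nirenberg inequality (Theorem~\ref{Theorem}) shows that, for the $\varepsilon_i$ small, the $m$-tuple $(w_1,\dots,w_m)$ belongs to the multilinear Muckenhoupt class $A_{\overrightarrow{p},q}$ with characteristic bounded solely in terms of the $\|b_i\|_\ast$ and the exponents. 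The weighted $L_{p_1}\times\cdots\times L_{p_m}\to L_q$ estimate for $I_\alpha^{(m)}$ established by Moen~\cite{Moen} and Chen--Xue~\cite{Chen} then yields $\big\|I_\alpha^{(m),z}(\overrightarrow{f})\big\|_{L_q}\lesssim\prod_{i=1}^m\|f_i\|_{L_{p_i}}$ uniformly in the angles $\theta_i$; substituting this into the contour representation and taking $\varepsilon_i\sim 1/\|b_i\|_\ast$ reproduces exactly the factor $\prod_i\|b_i\|_\ast$, which is the assertion.

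A weight-free alternative is to prove the pointwise sharp-maximal estimate $M^{\#}\big(I_{\overrightarrow{b},\alpha}^{(m)}(\overrightarrow{f})\big)(x)\lesssim\prod_{i=1}^m\|b_i\|_\ast\,\Phi(x)$, where $M^{\#}$ is the Fefferman--Stein sharp maximal function and $\Phi$ is a finite sum of multilinear fractional maximal functions with $L_r$ inner averages ($1<r<\min_{1\le i\le m}p_i$) applied to $\overrightarrow{f}$ and to $I_\alpha^{(m)}(\overrightarrow{f})$; this estimate is obtained by fixing a ball $B$, writing $\prod_i(b_i(x)-b_i(y_i))=\sum_{S\subseteq\{1,\dots,m\}}\prod_{i\in S}(b_i(x)-(b_i)_B)\prod_{i\notin S}((b_i)_B-b_i(y_i))$, and treating each of the $2^m$ terms by a generalized H\"older inequality together with \eqref{5.1}. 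One then applies $\|g\|_{L_q}\lesssim\|M^{\#}g\|_{L_q}$ (legitimate after the customary truncation that makes the left-hand side finite a priori), the Hardy--Littlewood--Sobolev bound for $I_\alpha^{(m)}$ (see \cite{Kenig}, and \cite{Muckenhoupt} for $m=1$), and the mapping properties of the multi-fractional maximal operator.

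The main difficulty in either route is the multilinear bookkeeping rather than any genuinely new idea. In the conjugation argument one must verify that the \emph{entire} $m$-tuple of exponential weights sits in the correct multilinear Muckenhoupt class with a quantitative bound that remains controlled when recombined across the $m$ contour radii (the exponent normalization $w_i=e^{z_ib_i/p_i}$ and its effect on the $A_{\overrightarrow{p},q}$ condition is exactly the delicate point); in the sharp-maximal argument the awkward terms are those retaining an inner value $b_i(y_i)$, for which one genuinely needs the exponential integrability of Theorem~\ref{Theorem} rather than a mere $L_s$ bound, together with a correct distribution of the Sobolev relation $\frac1q=\frac1p-\frac{\alpha}{n}$ among the $m$ factors. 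Once these points are settled the remainder is the standard iteration of the linear fractional-commutator argument; a complete proof along these lines is carried out in \cite{Si}.
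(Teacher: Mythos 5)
The paper does not prove Lemma \ref{Lemma0}: it is quoted as a known result from \cite{Si} (Si and Lu), just as Theorem \ref{TeoA} is quoted from Grafakos--Torres, so there is no internal argument to compare your sketch against; and your own deferral to \cite{Si} at the end places you in exactly the position the paper adopts.

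As a programme, both of your routes are sound, and the second (Fefferman--Stein sharp-maximal) route is the one actually carried out in \cite{Si}: decompose $\prod_i(b_i(x)-b_i(y_i))$ over subsets of $\{1,\dots,m\}$, control each term via H\"older and John--Nirenberg against multilinear fractional maximal functions, and close with $\|g\|_{L_q}\lesssim\|M^{\#}g\|_{L_q}$ together with the Kenig--Stein bound for $I_\alpha^{(m)}$. One imprecision worth flagging: for the iterated ($m\ge 2$) commutator the right-hand side of the sharp-maximal pointwise estimate normally contains not only maximal quantities built from $\overrightarrow{f}$ and from $I_\alpha^{(m)}(\overrightarrow{f})$, but also the lower-order commutators with only a proper subset of the $b_i$ inserted, which are then controlled by an induction on the number of symbols; your description of $\Phi$ elides that inductive layer, which is where much of the bookkeeping in \cite{Si} lives. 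The conjugation/Cauchy-integral route in your first paragraph is also workable: the identification of $\partial_{z_1}\cdots\partial_{z_m}I_\alpha^{(m),z}\big|_{z=0}$ with $I_{\overrightarrow{b},\alpha}^{(m)}$ is correct, and with $w_i=e^{\operatorname{Re}(z_i)b_i}$ the product $\prod_i w_i$ is precisely the outer conjugating factor, so Moen's weighted estimate applies once one has verified the multilinear $A_{\overrightarrow{p},q}$ condition uniformly on contours of radius $\varepsilon_i\sim\|b_i\|_\ast^{-1}$ --- the point you correctly single out as delicate. In short, the sketch is correct as an outline but, like the paper itself, ultimately rests on \cite{Si} for the detailed proof.
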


Using the idea in the proof of Lemma 3.2 in \cite{Gurbuz4}, we can obtain the
following Corollary \ref{Corollary0*}:

\begin{corollary}
\label{Corollary0*}(Strong bounds of $M_{\alpha,\overrightarrow{b}}^{\left(
m\right)  }$) Under the assumptions of Lemma \ref{Lemma0}, the operator
$M_{\alpha,\overrightarrow{b}}^{\left(  m\right)  }$ is bounded from
$L_{p_{1}}({\mathbb{R}^{n}})\times \cdots L_{p_{m}}({\mathbb{R}^{n}})$ to
$L_{q}({\mathbb{R}^{n}})$. Moreover, we have%
\[
\left \Vert M_{\alpha,\overrightarrow{b}}^{\left(  m\right)  }\left(
\overrightarrow{f}\right)  \right \Vert _{L_{q}\left(  {\mathbb{R}^{n}}\right)
}\leq C%
%TCIMACRO{\dprod \limits_{i=1}^{m}}%
%BeginExpansion
{\displaystyle \prod \limits_{i=1}^{m}}
%EndExpansion
\Vert b_{i}\Vert_{\ast}\left \Vert f_{i}\right \Vert _{L_{p_{i}}\left(
{\mathbb{R}^{n}}\right)  }.
\]

\end{corollary}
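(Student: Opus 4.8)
The plan is to deduce Corollary~\ref{Corollary0*} from Lemma~\ref{Lemma0} by a pointwise domination argument, exactly following the strategy of Lemma 3.2 in \cite{Gurbuz4}. First I would observe that the essential point is the pointwise comparison
\[
M_{\alpha,\overrightarrow{b}}^{\left(  m\right)  }\left(  \overrightarrow
{f}\right)  \left(  x\right)  \lesssim I_{\overrightarrow{b},\alpha}^{\left(
m\right)  }\left(  \left \vert \overrightarrow{f}\right \vert \right)  \left(
x\right)  ,
\]
where $\left \vert \overrightarrow{f}\right \vert =\left(  \left \vert
f_{1}\right \vert ,\ldots,\left \vert f_{m}\right \vert \right)  $, together with
the fact that replacing each $f_{i}$ by $\left \vert f_{i}\right \vert $ does not
change the $L_{p_{i}}$ norm. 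To get this comparison, fix $x$ and fix $t>0$.
Inside the ball $B(x,t)$ one has, for each coordinate $i$, the crude bound
$\left \vert \left(  x-y_{1},\ldots,x-y_{m}\right)  \right \vert \leq \sqrt{m}\,t$
whenever all $y_{i}\in B(x,t)$, hence
$\left \vert \left(  x-y_{1},\ldots,x-y_{m}\right)  \right \vert ^{-mn+\alpha
}\gtrsim t^{-mn+\alpha}$ (using $0<\alpha<mn$ so the exponent is negative).
Therefore
\[
\left \vert B(x,t)\right \vert ^{\frac{\alpha}{n}}
{\displaystyle \prod \limits_{i=1}^{m}}
\frac{1}{\left \vert B(x,t)\right \vert }\int \limits_{B(x,t)}\left \vert
b_{i}(x)-b_{i}(y_{i})\right \vert \left \vert f_{i}(y_{i})\right \vert d y_{i}
\approx t^{\alpha-mn}\int \limits_{\left(  B(x,t)\right)  ^{m}}
{\displaystyle \prod \limits_{i=1}^{m}}
\left \vert b_{i}(x)-b_{i}(y_{i})\right \vert \left \vert f_{i}(y_{i})\right \vert
d\overrightarrow{y},
\]
and the right-hand side is $\lesssim \int_{(B(x,t))^{m}}\left \vert \left(
x-y_{1},\ldots,x-y_{m}\right)  \right \vert ^{-mn+\alpha}\prod_{i}\left \vert
b_{i}(x)-b_{i}(y_{i})\right \vert \left \vert f_{i}(y_{i})\right \vert
d\overrightarrow{y}\leq I_{\overrightarrow{b},\alpha}^{\left(  m\right)
}\left(  \left \vert \overrightarrow{f}\right \vert \right)  (x)$, since the
integrand on $(B(x,t))^{m}$ is nonnegative and $(B(x,t))^{m}\subset \left(
{\mathbb{R}^{n}}\right)  ^{m}$. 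Taking the supremum over $t>0$ yields the
claimed pointwise bound.

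Once the pointwise domination is in hand, I would finish by invoking
Lemma~\ref{Lemma0}: with the stated exponent relations $0<\alpha_{i}<n$,
$\alpha=\sum_{i}\alpha_{i}$, $\frac1p=\sum_{i}\frac1{p_{i}}$ and
$\frac1q=\frac1p-\frac\alpha n$, we obtain
\[
\left \Vert M_{\alpha,\overrightarrow{b}}^{\left(  m\right)  }\left(
\overrightarrow{f}\right)  \right \Vert _{L_{q}}\lesssim \left \Vert
I_{\overrightarrow{b},\alpha}^{\left(  m\right)  }\left(  \left \vert
\overrightarrow{f}\right \vert \right)  \right \Vert _{L_{q}}\leq C
{\displaystyle \prod \limits_{i=1}^{m}}
\Vert b_{i}\Vert_{\ast}\left \Vert \,\left \vert f_{i}\right \vert \,\right \Vert
_{L_{p_{i}}}=C
{\displaystyle \prod \limits_{i=1}^{m}}
\Vert b_{i}\Vert_{\ast}\left \Vert f_{i}\right \Vert _{L_{p_{i}}},
\]
which is precisely the assertion. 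I would also note a minor bookkeeping point:
the corollary as stated keeps the hypotheses of Lemma~\ref{Lemma0} verbatim,
so the parameter $\alpha$ appearing in $M_{\alpha,\overrightarrow{b}}^{\left(
m\right)  }$ is understood to be $\sum_{i=1}^{m}\alpha_{i}$ with each
$0<\alpha_{i}<n$; since only $0<\alpha<mn$ is used in the pointwise estimate,
no extra care is needed there.

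The only genuine obstacle is making the first step fully rigorous rather than
merely heuristic: one must be careful that the constants in
$t^{\alpha-mn}\approx \left \vert B(x,t)\right \vert ^{\frac{\alpha}{n}}\left \vert
B(x,t)\right \vert ^{-m}$ and in the inequality $\left \vert \left(  x-y_{1}
,\ldots,x-y_{m}\right)  \right \vert \leq \sqrt m\,t$ are dimensional constants
independent of $x$, $t$, $\overrightarrow{f}$ and $\overrightarrow{b}$, so
that the implied constant in the final bound depends only on $m$, $n$,
$\alpha$ and the $p_{i}$'s — exactly as required for a norm inequality.
Everything else is routine and the argument is essentially identical, modulo
the insertion of the oscillation factors $\left \vert b_{i}(x)-b_{i}
(y_{i})\right \vert $, to the standard proof that the fractional maximal
operator is pointwise dominated by the fractional integral operator.
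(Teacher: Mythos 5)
Your overall plan (pointwise domination by a fractional integral, then apply Lemma~\ref{Lemma0}) is exactly the paper's strategy, and the geometric estimates $\left\vert\left(x-y_{1},\ldots,x-y_{m}\right)\right\vert\lesssim t$ on $\left(B(x,t)\right)^{m}$ and $\left\vert B(x,t)\right\vert^{\frac{\alpha}{n}-m}\approx t^{\alpha-mn}$ are correct. However, there is a real gap in the last step of your pointwise inequality, where you claim
\[
\int_{\left(B(x,t)\right)^{m}}\left\vert\left(x-y_{1},\ldots,x-y_{m}\right)\right\vert^{-mn+\alpha}\prod_{i=1}^{m}\left\vert b_{i}(x)-b_{i}(y_{i})\right\vert\,\left\vert f_{i}(y_{i})\right\vert\,d\overrightarrow{y}\;\leq\;I_{\overrightarrow{b},\alpha}^{\left(m\right)}\bigl(\left\vert\overrightarrow{f}\right\vert\bigr)(x).
\]
The left-hand side has the modulus $\left\vert b_{i}(x)-b_{i}(y_{i})\right\vert$ inside the integrand, because the maximal commutator $M_{\alpha,\overrightarrow{b}}^{\left(m\right)}$ is defined that way. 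The right-hand side, by the paper's definition of $I_{\alpha,\overrightarrow{b}}^{\left(m\right)}$, has the \emph{signed} differences $b_{i}(x)-b_{i}(y_{i})$ and hence is not a majorant; for generic $b_{i}$ the right-hand side may even be negative. What your computation actually shows is domination by the operator the paper calls $\widetilde{I}_{\overrightarrow{b},\alpha}^{\left(m\right)}$, in which absolute values are inserted around each factor $b_{i}(x)-b_{i}(y_{i})$ as well. This is a genuinely different operator from $I_{\overrightarrow{b},\alpha}^{\left(m\right)}$ applied to $\left\vert\overrightarrow{f}\right\vert$, so you cannot apply Lemma~\ref{Lemma0} to it directly.

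To close the gap, you must additionally assert (and, strictly, justify) that the strong-type bound of Lemma~\ref{Lemma0} also holds for $\widetilde{I}_{\overrightarrow{b},\alpha}^{\left(m\right)}$; the paper does exactly this with the remark ``It is easy to see that Lemma~\ref{Lemma0} also holds for $\widetilde{I}_{\overrightarrow{b},\alpha}^{\left(m\right)}$.'' That remark is plausible because the usual proofs of commutator bounds proceed by estimating $\left\vert b_{i}(x)-b_{i}(y_{i})\right\vert$ anyway, but it is not a formal consequence of the statement of Lemma~\ref{Lemma0} alone. Once you replace $I_{\overrightarrow{b},\alpha}^{\left(m\right)}\bigl(\left\vert\overrightarrow{f}\right\vert\bigr)$ by $\widetilde{I}_{\overrightarrow{b},\alpha}^{\left(m\right)}\bigl(\left\vert\overrightarrow{f}\right\vert\bigr)$ and add that observation, your argument is complete and essentially identical to the paper's proof, including the supremum-over-$t$ step and the absorption of dimensional constants.
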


\begin{proof}
Set%
\[
\widetilde{I}_{\overrightarrow{b},\alpha}^{\left(  m\right)  }\left(
\left \vert f\right \vert \right)  (x)=%
%TCIMACRO{\dint \limits_{\left(  {\mathbb{R}^{n}}\right)  ^{m}}}%
%BeginExpansion
{\displaystyle \int \limits_{\left(  {\mathbb{R}^{n}}\right)  ^{m}}}
%EndExpansion
\frac{1}{\left \vert \left(  x-y_{1},\ldots,x-y_{m}\right)  \right \vert
^{mn-\alpha}}\left[
%TCIMACRO{\dprod \limits_{i=1}^{m}}%
%BeginExpansion
{\displaystyle \prod \limits_{i=1}^{m}}
%EndExpansion
\left[  \left \vert b_{i}\left(  x\right)  -b_{i}\left(  y_{i}\right)
\right \vert \right]  \left \vert f_{i}\left(  y_{i}\right)  \right \vert
\right]  d\overrightarrow{y}\qquad0<\alpha<mn.
\]
It is easy to see that Lemma \ref{Lemma0} is also hold for $\widetilde
{I}_{\overrightarrow{b},\alpha}^{\left(  m\right)  }$. On the other hand, for
any $t>0$, we have
\begin{align*}
\widetilde{I}_{\overrightarrow{b},\alpha}^{\left(  m\right)  }\left(
\left \vert f\right \vert \right)  (x)  &  \geq%
%TCIMACRO{\dint \limits_{\left(  B\left(  x,t\right)  \right)  ^{m}}}%
%BeginExpansion
{\displaystyle \int \limits_{\left(  B\left(  x,t\right)  \right)  ^{m}}}
%EndExpansion
\frac{1}{\left \vert \left(  x-y_{1},\ldots,x-y_{m}\right)  \right \vert
^{mn-\alpha}}\left[
%TCIMACRO{\dprod \limits_{i=1}^{m}}%
%BeginExpansion
{\displaystyle \prod \limits_{i=1}^{m}}
%EndExpansion
\left[  \left \vert b_{i}\left(  x\right)  -b_{i}\left(  y_{i}\right)
\right \vert \right]  \left \vert f_{i}\left(  y_{i}\right)  \right \vert
\right]  d\overrightarrow{y}\\
&  \geq \frac{1}{t^{mn-\alpha}}%
%TCIMACRO{\dint \limits_{B\left(  x,t\right)  }}%
%BeginExpansion
{\displaystyle \int \limits_{B\left(  x,t\right)  }}
%EndExpansion
\left[
%TCIMACRO{\dprod \limits_{i=1}^{m}}%
%BeginExpansion
{\displaystyle \prod \limits_{i=1}^{m}}
%EndExpansion
\left[  \left \vert b_{i}\left(  x\right)  -b_{i}\left(  y_{i}\right)
\right \vert \right]  \left \vert f_{i}\left(  y_{i}\right)  \right \vert
\right]  d\overrightarrow{y}.
\end{align*}
Taking supremum over $t>0$ in the above inequality, we get%
\begin{equation}
M_{\alpha,\overrightarrow{b}}^{\left(  m\right)  }\left(  \overrightarrow
{f}\right)  \left(  x\right)  \leq C_{n,\alpha}^{-1}\widetilde{I}%
_{\overrightarrow{b},\alpha}^{\left(  m\right)  }\left(  \left \vert
f\right \vert \right)  (x)\qquad C_{n,\alpha}=\left \vert B\left(  0,1\right)
\right \vert ^{\frac{mn-\alpha}{n}}. \label{4}%
\end{equation}

\end{proof}

As a simple corollary of Lemma \ref{Lemma0} and Corollary \ref{Corollary0*},
we can obtain the following result.

\begin{corollary}
(Strong bounds of $T_{\overrightarrow{b}}^{\left(  m\right)  }$ and
$M_{\overrightarrow{b}}^{\left(  m\right)  }$) Let $1<p_{1},\ldots
,p_{m}<\infty$ and $0<p<\infty$ with $\frac{1}{p}=%
%TCIMACRO{\dsum \limits_{i=1}^{m}}%
%BeginExpansion
{\displaystyle \sum \limits_{i=1}^{m}}
%EndExpansion
\frac{1}{p_{i}}$. Then there is $C>0$ independent of $\overrightarrow{f}$ and
$\overrightarrow{b}$ such that%
\[
\left \Vert T_{\overrightarrow{b}}^{\left(  m\right)  }\left(  \overrightarrow
{f}\right)  \right \Vert _{L_{p}\left(  {\mathbb{R}^{n}}\right)  }\leq C%
%TCIMACRO{\dprod \limits_{i=1}^{m}}%
%BeginExpansion
{\displaystyle \prod \limits_{i=1}^{m}}
%EndExpansion
\Vert b_{i}\Vert_{\ast}\left \Vert f_{i}\right \Vert _{L_{p_{i}}\left(
{\mathbb{R}^{n}}\right)  },
\]%
\[
\left \Vert M_{\overrightarrow{b}}^{\left(  m\right)  }\left(  \overrightarrow
{f}\right)  \right \Vert _{L_{p}\left(  {\mathbb{R}^{n}}\right)  }\leq C%
%TCIMACRO{\dprod \limits_{i=1}^{m}}%
%BeginExpansion
{\displaystyle \prod \limits_{i=1}^{m}}
%EndExpansion
\Vert b_{i}\Vert_{\ast}\left \Vert f_{i}\right \Vert _{L_{p_{i}}\left(
{\mathbb{R}^{n}}\right)  }.
\]

\end{corollary}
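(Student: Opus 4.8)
The plan is to view $M_{\overrightarrow b}^{(m)}$ and $T_{\overrightarrow b}^{(m)}$ as the endpoint case $\alpha=0$ of the fractional commutators treated in Lemma \ref{Lemma0} and Corollary \ref{Corollary0*}, and in each case to pass to a pointwise estimate and then split the $L_p$ norm by the generalized Hölder inequality with exponents $p_i/p$ (admissible since $\sum_{i=1}^m 1/p_i=1/p$, hence $p\le p_i$). For $M_{\overrightarrow b}^{(m)}$ I would first use that the supremum of a product is at most the product of the suprema, which gives the pointwise bound
\[
M_{\overrightarrow b}^{(m)}\left(\overrightarrow f\right)(x)\le \prod_{i=1}^m\sup_{t>0}\frac{1}{|B(x,t)|}\int_{B(x,t)}|b_i(x)-b_i(y_i)|\,|f_i(y_i)|\,dy_i=\prod_{i=1}^m M_{b_i}(f_i)(x),
\]
where $M_{b}$ denotes the maximal commutator, namely the $m=1$, $\alpha=0$ instance of the operator in Corollary \ref{Corollary0*}. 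One then invokes the classical estimate $\|M_{b}g\|_{L_{p_i}}\le C\|b\|_{\ast}\|g\|_{L_{p_i}}$ for $b\in BMO$ and $1<p_i<\infty$ (equivalently, the $m=1$, $\alpha\to0^{+}$ limit of Corollary \ref{Corollary0*}), applies Hölder in $L_p$ to $\prod_i M_{b_i}(f_i)$, and multiplies the resulting inequalities over $i$ to reach $\|M_{\overrightarrow b}^{(m)}(\overrightarrow f)\|_{L_p}\le C\prod_{i=1}^m\|b_i\|_{\ast}\|f_i\|_{L_{p_i}}$.

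For $T_{\overrightarrow b}^{(m)}$ I would begin from the size estimate on the $m$-Calderón--Zygmund kernel $K$, which gives
\[
\left|T_{\overrightarrow b}^{(m)}\left(\overrightarrow f\right)(x)\right|\le C\int_{\left(\mathbb{R}^{n}\right)^{m}}\frac{1}{\left|(x-y_1,\ldots,x-y_m)\right|^{mn}}\prod_{i=1}^m|b_i(x)-b_i(y_i)|\,|f_i(y_i)|\,d\overrightarrow y,
\]
whose right-hand side is precisely the $\alpha=0$ endpoint of the operator $\widetilde I_{\overrightarrow b,\alpha}^{(m)}$ used in the proof of Corollary \ref{Corollary0*}. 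The boundedness $L_{p_1}\times\cdots\times L_{p_m}\to L_p$ of this endpoint operator, with constant $C\prod_{i=1}^m\|b_i\|_{\ast}$, is the multilinear Calderón--Zygmund commutator estimate; it is obtained from Theorem \ref{TeoA} by exactly the scheme used for Lemma \ref{Lemma0} in \cite{Si}. Feeding this back yields $\|T_{\overrightarrow b}^{(m)}(\overrightarrow f)\|_{L_p}\le C\prod_{i=1}^m\|b_i\|_{\ast}\|f_i\|_{L_{p_i}}$, which is the first assertion.

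The only genuine obstacle is the endpoint $\alpha=0$. For $\alpha>0$ the size-estimate domination produces an integrable fractional kernel and Lemma \ref{Lemma0} applies verbatim, but at $\alpha=0$ the kernel $\left|(x-y_1,\ldots,x-y_m)\right|^{-mn}$ is no longer integrable near the diagonal, so for $T_{\overrightarrow b}^{(m)}$ one cannot avoid the cancellation of the Calderón--Zygmund kernel and must appeal to the multilinear commutator theorem (equivalently, rerun the proof of Lemma \ref{Lemma0} with Theorem \ref{TeoA} replacing the Hardy--Littlewood--Sobolev-type bound). The naive algebraic expansion $\prod_{i=1}^m(b_i(x)-b_i(y_i))=\sum_{S\subseteq\{1,\ldots,m\}}(-1)^{|S|}\prod_{i\notin S}b_i(x)\prod_{i\in S}b_i(y_i)$ does not shorten this, since $\|b_if_i\|_{L_{p_i}}$ is not controlled by $\|b_i\|_{\ast}\|f_i\|_{L_{p_i}}$. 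By contrast, for $M_{\overrightarrow b}^{(m)}$ no such issue arises: once the maximal commutator is known to be bounded on $L_{p_i}$, the product-plus-Hölder reduction is immediate.
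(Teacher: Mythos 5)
Your proposal is correct, and in fact it is more careful than the paper's own treatment: the paper simply calls this a ``simple corollary of Lemma~\ref{Lemma0} and Corollary~\ref{Corollary0*},'' but those statements require $0<\alpha<mn$, so they do not literally apply at the endpoint $\alpha=0$; the paper offers no further argument. Your route for $M_{\overrightarrow b}^{(m)}$ --- the factorization $\sup_t\prod_i A_i(t)\le\prod_i\sup_t A_i(t)$ giving $M_{\overrightarrow b}^{(m)}(\overrightarrow f)(x)\le\prod_{i=1}^m M_{b_i}(f_i)(x)$, followed by the generalized H\"older inequality and the classical $L_{p_i}$-bound for the single-variable maximal commutator with a $BMO$ symbol --- is genuinely different from the pointwise domination by $\widetilde I_{\overrightarrow b,\alpha}^{(m)}$ that the paper uses for $\alpha>0$, and it has the virtue of being valid precisely where that domination breaks down. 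For $T_{\overrightarrow b}^{(m)}$ you correctly diagnose that the size estimate alone is useless (the formal pointwise bound you display has a non-integrable kernel and is not in general finite; the cancellation of the Calder\'on--Zygmund kernel is essential) and that one must invoke the multilinear Calder\'on--Zygmund commutator theorem in $L_{p_1}\times\cdots\times L_{p_m}\to L_p$ directly, which is the right call and is presumably also the paper's intent.

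Two small inaccuracies worth flagging. First, the parenthetical that the one-variable maximal commutator bound is ``equivalently, the $m=1$, $\alpha\to0^{+}$ limit of Corollary~\ref{Corollary0*}'' is not a valid derivation: the constants in the fractional estimates blow up as $\alpha\to0^{+}$, and the target exponent $q$ collapses onto $p$ in that limit, so no uniformity is available. You should simply cite the classical result (which you also do). Second, attributing the multilinear CZ commutator bound to ``the scheme used for Lemma~\ref{Lemma0} in \cite{Si}'' is slightly off --- \cite{Si} concerns fractional operators; the singular-integral case is e.g.\ \cite{Xu} or the Lerner--Ombrosi--P\'erez--Torres--Trujillo-Gonz\'alez theorem. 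Neither issue affects the correctness of the conclusion.
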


The purpose of this paper is to consider the mapping properties on
$M_{p_{1},\varphi_{1}}\times \cdots \times M_{p_{m},\varphi_{m}}$ and
$VM_{p_{1},\varphi_{1}}\times \cdots \times VM_{p_{m},\varphi_{m}}$ for the
commutators of multilinear fractional maximal and integral{\ operators},
respectively. Similar results still hold for commutators of multilinear
maximal and singular integral{\ operators. }Commutators of multilinear
fractional maximal and integral{\ operators on} product generalized Morrey
spaces have not also been studied so far and this paper seems to be the first
in this direction. Now, let us state the main results of this paper. Indeed
our final result is the following theorem, which is an extension of Theorem
7.4. and Corollary 7.5. in \cite{Guliyev}.

\begin{theorem}
\label{teo1} Let $0<\alpha<mn$ and $1\leq p_{i}<\frac{mn}{\alpha}$ with
$\frac{1}{p}=%
%TCIMACRO{\dsum \limits_{i=1}^{m}}%
%BeginExpansion
{\displaystyle \sum \limits_{i=1}^{m}}
%EndExpansion
\frac{1}{p_{i}}$, $\frac{1}{q}=\sum \limits_{i=1}^{m}\frac{1}{p_{i}}%
+\sum \limits_{i=1}^{m}\frac{1}{q_{i}}-\frac{\alpha}{n}$ and $\overrightarrow
{b}\in \left(  BMO\right)  ^{m}({\mathbb{R}^{n}})$ for $i=1,\ldots,m$. Let
functions $\varphi,\varphi_{i}:{\mathbb{R}^{n}\times}\left(  0,\infty \right)
\rightarrow \left(  0,\infty \right)  $ $\left(  i=1,\ldots,m\right)  $ and
$(\varphi_{1},\ldots,\varphi_{m},\varphi)$ satisfies the condition%
\begin{equation}%
%TCIMACRO{\dint \limits_{r}^{\infty}}%
%BeginExpansion
{\displaystyle \int \limits_{r}^{\infty}}
%EndExpansion
\left(  1+\ln \frac{t}{r}\right)  ^{m}\frac{\operatorname*{essinf}%
\limits_{t<\tau<\infty}%
%TCIMACRO{\dprod \limits_{i=1}^{m}}%
%BeginExpansion
{\displaystyle \prod \limits_{i=1}^{m}}
%EndExpansion
\varphi_{i}(x,\tau)\tau^{\frac{n}{p}}}{t^{n\left(  \frac{1}{q}-%
%TCIMACRO{\dsum \limits_{i=1}^{m}}%
%BeginExpansion
{\displaystyle \sum \limits_{i=1}^{m}}
%EndExpansion
\frac{1}{q_{i}}\right)  +1}}dt\leq C\varphi \left(  x,r\right)  , \label{50}%
\end{equation}
where $C$ does not depend on $x\in{\mathbb{R}^{n}}$ and $r>0$.

Then, $I_{\alpha,\overrightarrow{b}}^{\left(  m\right)  }$ and $M_{\alpha
,\overrightarrow{b}}^{\left(  m\right)  }$ $\left(  m\in%
%TCIMACRO{\U{2115} }%
%BeginExpansion
\mathbb{N}
%EndExpansion
\right)  $ are bounded operators from product space $M_{p_{1},\varphi_{1}%
}\times \cdots \times M_{p_{m},\varphi_{m}}$ to $M_{q,\varphi}$. Moreover, we
have
\begin{equation}
\left \Vert I_{\alpha,\overrightarrow{b}}^{\left(  m\right)  }\left(
\overrightarrow{f}\right)  \right \Vert _{M_{q,\varphi}}\lesssim \left \Vert
\overrightarrow{b}\right \Vert _{\mathfrak{BMO}}\left \Vert f_{i}\right \Vert
_{M_{p_{i},\varphi_{i}}}\lesssim%
%TCIMACRO{\dprod \limits_{i=1}^{m}}%
%BeginExpansion
{\displaystyle \prod \limits_{i=1}^{m}}
%EndExpansion
\Vert b_{i}\Vert_{\ast}\left \Vert f_{i}\right \Vert _{M_{p_{i},\varphi_{i}}},
\label{51}%
\end{equation}%
\begin{equation}
\left \Vert M_{\alpha,\overrightarrow{b}}^{\left(  m\right)  }\left(
\overrightarrow{f}\right)  \right \Vert _{M_{q,\varphi}}\lesssim \left \Vert
\overrightarrow{b}\right \Vert _{\mathfrak{BMO}}\left \Vert f_{i}\right \Vert
_{M_{p_{i},\varphi_{i}}}\lesssim%
%TCIMACRO{\dprod \limits_{i=1}^{m}}%
%BeginExpansion
{\displaystyle \prod \limits_{i=1}^{m}}
%EndExpansion
\Vert b_{i}\Vert_{\ast}\left \Vert f_{i}\right \Vert _{M_{p_{i},\varphi_{i}}}.
\label{52}%
\end{equation}

\end{theorem}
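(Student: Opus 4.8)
The plan is to reduce the norm estimate on the generalized Morrey space to a pointwise local-estimate on balls plus a Hardy-type integral inequality, following the standard Guliyev scheme adapted to the multilinear commutator setting. First, since by \eqref{4} we have the pointwise domination $M_{\alpha,\overrightarrow{b}}^{(m)}(\overrightarrow{f})(x)\lesssim \widetilde{I}_{\overrightarrow{b},\alpha}^{(m)}(|\overrightarrow{f}|)(x)$ and since clearly $|I_{\alpha,\overrightarrow{b}}^{(m)}(\overrightarrow{f})(x)|\le \widetilde{I}_{\overrightarrow{b},\alpha}^{(m)}(|\overrightarrow{f}|)(x)$, it suffices to prove \eqref{51} for $\widetilde{I}_{\overrightarrow{b},\alpha}^{(m)}$; this handles both operators at once and reduces everything to a positive kernel. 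Fix a ball $B_{0}=B(x_{0},r)$ and split each $f_{i}=f_{i}^{0}+f_{i}^{\infty}$ with $f_{i}^{0}=f_{i}\chi_{2B_{0}}$, $f_{i}^{\infty}=f_{i}\chi_{(2B_{0})^{c}}$. Expanding the product $\prod_{i=1}^{m}(f_{i}^{0}+f_{i}^{\infty})$ gives $2^{m}$ terms; the "all-local" term $\overrightarrow{f}^{\,0}$ is controlled by Corollary \ref{Corollary0*} (the strong $(p_{1},\dots,p_{m})\to q$ bound), noting $\|f_{i}^{0}\|_{L_{p_{i}}}\lesssim \varphi_{i}(x_{0},r)\,|B_{0}|^{1/p_{i}}\|f_{i}\|_{M_{p_{i},\varphi_{i}}}$, so that this contribution over $B_0$ is $\lesssim \prod_{i}\|b_i\|_{\ast}\,|B_0|^{1/q}\varphi(x_0,r)\prod_i\|f_i\|_{M_{p_i,\varphi_i}}$ after invoking \eqref{50} (with the trivial lower bound for the integral). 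Every remaining term contains at least one factor $f_{i}^{\infty}$.

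For a mixed term, say with $f_{j}^{\infty}$ appearing for $j$ in some nonempty set $S$, one estimates $\widetilde{I}_{\overrightarrow{b},\alpha}^{(m)}$ pointwise on $y\in B_{0}$. In each such term one further splits each difference $b_{i}(y)-b_{i}(y_{i})$ as $[b_{i}(y)-(b_{i})_{B_{0}}]+[(b_{i})_{B_{0}}-b_{i}(y_{i})]$, producing a sum over subsets of $\{1,\dots,m\}$; the factors $b_{i}(y)-(b_{i})_{B_{0}}$ come outside the $y_{i}$-integrals, and on $B_0$ one uses \eqref{5.1}/\eqref{c} to control $\|b_i-(b_i)_{B_0}\|_{L_{q_i}(B_0)}$ by $|B_0|^{1/q_i}\|b_i\|_{\ast}$. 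Because $y\in B_{0}$ and at least one $y_{j}\in(2B_{0})^{c}$, we have $|(y-y_{1},\dots,y-y_{m})|\ge |y-y_{j}|\gtrsim |y_{j}-x_{0}|$, so the kernel is dominated by a sum of dyadic pieces: integrating over $2^{k+1}B_{0}\setminus 2^{k}B_{0}$ for $k\ge 1$ gives, via Hölder in each $y_{i}$ and \eqref{1*}--\eqref{3*} for the $b_{i}$-factors inside the integrals (these produce the $(1+\ln\frac{2^{k+1}r}{r})^{m}=(1+k\ln 2)^{m}\approx(1+\ln\frac{t}{r})^{m}$ logarithmic factors), a bound of the form $\prod_i\|b_i\|_{\ast}\prod_i\|f_i\|_{M_{p_i,\varphi_i}}$ times $\int_{2r}^{\infty}(1+\ln\frac{t}{r})^{m}\,\operatorname{essinf}_{t<\tau}\big(\prod_i\varphi_i(x_0,\tau)\tau^{n/p}\big)\,t^{-n(\frac1q-\sum_i\frac1{q_i})-1}\,dt$. (The $\operatorname{essinf}$ appears because $\|f_i\|_{L_{p_i}(2^{k+1}B_0)}\le \varphi_i(x_0,2^{k+1}r)|2^{k+1}B_0|^{1/p_i}\|f_i\|_{M_{p_i,\varphi_i}}$ and one takes the worst value of the radius in the dyadic range, exactly as in Guliyev's argument.) By hypothesis \eqref{50} this last integral is $\le C\varphi(x_0,r)$.

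Collecting the finitely many terms, one obtains $|B_{0}|^{-1/q}\|\widetilde{I}_{\overrightarrow{b},\alpha}^{(m)}(|\overrightarrow f|)\|_{L_{q}(B_{0})}\lesssim \varphi(x_{0},r)\prod_{i=1}^{m}\|b_{i}\|_{\ast}\|f_{i}\|_{M_{p_{i},\varphi_{i}}}$, uniformly in $x_{0}$ and $r$; dividing by $\varphi(x_0,r)$ and taking the supremum yields \eqref{51}, and the first inequality there follows from the Remark bounding $\|\overrightarrow b\|_{\mathfrak{BMO}}\le\prod_i\|b_i\|_{\ast}$ (read in the opposite direction as a factorization of the product of BMO-norms). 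Then \eqref{52} follows from \eqref{4}. I expect the main obstacle to be bookkeeping: carefully organizing the double expansion — first over local/global pieces of the $f_{i}$, then over which $b_{i}$-differences are pulled outside — and verifying that in every one of the resulting terms the presence of at least one $f_{j}^{\infty}$ forces the kernel decay that makes the dyadic sum reproduce precisely the integral in \eqref{50}, with the correct power $n(\frac1q-\sum_i\frac1{q_i})+1 = n(\frac1p-\frac{\alpha}{n})+1-n\sum_i\frac1{q_i}$ of $t$ and the correct $m$-th power of the logarithm. The analytic content is entirely in Corollary \ref{Corollary0*}, the John--Nirenberg consequences \eqref{1*}--\eqref{3*}, and the hypothesis \eqref{50}; no new estimate is needed.
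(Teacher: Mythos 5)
Your proposal follows the same Guliyev scheme as the paper: obtain a local $L_q$--estimate on a ball $B(x_0,r)$ of the form
\[
\Vert I_{\alpha,\overrightarrow b}^{(m)}(\overrightarrow f)\Vert_{L_q(B(x_0,r))}
\lesssim \prod_i\Vert b_i\Vert_\ast\, r^{n/q}\int_{2r}^{\infty}\Bigl(1+\ln\tfrac tr\Bigr)^m
\prod_i\Vert f_i\Vert_{L_{p_i}(B(x_0,t))}\,\frac{dt}{t^{n(\frac1q-\sum_i\frac1{q_i})+1}}
\]
and then insert condition \eqref{50} via the monotonicity of $t\mapsto\Vert f_i\Vert_{L_{p_i}(B(x,t))}$ and the essential infimum identity \eqref{5}. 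The two differences from the paper are organizational: you reduce both $I_{\alpha,\overrightarrow b}^{(m)}$ and $M_{\alpha,\overrightarrow b}^{(m)}$ to the positive operator $\widetilde I_{\overrightarrow b,\alpha}^{(m)}$ at the outset (the paper does this only for $M$, at the end), and you fold what the paper isolates as Lemma \ref{Lemma} into the theorem's proof, describing the $2^m$-way local/global splitting and the further $2^m$-way splitting of the $b_i$-differences directly. The substance — Corollary \ref{Corollary0*}, the John--Nirenberg estimates \eqref{1*}--\eqref{3*}, the dyadic decomposition of the off-ball kernel, and finally \eqref{50} — is the same.

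One point where your sketch is imprecise and where the paper is more careful: for the all-local term you propose to plug $\Vert f_i^0\Vert_{L_{p_i}}\lesssim\varphi_i(x_0,r)|B_0|^{1/p_i}\Vert f_i\Vert_{M_{p_i,\varphi_i}}$ directly and "invoke \eqref{50} with the trivial lower bound." That single-point lower bound of \eqref{50} only controls $\operatorname{essinf}_{\tau>r}\prod_i\varphi_i(x,\tau)\tau^{n/p}$, which is in general strictly smaller than $\prod_i\varphi_i(x,r)r^{n/p}$, so the inequality you need (namely $\prod_i\varphi_i(x_0,r)r^{n/p}\lesssim r^{n/q}\varphi(x_0,r)$) does not follow. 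The paper avoids this by \emph{not} substituting the Morrey bound on the local piece; instead it keeps $\prod_i\Vert f_i\Vert_{L_{p_i}(2B)}$, rewrites it as an integral over $[2r,\infty)$ via a normalization factor, uses monotonicity to pass to $\prod_i\Vert f_i\Vert_{L_{p_i}(B(x_0,t))}$, and only then applies the essinf argument \eqref{9} and condition \eqref{50} to the full integral. You should replace your treatment of $F_1$ by this route (which you already use, correctly, for the mixed and all-global terms). With that fix, the proposal coincides with the paper's argument.
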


\begin{corollary}
Let $1<p_{i}<\infty$ and $0<p<\infty$ with $\frac{1}{p}=%
%TCIMACRO{\dsum \limits_{i=1}^{m}}%
%BeginExpansion
{\displaystyle \sum \limits_{i=1}^{m}}
%EndExpansion
\frac{1}{p_{i}}$ and $\overrightarrow{b}\in \left(  BMO\right)  ^{m}%
({\mathbb{R}^{n}})$ for $i=1,\ldots,m$. Let functions $\varphi,\varphi
_{i}:{\mathbb{R}^{n}\times}\left(  0,\infty \right)  \rightarrow \left(
0,\infty \right)  $ $\left(  i=1,\ldots,m\right)  $ and $(\varphi_{1}%
,\ldots,\varphi_{m},\varphi)$ satisfies the condition%
\[%
%TCIMACRO{\dint \limits_{r}^{\infty}}%
%BeginExpansion
{\displaystyle \int \limits_{r}^{\infty}}
%EndExpansion
\left(  1+\ln \frac{t}{r}\right)  ^{m}\frac{\operatorname*{essinf}%
\limits_{t<\tau<\infty}%
%TCIMACRO{\dprod \limits_{i=1}^{m}}%
%BeginExpansion
{\displaystyle \prod \limits_{i=1}^{m}}
%EndExpansion
\varphi_{i}(x,\tau)\tau^{\frac{n}{p}}}{t^{\frac{n}{p}+1}}dt\leq C\varphi
\left(  x,r\right)  ,
\]
where $C$ does not depend on $x\in{\mathbb{R}^{n}}$ and $r>0$.

Then, $T_{\overrightarrow{b}}^{\left(  m\right)  }$ and $M_{\overrightarrow
{b}}^{\left(  m\right)  }$ $\left(  m\in%
%TCIMACRO{\U{2115} }%
%BeginExpansion
\mathbb{N}
%EndExpansion
\right)  $ are bounded operators from product space $M_{p_{1},\varphi_{1}%
}\times \cdots \times M_{p_{m},\varphi_{m}}$ to $M_{p,\varphi}$. Moreover, we
have
\[
\left \Vert T_{\overrightarrow{b}}^{\left(  m\right)  }\left(  \overrightarrow
{f}\right)  \right \Vert _{M_{p,\varphi}}\lesssim \left \Vert \overrightarrow
{b}\right \Vert _{\mathfrak{BMO}}\left \Vert f_{i}\right \Vert _{M_{p_{i}%
,\varphi_{i}}}\lesssim%
%TCIMACRO{\dprod \limits_{i=1}^{m}}%
%BeginExpansion
{\displaystyle \prod \limits_{i=1}^{m}}
%EndExpansion
\Vert b_{i}\Vert_{\ast}\left \Vert f_{i}\right \Vert _{M_{p_{i},\varphi_{i}}},
\]%
\[
\left \Vert M_{\overrightarrow{b}}^{\left(  m\right)  }\left(  \overrightarrow
{f}\right)  \right \Vert _{M_{p,\varphi}}\lesssim \left \Vert \overrightarrow
{b}\right \Vert _{\mathfrak{BMO}}\left \Vert f_{i}\right \Vert _{M_{p_{i}%
,\varphi_{i}}}\lesssim%
%TCIMACRO{\dprod \limits_{i=1}^{m}}%
%BeginExpansion
{\displaystyle \prod \limits_{i=1}^{m}}
%EndExpansion
\Vert b_{i}\Vert_{\ast}\left \Vert f_{i}\right \Vert _{M_{p_{i},\varphi_{i}}}.
\]

\end{corollary}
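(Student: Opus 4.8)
The statement is the $\alpha=0$ endpoint of Theorem \ref{teo1}, so my plan is to run the same scheme with the fractional ingredients replaced by their non-fractional counterparts: in place of Lemma \ref{Lemma0} I would use the strong $L_{p_{1}}\times\cdots\times L_{p_{m}}\to L_{p}$ bounds for $T_{\overrightarrow{b}}^{(m)}$ and $M_{\overrightarrow{b}}^{(m)}$ recorded just above; in place of the fractional kernel bound I would use the Calder\'{o}n--Zygmund size estimate $|K(x,y_{1},\ldots,y_{m})|\lesssim|(x-y_{1},\ldots,x-y_{m})|^{-mn}$; and I note that condition (\ref{50}) with $\alpha=0$ is exactly the hypothesis assumed here (using $\sum_{i}\frac{n}{p_{i}}=\frac{n}{p}$). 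Moreover, taking $\alpha=0$ in the derivation of (\ref{4}) shows that $M_{\overrightarrow{b}}^{(m)}(\overrightarrow{f})(x)$ is pointwise dominated by the multilinear operator with positive kernel $|(x-y_{1},\ldots,x-y_{m})|^{-mn}$ and the same commutator structure; since that operator enjoys the same Lebesgue bound and the same kernel estimate as $T_{\overrightarrow{b}}^{(m)}$, it suffices to prove the asserted inequality for $T_{\overrightarrow{b}}^{(m)}$.

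First I would establish the local estimate: for a ball $B=B(x_{0},r)$,
\[
\|T_{\overrightarrow{b}}^{(m)}(\overrightarrow{f})\|_{L_{p}(B(x_{0},r))}\lesssim\prod_{i=1}^{m}\|b_{i}\|_{\ast}\; r^{\frac{n}{p}}\int_{2r}^{\infty}\Big(1+\ln\frac{t}{r}\Big)^{m}\prod_{i=1}^{m}\|f_{i}\|_{L_{p_{i}}(B(x_{0},t))}\,\frac{dt}{t^{\frac{n}{p}+1}}.
\]
To get it, split $f_{i}=f_{i}^{0}+f_{i}^{\infty}$ with $f_{i}^{0}=f_{i}\chi_{2B}$ and expand $\prod_{i}(f_{i}^{0}+f_{i}^{\infty})$ into $2^{m}$ products. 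The leading product $\prod_{i}f_{i}^{0}$ is controlled by the strong Lebesgue bound, giving $\prod_{i}\|b_{i}\|_{\ast}\prod_{i}\|f_{i}\|_{L_{p_{i}}(2B)}$, which already has the required shape since $\prod_{i}\|f_{i}\|_{L_{p_{i}}(2B)}\lesssim r^{n/p}\int_{2r}^{\infty}\prod_{i}\|f_{i}\|_{L_{p_{i}}(B(x_{0},t))}\,t^{-n/p-1}dt$. For each remaining product (each carrying at least one factor $f_{j}^{\infty}$) and for $x\in B$, I would use the kernel size estimate together with $|b_{i}(x)-b_{i}(y_{i})|\le|b_{i}(x)-(b_{i})_{B}|+|(b_{i})_{B}-b_{i}(y_{i})|$, decompose $(2B)^{c}$ into the dyadic annuli $2^{j+1}B\setminus2^{j}B$, estimate each piece by H\"{o}lder's inequality pairing $f_{i}$ with $b_{i}-(b_{i})_{B}$, bound the arising $\|b_{i}-(b_{i})_{B}\|_{L_{p_{i}'}(2^{j+1}B)}$ via (\ref{c}) and (\ref{5.2}) (which produces factors $1+j$), and sum in $j$: the powers of $2^{j}r$ from the kernel, from $|2^{j+1}B|$, and from the H\"{o}lder pairing combine to $(2^{j}r)^{-n/p}$, which is summable, while $(1+j)^{m}$ reassembles into the weight $(1+\ln\frac{t}{r})^{m}$ and the sum turns into the displayed integral.

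Next I would pass to the Morrey norms. For $f_{i}\in M_{p_{i},\varphi_{i}}$ and any $\tau>t$ one has $\|f_{i}\|_{L_{p_{i}}(B(x_{0},t))}\le\|f_{i}\|_{L_{p_{i}}(B(x_{0},\tau))}\lesssim\varphi_{i}(x_{0},\tau)\,\tau^{n/p_{i}}\|f_{i}\|_{M_{p_{i},\varphi_{i}}}$, and since the left side does not depend on $\tau$, taking the essential infimum over $\tau\in(t,\infty)$ and multiplying over $i$ gives
\[
\prod_{i=1}^{m}\|f_{i}\|_{L_{p_{i}}(B(x_{0},t))}\lesssim\Big(\prod_{i=1}^{m}\|f_{i}\|_{M_{p_{i},\varphi_{i}}}\Big)\operatorname*{essinf}_{t<\tau<\infty}\Big(\tau^{\frac{n}{p}}\prod_{i=1}^{m}\varphi_{i}(x_{0},\tau)\Big).
\]
Substituting into the local estimate, dividing by $\varphi(x_{0},r)\,|B(x_{0},r)|^{1/p}$, and invoking the hypothesis of the Corollary yields $\mathfrak{M}_{p,\varphi}\big(T_{\overrightarrow{b}}^{(m)}(\overrightarrow{f});x_{0},r\big)\lesssim\prod_{i}\|b_{i}\|_{\ast}\prod_{i}\|f_{i}\|_{M_{p_{i},\varphi_{i}}}$; taking the supremum over $x_{0},r$ gives the bound for $T_{\overrightarrow{b}}^{(m)}$, hence (by the reduction above) for $M_{\overrightarrow{b}}^{(m)}$. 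The $\|\overrightarrow{b}\|_{\mathfrak{BMO}}$-form follows in the same way by keeping the oscillations grouped, since the data $b_{i}$ enter the tail estimate only through products $\prod_{i}|B|^{-1}\int_{B}|b_{i}-(b_{i})_{B}|$, which are controlled by $\|\overrightarrow{b}\|_{\mathfrak{BMO}}$ (cf. the Remark and (\ref{1*})--(\ref{3*})).

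I expect the main obstacle to be precisely the bookkeeping in the local estimate: the $2^{m}$-fold expansion, the two-way splitting of each of the $m$ commutator factors, and the dyadic-annulus decomposition together generate a large (but finite) number of terms, and in each one must choose the H\"{o}lder exponents so that all powers of $2^{j}r$ cancel to leave a geometrically convergent series carrying total logarithmic weight $(1+j)^{m}$ --- equivalently, so that everything reassembles into the single clean integral with the weight $(1+\ln\frac{t}{r})^{m}$ and exponent $t^{-n/p-1}$. The endpoint $\alpha=0$ causes no degeneracy here, because the strong multilinear Lebesgue bounds for $T_{\overrightarrow{b}}^{(m)}$ and $M_{\overrightarrow{b}}^{(m)}$ hold with no smallness of $\alpha$.
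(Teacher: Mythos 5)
Your plan for $T_{\overrightarrow{b}}^{(m)}$ is sound and is exactly the scheme the paper uses: establish the local estimate
\[
\|T_{\overrightarrow{b}}^{(m)}(\overrightarrow{f})\|_{L_{p}(B(x_{0},r))}\lesssim\prod_{i=1}^{m}\|b_{i}\|_{\ast}\,r^{\frac{n}{p}}\int_{2r}^{\infty}\Bigl(1+\ln\frac{t}{r}\Bigr)^{m}\prod_{i=1}^{m}\|f_{i}\|_{L_{p_{i}}(B(x_{0},t))}\frac{dt}{t^{\frac{n}{p}+1}}
\]
by the $2^{m}$-fold splitting $f_{i}=f_{i}^{0}+f_{i}^{\infty}$, then feed it through (\ref{5})/(\ref{9})/(\ref{13}) and the hypothesis condition, exactly as in the proof of Theorem \ref{teo1}; the near-near term is controlled by the strong $L_{p_{1}}\times\cdots\times L_{p_{m}}\to L_{p}$ bound and the far terms by the size estimate on the Calder\'{o}n--Zygmund kernel, the splitting of $b_{i}(x)-b_{i}(y_{i})$, the annuli, and (\ref{5.2})--(\ref{c}).

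The one step that fails is your reduction for $M_{\overrightarrow{b}}^{(m)}$. You invoke the $\alpha=0$ instance of (\ref{4}) to dominate $M_{\overrightarrow{b}}^{(m)}$ pointwise by $\widetilde{I}_{\overrightarrow{b},0}^{(m)}$, the multilinear operator with positive kernel $|(x-y_{1},\ldots,x-y_{m})|^{-mn}$, and assert that this operator ``enjoys the same Lebesgue bound'' as $T_{\overrightarrow{b}}^{(m)}$. That is false: at $\alpha=0$ the kernel is non-integrable near the diagonal and has no cancellation, so $\widetilde{I}_{\overrightarrow{b},0}^{(m)}(|\overrightarrow{f}|)$ diverges on any nontrivial input and is certainly not bounded from $L_{p_{1}}\times\cdots\times L_{p_{m}}$ to $L_{p}$. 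The pointwise inequality (\ref{4}) still holds formally at $\alpha=0$, but its right-hand side is $+\infty$, so the domination is vacuous; in particular the near-near term in the local estimate for $\widetilde{I}_{\overrightarrow{b},0}^{(m)}$ has no Lebesgue bound to use. This is precisely why the paper deploys (\ref{4}) and (\ref{52}) only for $\alpha>0$.

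The fix is short: perform the $f_{i}=f_{i}^{0}+f_{i}^{\infty}$ decomposition directly inside $M_{\overrightarrow{b}}^{(m)}$ (multi-sublinearity gives the $2^{m}$ terms). For the near-near term use the strong Lebesgue bound for $M_{\overrightarrow{b}}^{(m)}$ \emph{itself}, already recorded in the paper. For every term containing some $f_{j}^{\infty}$, note that for $x\in B(x_{0},r)$ the average $|B(x,t)|^{-1}\int_{B(x,t)}$ against $f_{j}^{\infty}$ vanishes unless $t>r$; on the range $t>r$ one has $B(x,t)\subset B(x_{0},2t)$ and $|B(x,t)|^{-1}\approx t^{-n}$, and there the pointwise domination by the kernel $|(x-\overrightarrow{y})|^{-mn}$ is both legitimate and finite, since the diagonal is avoided. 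Those terms then reassemble into the same tail integral as for $T_{\overrightarrow{b}}^{(m)}$, and the rest of your argument goes through unchanged.
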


Our another main result is the following.

\begin{theorem}
\label{teo2}Let $0<\alpha<mn$ and $1\leq p_{i}<\frac{mn}{\alpha}$ with
$\frac{1}{p}=%
%TCIMACRO{\dsum \limits_{i=1}^{m}}%
%BeginExpansion
{\displaystyle \sum \limits_{i=1}^{m}}
%EndExpansion
\frac{1}{p_{i}}$, $\frac{1}{q}=\sum \limits_{i=1}^{m}\frac{1}{p_{i}}%
+\sum \limits_{i=1}^{m}\frac{1}{q_{i}}-\frac{\alpha}{n}$ and $\overrightarrow
{b}\in \left(  BMO\right)  ^{m}({\mathbb{R}^{n}})$ for $i=1,\ldots,m$. Let
functions $\varphi,\varphi_{i}:{\mathbb{R}^{n}\times}\left(  0,\infty \right)
\rightarrow \left(  0,\infty \right)  $ $\left(  i=1,\ldots,m\right)  $ and
$(\varphi_{1},\ldots,\varphi_{m},\varphi)$ satisfies conditions (\ref{2}%
)-(\ref{3}) and%
\begin{equation}%
%TCIMACRO{\dint \limits_{r}^{\infty}}%
%BeginExpansion
{\displaystyle \int \limits_{r}^{\infty}}
%EndExpansion
\left(  1+\ln \frac{t}{r}\right)  ^{m}%
%TCIMACRO{\dprod \limits_{i=1}^{m}}%
%BeginExpansion
{\displaystyle \prod \limits_{i=1}^{m}}
%EndExpansion
\varphi_{i}(x,t)\frac{t^{\frac{n}{p}}}{t^{n\left(  \frac{1}{q}-%
%TCIMACRO{\dsum \limits_{i=1}^{m}}%
%BeginExpansion
{\displaystyle \sum \limits_{i=1}^{m}}
%EndExpansion
\frac{1}{q_{i}}\right)  +1}}dt\leq C_{0}\varphi \left(  x,r\right)  ,
\label{10*}%
\end{equation}
where $C_{0}$ does not depend on $x\in{\mathbb{R}^{n}}$ and $r>0$,%
\begin{equation}
\lim_{r\rightarrow0}\frac{\ln \frac{1}{r}}{\inf \limits_{x\in{\mathbb{R}^{n}}%
}\varphi(x,r)}=0 \label{11*}%
\end{equation}
and
\begin{equation}
c_{\delta}:=%
%TCIMACRO{\dint \limits_{\delta}^{\infty}}%
%BeginExpansion
{\displaystyle \int \limits_{\delta}^{\infty}}
%EndExpansion
\left(  1+\ln \left \vert t\right \vert \right)  ^{m}\sup \limits_{x\in
{\mathbb{R}^{n}}}%
%TCIMACRO{\dprod \limits_{i=1}^{m}}%
%BeginExpansion
{\displaystyle \prod \limits_{i=1}^{m}}
%EndExpansion
\varphi_{i}(x,t)\frac{t^{\frac{n}{p}}}{t^{n\left(  \frac{1}{q}-%
%TCIMACRO{\dsum \limits_{i=1}^{m}}%
%BeginExpansion
{\displaystyle \sum \limits_{i=1}^{m}}
%EndExpansion
\frac{1}{q_{i}}\right)  +1}}dt<\infty \label{12*}%
\end{equation}
for every $\delta>0$.

Then, $I_{\alpha,\overrightarrow{b}}^{\left(  m\right)  }$ and $M_{\alpha
,\overrightarrow{b}}^{\left(  m\right)  }$ $\left(  m\in%
%TCIMACRO{\U{2115} }%
%BeginExpansion
\mathbb{N}
%EndExpansion
\right)  $ are bounded operators from product space $VM_{p_{1},\varphi_{1}%
}\times \cdots \times VM_{p_{m},\varphi_{m}}$ to $VM_{q,\varphi}$. Moreover, we
have
\begin{equation}
\left \Vert I_{\alpha,\overrightarrow{b}}^{\left(  m\right)  }\left(
\overrightarrow{f}\right)  \right \Vert _{VM_{q,\varphi}}\lesssim \left \Vert
\overrightarrow{b}\right \Vert _{\mathfrak{BMO}}\left \Vert f_{i}\right \Vert
_{VM_{p_{i},\varphi_{i}}}\lesssim%
%TCIMACRO{\dprod \limits_{i=1}^{m}}%
%BeginExpansion
{\displaystyle \prod \limits_{i=1}^{m}}
%EndExpansion
\Vert b_{i}\Vert_{\ast}\left \Vert f_{i}\right \Vert _{VM_{p_{i},\varphi_{i}}},
\label{53}%
\end{equation}%
\begin{equation}
\left \Vert M_{\alpha,\overrightarrow{b}}^{\left(  m\right)  }\left(
\overrightarrow{f}\right)  \right \Vert _{VM_{q,\varphi}}\lesssim \left \Vert
\overrightarrow{b}\right \Vert _{\mathfrak{BMO}}\left \Vert f_{i}\right \Vert
_{VM_{p_{i},\varphi_{i}}}\lesssim%
%TCIMACRO{\dprod \limits_{i=1}^{m}}%
%BeginExpansion
{\displaystyle \prod \limits_{i=1}^{m}}
%EndExpansion
\Vert b_{i}\Vert_{\ast}\left \Vert f_{i}\right \Vert _{VM_{p_{i},\varphi_{i}}}.
\label{54}%
\end{equation}

\end{theorem}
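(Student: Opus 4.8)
The plan is to reduce the vanishing-Morrey statement to two ingredients: the already-established boundedness on the non-vanishing product Morrey spaces (Theorem \ref{teo1}, with its pointwise local estimate), and a careful analysis of the limit $\lim_{r\to 0}\sup_{x}\mathfrak{M}_{q,\varphi}(I_{\alpha,\overrightarrow{b}}^{(m)}(\overrightarrow{f});x,r)=0$. First I would recall (or re-derive, following the scheme used to prove Theorem \ref{teo1}) the fundamental local estimate: for every ball $B=B(x,r)$,
\[
\bigl\||I_{\alpha,\overrightarrow{b}}^{(m)}(\overrightarrow{f})|\bigr\|_{L_{q}(B(x,r))}
\lesssim \|\overrightarrow{b}\|_{\mathfrak{BMO}}\;|B(x,r)|^{\frac1q}
\int_{r}^{\infty}\Bigl(1+\ln\tfrac{t}{r}\Bigr)^{m}
\prod_{i=1}^{m}\varphi_i(x,t)\,\mathfrak{M}_{p_i,\varphi_i}(f_i;x,t)\,
\frac{t^{\frac{n}{p}}}{t^{n(\frac1q-\sum_i\frac1{q_i})+1}}\,dt .
\]
Dividing by $|B(x,r)|^{1/q}\varphi(x,r)$ and using $\mathfrak{M}_{p_i,\varphi_i}(f_i;x,t)\le\|f_i\|_{M_{p_i,\varphi_i}}$ together with (\ref{10*}) gives at once the boundedness $VM_{p_1,\varphi_1}\times\cdots\times VM_{p_m,\varphi_m}\to M_{q,\varphi}$, hence (\ref{53})--(\ref{54}) as norm estimates; it remains only to verify the vanishing property.

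For the vanishing property I would split the defining integral at a parameter $\delta_0>0$. Fix $\varepsilon>0$. Since $\overrightarrow{f}\in VM_{p_1,\varphi_1}\times\cdots\times VM_{p_m,\varphi_m}$, choose $\delta_0$ so small that $\mathfrak{M}_{p_i,\varphi_i}(f_i;x,t)<\varepsilon$ for all $x$ and all $0<t<\delta_0$, $i=1,\dots,m$. Write, for $r<\delta_0$,
\[
\frac{1}{\varphi(x,r)}\int_{r}^{\infty}=\frac{1}{\varphi(x,r)}\int_{r}^{\delta_0}
+\frac{1}{\varphi(x,r)}\int_{\delta_0}^{\infty}=:J_1(x,r)+J_2(x,r).
\]
On $[r,\delta_0]$ one has $\prod_i\mathfrak{M}_{p_i,\varphi_i}(f_i;x,t)<\varepsilon^{m}$ (since this is smaller than any single factor bound — or more cleanly, each factor is $<\varepsilon$), so by (\ref{10*}) applied with the same $r$, $J_1(x,r)\lesssim \varepsilon^{m}\,C_0$ uniformly in $x$ and $r$. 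For $J_2$, the integral $\int_{\delta_0}^{\infty}$ is, up to the logarithmic factor $(1+\ln\frac{t}{r})^{m}\le(1+\ln t+\ln\frac1r)^{m}\lesssim (1+\ln\frac1r)^m(1+\ln|t|)^m$ for $r<1$, controlled by $\|f_i\|_{M_{p_i,\varphi_i}}\cdot c_{\delta_0}$ where $c_{\delta_0}<\infty$ by (\ref{12*}); hence
\[
J_2(x,r)\lesssim \bigl(1+\ln\tfrac1r\bigr)^{m}\,c_{\delta_0}\,\frac{1}{\varphi(x,r)}\prod_{i=1}^{m}\|f_i\|_{M_{p_i,\varphi_i}}
\le c_{\delta_0}\prod_{i=1}^m\|f_i\|_{M_{p_i,\varphi_i}}\cdot\frac{(1+\ln\frac1r)^m}{\inf_x\varphi(x,r)} .
\]
By hypothesis (\ref{11*}), $\dfrac{(1+\ln\frac1r)^m}{\inf_x\varphi(x,r)}\to 0$ as $r\to 0$ (the $m$-th power is handled by iterating (\ref{11*}), or by noting $\ln\frac1r\le \varepsilon'\inf_x\varphi(x,r)$ eventually, so the power is $\le (\varepsilon')^m(\inf_x\varphi)^{m-1}\cdot\inf_x\varphi^{1-(m-1)}$ — this needs a small argument using also (\ref{3}) to keep $\inf_x\varphi$ bounded below issues in check, i.e.\ $\frac1{\inf_x\varphi(x,r)}$ is bounded). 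Therefore $\sup_x J_2(x,r)\to 0$. Combining, $\limsup_{r\to 0}\sup_x\mathfrak{M}_{q,\varphi}(I_{\alpha,\overrightarrow{b}}^{(m)}(\overrightarrow{f});x,r)\lesssim \varepsilon^{m}$; letting $\varepsilon\to 0$ finishes the $I$-case, and the $M_{\alpha,\overrightarrow{b}}^{(m)}$-case follows from the pointwise domination $M_{\alpha,\overrightarrow{b}}^{(m)}(\overrightarrow{f})\le C_{n,\alpha}^{-1}\widetilde{I}_{\overrightarrow{b},\alpha}^{(m)}(|\overrightarrow{f}|)$ in the spirit of (\ref{4}), which also yields that $M_{\alpha,\overrightarrow b}^{(m)}(\overrightarrow f)$ inherits the same local estimate.

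The main obstacle I anticipate is the bookkeeping in the local estimate for the commutator: expanding $\prod_{i}[b_i(x)-b_i(y_i)]$ one must insert and subtract the averages $(b_i)_{B}$, producing $2^m$ terms, and in each term estimate pieces of the form $\prod_i\|b_i-(b_i)_B\|_{L_{q_i}(2^{k}B)}$ on dyadic annuli using (\ref{1*})--(\ref{3*}), which is where the factor $(1+\ln\frac{t}{r})^m$ originates; keeping the Hölder exponents consistent ($\frac1q=\sum\frac1{p_i}+\sum\frac1{q_i}-\frac\alpha n$) across all terms and over the annular decomposition is the delicate part. Once that estimate is in hand exactly as in the proof of Theorem \ref{teo1}, the vanishing argument above is essentially the standard three-term ($J_1$ small by smallness of data near $0$, $J_2$ small by (\ref{11*})--(\ref{12*})) splitting and presents no further difficulty beyond the elementary treatment of the power of the logarithm.
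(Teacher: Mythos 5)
Your proposal follows the paper's scheme closely: derive the norm bounds (\ref{53})--(\ref{54}) from the local estimate of Lemma~\ref{Lemma} (as in Theorem~\ref{teo1}), then prove the vanishing property by splitting the integral at a fixed $\delta_0$, controlling the near part $J_1$ by the vanishing hypothesis on $\overrightarrow{f}$ together with (\ref{10*}), and the far part $J_2$ via (\ref{11*})--(\ref{12*}). Up to that point you match the paper's argument; the reduction to the Key Lemma, the choice of $\delta_0$, and the use of $\mathfrak{M}_{p_i,\varphi_i}(f_i;x,t)$ in place of $t^{-n/p}\prod_i\Vert f_i\Vert_{L_{p_i}(B(x,t))}/\prod_i\varphi_i(x,t)$ are all consistent rewritings of the same quantities.

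The one place your proposal genuinely diverges, and where it has a gap, is the treatment of the $m$-th power of the logarithm in $J_2$. You argue that $\ln\tfrac1r\le\varepsilon'\inf_x\varphi(x,r)$ eventually, hence $(\ln\tfrac1r)^m/\inf_x\varphi(x,r)\le(\varepsilon')^m(\inf_x\varphi(x,r))^{m-1}$, and appeal to (\ref{3}) to ``keep $\inf_x\varphi$ in check.'' But (\ref{3}) bounds $1/\inf_x\varphi(x,r)$ from above, i.e.\ it bounds $\inf_x\varphi$ \emph{from below}, not from above; and (\ref{2}) in fact forces $\inf_x\varphi(x,r)\to\infty$ as $r\to0$. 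Consequently $(\varepsilon')^m(\inf_x\varphi(x,r))^{m-1}$ does not tend to $0$ for $m\ge2$, and your expression $(\inf_x\varphi)^{m-1}\cdot(\inf_x\varphi)^{1-(m-1)}$ simplifies to $\inf_x\varphi$, which diverges. After the elementary bound $(1+\ln\tfrac tr)^m\lesssim(1+|\ln t|)^m(1+\ln\tfrac1r)^m$ (valid for $0<r<1$), what must be shown is that $(1+\ln\tfrac1r)^m/\inf_x\varphi(x,r)\to0$; for $m\ge2$ this is strictly stronger than (\ref{11*}), and nothing in your paragraph produces it. It is worth noting that the paper's own proof silently writes the $J_{\delta_0}$ bound as linear in $\ln\tfrac1r$ (namely $c_{\delta_0}+\widetilde{c_{\delta_0}}\ln\tfrac1r$), which is only justified when $m=1$; so your proposal does flag a real issue, but the proposed repair does not work, and a correct version would require replacing (\ref{11*}) by its $m$-th-power analogue (or an equivalent reformulation of $c_\delta$, $\widetilde{c_\delta}$).
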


\begin{corollary}
Let $1<p_{i}<\infty$ and $0<p<\infty$ with $\frac{1}{p}=%
%TCIMACRO{\dsum \limits_{i=1}^{m}}%
%BeginExpansion
{\displaystyle \sum \limits_{i=1}^{m}}
%EndExpansion
\frac{1}{p_{i}}$ and $\overrightarrow{b}\in \left(  BMO\right)  ^{m}%
({\mathbb{R}^{n}})$ for $i=1,\ldots,m$. Let functions $\varphi,\varphi
_{i}:{\mathbb{R}^{n}\times}\left(  0,\infty \right)  \rightarrow \left(
0,\infty \right)  $ $\left(  i=1,\ldots,m\right)  $ and $(\varphi_{1}%
,\ldots,\varphi_{m},\varphi)$ satisfies conditions (\ref{2})-(\ref{3}) and%
\[%
%TCIMACRO{\dint \limits_{r}^{\infty}}%
%BeginExpansion
{\displaystyle \int \limits_{r}^{\infty}}
%EndExpansion
\left(  1+\ln \frac{t}{r}\right)  ^{m}%
%TCIMACRO{\dprod \limits_{i=1}^{m}}%
%BeginExpansion
{\displaystyle \prod \limits_{i=1}^{m}}
%EndExpansion
\varphi_{i}(x,t)\frac{t^{\frac{n}{p}}}{t^{\frac{n}{p}+1}}dt\leq C_{0}%
\varphi \left(  x,r\right)  ,
\]
where $C_{0}$ does not depend on $x\in{\mathbb{R}^{n}}$ and $r>0$,%
\[
\lim_{r\rightarrow0}\frac{\ln \frac{1}{r}}{\inf \limits_{x\in{\mathbb{R}^{n}}%
}\varphi(x,r)}=0
\]
and
\[
c_{\delta}:=%
%TCIMACRO{\dint \limits_{\delta}^{\infty}}%
%BeginExpansion
{\displaystyle \int \limits_{\delta}^{\infty}}
%EndExpansion
\left(  1+\ln \left \vert t\right \vert \right)  ^{m}\sup \limits_{x\in
{\mathbb{R}^{n}}}%
%TCIMACRO{\dprod \limits_{i=1}^{m}}%
%BeginExpansion
{\displaystyle \prod \limits_{i=1}^{m}}
%EndExpansion
\varphi_{i}(x,t)\frac{t^{\frac{n}{p}}}{t^{\frac{n}{p}+1}}dt<\infty
\]
for every $\delta>0$.

Then, $T_{\overrightarrow{b}}^{\left(  m\right)  }$ and $M_{\overrightarrow
{b}}^{\left(  m\right)  }$ $\left(  m\in%
%TCIMACRO{\U{2115} }%
%BeginExpansion
\mathbb{N}
%EndExpansion
\right)  $ are bounded operators from product space $VM_{p_{1},\varphi_{1}%
}\times \cdots \times VM_{p_{m},\varphi_{m}}$ to $VM_{p,\varphi}$. Moreover, we
have
\[
\left \Vert T_{\overrightarrow{b}}^{\left(  m\right)  }\left(  \overrightarrow
{f}\right)  \right \Vert _{VM_{p,\varphi}}\lesssim \left \Vert \overrightarrow
{b}\right \Vert _{\mathfrak{BMO}}\left \Vert f_{i}\right \Vert _{VM_{p_{i}%
,\varphi_{i}}}\lesssim%
%TCIMACRO{\dprod \limits_{i=1}^{m}}%
%BeginExpansion
{\displaystyle \prod \limits_{i=1}^{m}}
%EndExpansion
\Vert b_{i}\Vert_{\ast}\left \Vert f_{i}\right \Vert _{VM_{p_{i},\varphi_{i}}},
\]%
\[
\left \Vert M_{\overrightarrow{b}}^{\left(  m\right)  }\left(  \overrightarrow
{f}\right)  \right \Vert _{VM_{p,\varphi}}\lesssim \left \Vert \overrightarrow
{b}\right \Vert _{\mathfrak{BMO}}\left \Vert f_{i}\right \Vert _{VM_{p_{i}%
,\varphi_{i}}}\lesssim%
%TCIMACRO{\dprod \limits_{i=1}^{m}}%
%BeginExpansion
{\displaystyle \prod \limits_{i=1}^{m}}
%EndExpansion
\Vert b_{i}\Vert_{\ast}\left \Vert f_{i}\right \Vert _{VM_{p_{i},\varphi_{i}}}.
\]

\end{corollary}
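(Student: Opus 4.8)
The plan is to read this corollary off as the endpoint $\alpha=0$ of Theorem \ref{teo2}. When $\alpha=0$ one has $q=p$, and since Theorem \ref{teo2} prescribes $\frac1q=\sum_{i=1}^m\frac1{p_i}+\sum_{i=1}^m\frac1{q_i}-\frac\alpha n$, the exponent $n\bigl(\frac1q-\sum_{i=1}^m\frac1{q_i}\bigr)+1$ equals $\sum_{i=1}^m\frac n{p_i}+1=\frac np+1$; hence conditions (\ref{2})--(\ref{3}), (\ref{10*}), (\ref{11*}) and (\ref{12*}) collapse exactly to the four hypotheses stated here. The operators $T_{\overrightarrow{b}}^{(m)}$ and $M_{\overrightarrow{b}}^{(m)}$ take the place of $I_{\alpha,\overrightarrow{b}}^{(m)}$ and $M_{\alpha,\overrightarrow{b}}^{(m)}$. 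Only two ingredients of the proof of Theorem \ref{teo2} must be changed: (i) the explicit fractional kernel $|(x-y_1,\dots,x-y_m)|^{-mn+\alpha}$, which is now a Calder\'on--Zygmund kernel obeying the size bound $|K(x,y_1,\dots,y_m)|\le C|(x-y_1,\dots,x-y_m)|^{-mn}$ (and for $M_{\overrightarrow{b}}^{(m)}$ one uses the pointwise comparison $|B(x,t)|^{-m}\lesssim|(x-y_1,\dots,x-y_m)|^{-mn}$ on $\bigl(B(x,t)\bigr)^m$, in analogy with (\ref{4})); and (ii) the Hardy--Littlewood--Sobolev-type bound of Lemma \ref{Lemma0}, which is replaced by the product bound $L_{p_1}\times\cdots\times L_{p_m}\to L_p$ for $T_{\overrightarrow{b}}^{(m)}$ and $M_{\overrightarrow{b}}^{(m)}$ stated in the Corollary giving the strong bounds of $T_{\overrightarrow{b}}^{(m)}$ and $M_{\overrightarrow{b}}^{(m)}$. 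With these substitutions the argument for Theorem \ref{teo2} goes through unchanged.

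\textbf{Step 1 (local estimate).} Fix a ball $B_0=B(x_0,r)$ and decompose each $f_i=f_i^0+f_i^\infty$ with $f_i^0=f_i\chi_{2B_0}$. Multiplying out the product, the term built only from the pieces $f_i^0$ is estimated on $B_0$ by the strong bound in (ii): $\lesssim\prod_{i=1}^m\|b_i\|_\ast\,\|f_i\|_{L_{p_i}(2B_0)}$. Each of the remaining $2^m-1$ terms contains at least one tail factor $f_j^\infty$; for $y_j\in(2B_0)^c$ one splits $b_j(x)-b_j(y_j)=\bigl(b_j(x)-(b_j)_{B_0}\bigr)+\bigl((b_j)_{B_0}-b_j(y_j)\bigr)$, decomposes $(2B_0)^c$ into the dyadic shells $B(x_0,2^{k+1}r)\setminus B(x_0,2^kr)$, and combines the kernel size bound with H\"older's inequality, the $BMO$ bounds (\ref{1*}), (\ref{3*}), (\ref{c}) and the logarithmic growth (\ref{5.2}). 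Summing the geometric-in-$k$ series and returning to a continuous integral gives
\[
\bigl\|T_{\overrightarrow{b}}^{(m)}(\overrightarrow{f})\bigr\|_{L_p(B_0)}\lesssim\prod_{i=1}^m\|b_i\|_\ast\;|B_0|^{\frac1p}\int_{2r}^{\infty}\Bigl(1+\ln\frac tr\Bigr)^m\prod_{i=1}^m\|f_i\|_{L_{p_i}(B(x_0,t))}\,\frac{dt}{t^{\frac np+1}},
\]
and the same estimate for $M_{\overrightarrow{b}}^{(m)}$.

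\textbf{Step 2 (local estimate to $VM$-boundedness).} Dividing by $\varphi(x_0,r)$ and writing $\|f_i\|_{L_{p_i}(B(x_0,t))}=\mathfrak{M}_{p_i,\varphi_i}(f_i;x_0,t)\,\varphi_i(x_0,t)\,|B(x_0,t)|^{1/p_i}$, Step 1 yields
\[
\mathfrak{M}_{p,\varphi}\bigl(T_{\overrightarrow{b}}^{(m)}\overrightarrow{f};x_0,r\bigr)\lesssim\frac{\prod_{i=1}^m\|b_i\|_\ast}{\varphi(x_0,r)}\int_r^{\infty}\Bigl(1+\ln\frac tr\Bigr)^m\Bigl(\prod_{i=1}^m\mathfrak{M}_{p_i,\varphi_i}(f_i;x_0,t)\Bigr)\prod_{i=1}^m\varphi_i(x_0,t)\,\frac{dt}{t}.
\]
Bounding $\prod_i\mathfrak{M}_{p_i,\varphi_i}(f_i;x_0,t)\le\prod_i\|f_i\|_{M_{p_i,\varphi_i}}$ and using the integral hypothesis on $(\varphi_1,\dots,\varphi_m,\varphi)$ recovers the $M_{p_1,\varphi_1}\times\cdots\times M_{p_m,\varphi_m}\to M_{p,\varphi}$ bound (the Corollary following Theorem \ref{teo1}). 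For the vanishing property, fix $\varepsilon>0$; since each $f_i\in VM_{p_i,\varphi_i}$ there is $\delta>0$ with $\sup_x\mathfrak{M}_{p_i,\varphi_i}(f_i;x,t)<\varepsilon$ for all $0<t<\delta$ and all $i$. Split $\int_r^{\infty}=\int_r^{\delta}+\int_\delta^{\infty}$. On $(r,\delta)$ the product of the $\mathfrak{M}_{p_i,\varphi_i}$'s is $<\varepsilon^m$, so by the integral hypothesis this part is $\lesssim\varepsilon^m\prod_i\|b_i\|_\ast$, uniformly in $x_0$ and $r$. On $(\delta,\infty)$ one uses $\bigl(1+\ln\frac tr\bigr)^m\le C_\delta\bigl(1+\ln|t|\bigr)^m\bigl(1+\ln\frac1r\bigr)^m$, the finiteness of $c_\delta$ from (\ref{12*}) at $\alpha=0$, and $\varphi(x_0,r)\ge\inf_x\varphi(x,r)$, to bound this part by $\lesssim\prod_i\|f_i\|_{M_{p_i,\varphi_i}}\,c_\delta\,\dfrac{(1+\ln\frac1r)^m}{\inf_x\varphi(x,r)}$, which tends to $0$ as $r\to0$ by (\ref{2}) and (\ref{11*}). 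Taking $\limsup_{r\to0}\sup_{x_0}$ and then letting $\varepsilon\to0$ gives $T_{\overrightarrow{b}}^{(m)}\overrightarrow{f}\in VM_{p,\varphi}$; the argument for $M_{\overrightarrow{b}}^{(m)}$ is identical.

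\textbf{Main obstacle.} The substantive work is Step 1: each of the $2^m-1$ mixed terms of the commutator forces one to peel off, on one hand, the factor $b_j(x)-(b_j)_{B_0}$ (controlled on $B_0$ by (\ref{c})) and, on the other, the factor $(b_j)_{B_0}-b_j(y_j)$ on each dyadic shell (controlled by (\ref{1*}), (\ref{3*}) and the logarithm (\ref{5.2})), and one must verify that the $m$ simultaneous $BMO$ subtractions assemble to exactly the weight $(1+\ln\frac tr)^m$ appearing in (\ref{10*}), while the kernel decay $2^{-kn}$ still dominates the polynomial-in-$k$ loss so that the dyadic series converges. A lighter technical point is the $\varepsilon$--$\delta$ bookkeeping in Step 2, where the extra factor $(1+\ln\frac1r)^m$ on the far tail must be absorbed by the growth of $\inf_x\varphi(x,r)$ guaranteed by (\ref{2}) and (\ref{11*}).
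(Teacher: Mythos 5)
Your proposal is correct and follows essentially the same route as the paper, which states this corollary without a separate proof, relying on exactly the substitutions you describe: the $\alpha=0$ specialization of the exponents, the Calder\'on--Zygmund kernel size bound in place of the fractional kernel in the analogue of Lemma \ref{Lemma}, the strong $L_{p_1}\times\cdots\times L_{p_m}\to L_p$ bounds in place of Lemma \ref{Lemma0}, and the pointwise domination of $M^{(m)}_{\overrightarrow{b}}$ analogous to (\ref{4}). Your Step 2 reproduces the paper's proof of Theorem \ref{teo2} (splitting the integral at $\delta_0$, using the vanishing of the $\mathfrak{M}_{p_i,\varphi_i}$'s on the near range and $c_{\delta}$ together with (\ref{2}) and (\ref{11*}) on the far range), so no further comment is needed.
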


The article is organized as follows. A key lemma is given and proved in
Section 2. Section 3 will be devoted to the proofs of the theorems (Theorems
\ref{teo1} and \ref{teo2}) stated above.

\section{A Key Lemma}

In order to prove the main results (Theorems \ref{teo1} and \ref{teo2}), we
need the following lemma.

\begin{lemma}
\label{Lemma}Let $x_{0}\in{\mathbb{R}^{n}}$, $0<\alpha<mn$ and $1\leq
p_{i}<\frac{mn}{\alpha}$ with $\frac{1}{p}=%
%TCIMACRO{\dsum \limits_{i=1}^{m}}%
%BeginExpansion
{\displaystyle \sum \limits_{i=1}^{m}}
%EndExpansion
\frac{1}{p_{i}}$, $\frac{1}{q}=\sum \limits_{i=1}^{m}\frac{1}{p_{i}}%
+\sum \limits_{i=1}^{m}\frac{1}{q_{i}}-\frac{\alpha}{n}$ and $\overrightarrow
{b}\in \left(  BMO\right)  ^{m}({\mathbb{R}^{n}})$ for $i=1,\ldots,m$. Then the
inequality%
\begin{equation}
\Vert I_{\alpha,\overrightarrow{b}}^{\left(  m\right)  }\left(
\overrightarrow{f}\right)  \Vert_{L_{q}(B\left(  x_{0},r\right)  )}\lesssim%
%TCIMACRO{\dprod \limits_{i=1}^{m}}%
%BeginExpansion
{\displaystyle \prod \limits_{i=1}^{m}}
%EndExpansion
\Vert b_{i}\Vert_{\ast}r^{\frac{n}{q}}\int \limits_{2r}^{\infty}\left(
1+\ln \frac{t}{r}\right)  ^{m}%
%TCIMACRO{\dprod \limits_{i=1}^{m}}%
%BeginExpansion
{\displaystyle \prod \limits_{i=1}^{m}}
%EndExpansion
\Vert f_{i}\Vert_{L_{p_{i}}(B\left(  x_{0},t\right)  )}\frac{dt}{t^{n\left(
\frac{1}{q}-%
%TCIMACRO{\dsum \limits_{i=1}^{m}}%
%BeginExpansion
{\displaystyle \sum \limits_{i=1}^{m}}
%EndExpansion
\frac{1}{q_{i}}\right)  +1}} \label{200}%
\end{equation}
holds for any ball $B(x_{0},r)$ and for all $\overrightarrow{f}\in L_{p_{1}%
}^{loc}\left(  {\mathbb{R}^{n}}\right)  \times \cdots \times L_{p_{m}}%
^{loc}\left(  {\mathbb{R}^{n}}\right)  $.
\end{lemma}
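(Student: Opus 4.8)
The plan is to estimate the local $L_q$-norm of the commutator by splitting each $f_i$ into a part supported on $2B = B(x_0,2r)$ and a part supported away from it. First I would write $f_i = f_i^0 + f_i^\infty$ with $f_i^0 = f_i \chi_{2B}$ and $f_i^\infty = f_i \chi_{(2B)^c}$, and then expand $I_{\alpha,\overrightarrow{b}}^{(m)}(\overrightarrow{f})$ multilinearly into $2^m$ terms, one for each choice of $f_i^0$ or $f_i^\infty$ in the $i$-th slot. One also expands the commutator structure: using that $b_i(x) - b_i(y_i) = (b_i(x) - (b_i)_{B}) - (b_i(y_i) - (b_i)_{B})$, the factor in slot $i$ is rewritten so that the constant $(b_i)_B$ can be pulled outside; this produces a finite sum of terms, each of which is (a product of $L^\infty$-type $\|b_i - (b_i)_B\|$-weighted pieces times) an application of a fractional-integral-type operator with some of the $f_i$'s replaced by $(b_i - (b_i)_B) f_i$. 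The ``all-local'' term, where every slot uses $f_i^0$, is handled by the global strong-type bound: by Lemma \ref{Lemma0} (and Corollary \ref{Corollary0*} for the maximal version) together with the $BMO$ control on $b_i - (b_i)_{2B}$ from \eqref{3*}, one gets $\|\cdots\|_{L_q(B)} \lesssim \prod_i \|b_i\|_* \prod_i \|f_i\|_{L_{p_i}(2B)}$, and this is dominated by the right-hand side of \eqref{200} because $\|f_i\|_{L_{p_i}(2B)} \le \|f_i\|_{L_{p_i}(B(x_0,t))}$ for $t \ge 2r$ and the integral $\int_{2r}^\infty t^{-n(1/q - \sum 1/q_i) - 1}\,dt$ converges (this is where the exponent relation $1/q = \sum 1/p_i + \sum 1/q_i - \alpha/n$ and $p_i < mn/\alpha$ enter, making the power negative).

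For the remaining terms, at least one slot uses $f_i^\infty$, and here the key pointwise estimate is on the kernel: for $x \in B(x_0,r)$ and $y_i \in (2B)^c$ for the ``infinity'' slots, one has $|(x - y_1, \ldots, x - y_m)| \approx \sum_{i \in S_\infty} |x_0 - y_i|$ where $S_\infty$ is the set of infinity-slots, so the kernel is bounded by $C\big(\sum_{i} |x_0 - y_i|\big)^{-mn+\alpha}$. Then I would decompose the infinity-region dyadically: $(2B)^c \subset \bigcup_{j\ge 1} (B(x_0, 2^{j+1}r) \setminus B(x_0, 2^j r))$, write the integral over each annulus, and on the $j$-th annulus bound $|x_0 - y_i|^{-(mn-\alpha)/\#}$-type factors by powers of $2^j r$. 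Applying Hölder's inequality in each slot with exponents $p_i$ (and conjugate $p_i'$ for the ball volume, plus an extra Hölder split to accommodate the $\|b_i - (b_i)_{2^{j+1}B}\|_{L_{q_i}}$ factors where the commutator acts), and then using \eqref{1*}--\eqref{3*} to control $\|b_i - (b_i)_{2^j B}\|_{L_{q_i}(2^{j+1}B)} \lesssim |2^j B|^{1/q_i}\|b_i\|_*(1 + \ln 2^j)$, one obtains for each fixed family of choices a bound of the form
\[
\prod_{i=1}^m \|b_i\|_* \, r^{n/q} \sum_{j\ge 1} (1 + j)^{m} \, (2^j r)^{n\sum 1/q_i - n/q} \prod_{i=1}^m \|f_i\|_{L_{p_i}(B(x_0, 2^{j+1} r))}.
\]
Finally, recognizing the dyadic sum $\sum_j (1+j)^m (2^j r)^{\cdots} \|f_i\|_{L_{p_i}(B(x_0,2^{j+1}r))}$ as comparable to the integral $\int_{2r}^\infty (1 + \ln \frac{t}{r})^m \prod_i \|f_i\|_{L_{p_i}(B(x_0,t))} \, t^{-n(1/q - \sum 1/q_i) - 1}\, dt$ — since $\|f_i\|_{L_{p_i}(B(x_0,t))}$ is monotone in $t$ and $1 + \ln(2^{j+1}r / r) \approx 1 + j$ — completes the bound \eqref{200}.

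I expect the main obstacle to be bookkeeping the combined expansion: the two simultaneous decompositions (local/global splitting of the $f_i$, and the $BMO$-mean-subtraction expansion of the commutator symbol) together produce many cross terms, and for each one must correctly track (i) which slots carry the $(b_i - (b_i)_B)$ factor and hence require an extra Hölder exponent, (ii) the correct reference ball ($2B$ versus $2^{j+1}B$) for the mean value of $b_i$, and (iii) the logarithmic growth $(1 + \ln(t/r))$ which here appears to the power $m$ because up to $m$ slots can simultaneously contribute a $(1 + \ln 2^j)$ factor. The fractional-integral version is the substantive case; the maximal-operator bound for $M_{\alpha,\overrightarrow{b}}^{(m)}$ then follows by the pointwise domination $M_{\alpha,\overrightarrow{b}}^{(m)}(\overrightarrow f) \lesssim \widetilde{I}_{\overrightarrow b, \alpha}^{(m)}(|\overrightarrow f|)$ from \eqref{4}, applied with $|b_i(x) - b_i(y_i)|$ in place of $[b_i(x) - b_i(y_i)]$ throughout, so essentially the same estimate goes through verbatim.
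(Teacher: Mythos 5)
Your proposal follows the paper's proof essentially verbatim (the paper writes it out in detail for $m=2$): the same decomposition $f_i = f_i^0 + f_i^\infty$ over $2B$, the same mean-subtraction expansion of the commutator symbol producing $2^m\times 2^m$ cross terms, the same use of Lemma~\ref{Lemma0} for the all-local term followed by the trick of introducing a trivially convergent integral $\int_{2r}^\infty t^{-n/q-1}\,dt$, and the same pointwise kernel lower bounds, dyadic-annulus decomposition of $(2B)^c$, and H\"older-plus-John--Nirenberg control of the $b_i$-factors, ending with the identification of the dyadic sum with the integral on the right of \eqref{200}. Your closing reduction of the maximal commutator to $\widetilde{I}_{\overrightarrow{b},\alpha}^{(m)}$ via \eqref{4} also matches the paper, though that step belongs to the proofs of Theorems~\ref{teo1}--\ref{teo2} rather than Lemma~\ref{Lemma} itself.
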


\begin{proof}
In order to simplify the proof, we consider only the situation when $m=2$.
Actually, a similar procedure works for all $m\in%
%TCIMACRO{\U{2115} }%
%BeginExpansion
\mathbb{N}
%EndExpansion
$. Thus, without loss of generality, it is sufficient to show that the
conclusion holds for $I_{\alpha,\overrightarrow{b}}^{\left(  2\right)
}\left(  \overrightarrow{f}\right)  =I_{\alpha,\left(  b_{1},b_{2}\right)
}^{\left(  2\right)  }\left(  f_{1},f_{2}\right)  $.

We just consider the case $p_{i}>1$ for $i=1,2$. For any $x_{0}\in
{\mathbb{R}^{n}}$, set $B=B\left(  x_{0},r\right)  $ for the ball centered at
$x_{0}$ and of radius $r$ and $2B=B\left(  x_{0},2r\right)  $. Indeed, we also
decompose $f_{i}$ as $f_{i}\left(  y_{i}\right)  =$ $f_{i}\left(
y_{i}\right)  \chi_{2B}+f_{i}\left(  y_{i}\right)  \chi_{\left(  2B\right)
^{c}}$ for $i=1,2$. And, we write $f_{1}=f_{1}^{0}+f_{1}^{\infty}$ and
$f_{2}=f_{2}^{0}+f_{2}^{\infty}$, where \ $f_{i}^{0}=f_{i}\chi_{2B}$,
\ $f_{i}^{\infty}=f_{i}\chi_{\left(  2B\right)  ^{c}}$, for $i=1,2$. Thus, we
have%
\begin{align*}
\left \Vert I_{\alpha,\left(  b_{1},b_{2}\right)  }^{\left(  2\right)  }\left(
f_{1},f_{2}\right)  \right \Vert _{L_{q}\left(  B\left(  x_{0},r\right)
\right)  }  &  \leq \left \Vert I_{\alpha,\left(  b_{1},b_{2}\right)  }^{\left(
2\right)  }\left(  f_{1}^{0},f_{2}^{0}\right)  \right \Vert _{L_{q}\left(
B\left(  x_{0},r\right)  \right)  }+\left \Vert I_{\alpha,\left(  b_{1}%
,b_{2}\right)  }^{\left(  2\right)  }\left(  f_{1}^{0},f_{2}^{\infty}\right)
\right \Vert _{L_{q}\left(  B\left(  x_{0},r\right)  \right)  }\\
&  +\left \Vert I_{\alpha,\left(  b_{1},b_{2}\right)  }^{\left(  2\right)
}\left(  f_{1}^{\infty},f_{2}^{0}\right)  \right \Vert _{L_{q}\left(  B\left(
x_{0},r\right)  \right)  }+\left \Vert I_{\alpha,\left(  b_{1},b_{2}\right)
}^{\left(  2\right)  }\left(  f_{1}^{\infty},f_{2}^{\infty}\right)
\right \Vert _{L_{q}\left(  B\left(  x_{0},r\right)  \right)  }\\
&  =F_{1}+F_{2}+F_{3}+F_{4}.
\end{align*}

Firstly, we use the boundedness of $I_{\alpha,\left(  b_{1},b_{2}\right)
}^{\left(  2\right)  }$ from $L_{p_{1}}\times L_{p_{2}}$ into $L_{q}$ (see
Lemma \ref{Lemma0}) to estimate $F_{1}$, and we obtain%
\begin{align*}
F_{1}  &  =\left \Vert I_{\alpha,\left(  b_{1},b_{2}\right)  }^{\left(
2\right)  }\left(  f_{1}^{0},f_{2}^{0}\right)  \right \Vert _{L_{q}\left(
B\left(  x_{0},r\right)  \right)  }\lesssim%
%TCIMACRO{\dprod \limits_{i=1}^{2}}%
%BeginExpansion
{\displaystyle \prod \limits_{i=1}^{2}}
%EndExpansion
\Vert b_{i}\Vert_{\ast}\left \Vert f_{i}\right \Vert _{L_{p_{i}}\left(
2B\right)  }\\
&  \lesssim r^{\frac{n}{q}}%
%TCIMACRO{\dprod \limits_{i=1}^{2}}%
%BeginExpansion
{\displaystyle \prod \limits_{i=1}^{2}}
%EndExpansion
\Vert b_{i}\Vert_{\ast}\left \Vert f_{i}\right \Vert _{L_{p_{i}}\left(
2B\right)  }\int \limits_{2r}^{\infty}\frac{dt}{t^{\frac{n}{q}+1}}\\
&  \lesssim%
%TCIMACRO{\dprod \limits_{i=1}^{2}}%
%BeginExpansion
{\displaystyle \prod \limits_{i=1}^{2}}
%EndExpansion
\Vert b_{i}\Vert_{\ast}r^{\frac{n}{q}}\int \limits_{2r}^{\infty}\prod
\limits_{i=1}^{2}\left \Vert f_{i}\right \Vert _{L_{p_{i}}\left(  B\left(
x_{0},t\right)  \right)  }\frac{dt}{t^{\frac{n}{q}+1}}\\
&  \lesssim%
%TCIMACRO{\dprod \limits_{i=1}^{2}}%
%BeginExpansion
{\displaystyle \prod \limits_{i=1}^{2}}
%EndExpansion
\Vert b_{i}\Vert_{\ast}r^{\frac{n}{q}}\int \limits_{2r}^{\infty}\left(
1+\ln \frac{t}{r}\right)  ^{2}%
%TCIMACRO{\dprod \limits_{i=1}^{2}}%
%BeginExpansion
{\displaystyle \prod \limits_{i=1}^{2}}
%EndExpansion
\Vert f_{i}\Vert_{L_{p_{i}}(B\left(  x_{0},t\right)  )}\frac{dt}{t^{n\left(
\frac{1}{q}-\left(  \frac{1}{q_{1}}+\frac{1}{q_{2}}\right)  \right)  +1}}.
\end{align*}

Secondly, for $F_{2}=\left \Vert I_{\alpha,\left(  b_{1},b_{2}\right)
}^{\left(  2\right)  }\left(  f_{1}^{0},f_{2}^{\infty}\right)  \right \Vert
_{L_{q}\left(  B\left(  x_{0},r\right)  \right)  }$, we decompose it into four
parts as follows:%
\begin{align*}
F_{2}  &  \lesssim \left \Vert \left[  \left(  b_{1}-\left \{  b_{1}\right \}
_{B}\right)  \right]  \left[  \left(  b_{2}-\left \{  b_{2}\right \}
_{B}\right)  \right]  I_{\alpha}^{\left(  2\right)  }\left(  f_{1}^{0}%
,f_{2}^{\infty}\right)  \right \Vert _{L_{q}\left(  B\left(  x_{0},r\right)
\right)  }\\
&  +\left \Vert \left[  \left(  b_{1}-\left \{  b_{1}\right \}  _{B}\right)
\right]  I_{\alpha}^{\left(  2\right)  }\left[  f_{1}^{0},\left(
b_{2}-\left \{  b_{2}\right \}  _{B}\right)  f_{2}^{\infty}\right]  \right \Vert
_{L_{q}\left(  B\left(  x_{0},r\right)  \right)  }\\
&  +\left \Vert \left[  \left(  b_{2}-\left \{  b_{2}\right \}  _{B}\right)
\right]  I_{\alpha}^{\left(  2\right)  }\left[  \left(  b_{1}-\left \{
b_{1}\right \}  _{B}\right)  f_{1}^{0},f_{2}^{\infty}\right]  \right \Vert
_{L_{q}\left(  B\left(  x_{0},r\right)  \right)  }\\
&  +\left \Vert I_{\alpha}^{\left(  2\right)  }\left[  \left(  b_{1}-\left \{
b_{1}\right \}  _{B}\right)  f_{1}^{0},\left(  b_{2}-\left \{  b_{2}\right \}
_{B}\right)  f_{2}^{\infty}\right]  \right \Vert _{L_{q}\left(  B\left(
x_{0},r\right)  \right)  }\\
&  \equiv F_{21}+F_{22}+F_{23}+F_{24}.
\end{align*}
Let $1<p_{1},p_{2}<\frac{2n}{\alpha}$, such that $\frac{1}{\overline{p}}%
=\frac{1}{p_{1}}+\frac{1}{p_{2}}$, $\frac{1}{\overline{q}}=\frac{1}%
{\overline{p}}-\frac{\alpha}{n}$, $\frac{1}{\overline{r}}=\frac{1}{q_{1}%
}+\frac{1}{q_{2}}$ and $\frac{1}{q}=\frac{1}{\overline{r}}+\frac{1}%
{\overline{q}}$. Then, using H\"{o}lder's inequality and by (\ref{1*}) we have%
\begin{align*}
F_{21}  &  =\left \Vert \left(  b_{1}-\left(  b_{1}\right)  _{B}\right)
\left(  b_{2}-\left(  b_{2}\right)  _{B}\right)  I_{\alpha}^{\left(  2\right)
}\left(  f_{1}^{0},f_{2}^{\infty}\right)  \right \Vert _{L_{q}\left(  B\left(
x_{0},r\right)  \right)  }\\
&  \lesssim \left \Vert \left(  b_{1}-\left(  b_{1}\right)  _{B}\right)  \left(
b_{2}-\left(  b_{2}\right)  _{B}\right)  \right \Vert _{L_{\overline{r}}\left(
B\left(  x_{0},r\right)  \right)  }\left \Vert I_{\alpha}^{\left(  2\right)
}\left(  f_{1}^{0},f_{2}^{\infty}\right)  \right \Vert _{L_{\overline{q}%
}\left(  B\left(  x_{0},r\right)  \right)  }\\
&  \lesssim \left \Vert b_{1}-\left(  b_{1}\right)  _{B}\right \Vert _{L_{q_{1}%
}\left(  B\left(  x_{0},r\right)  \right)  }\left \Vert b_{2}-\left(
b_{2}\right)  _{B}\right \Vert _{L_{q_{2}}\left(  B\left(  x_{0},r\right)
\right)  }\\
&  \times r^{\frac{n}{\overline{q}}}\int \limits_{2r}^{\infty}\left(
1+\ln \frac{t}{r}\right)  ^{2}\prod \limits_{i=1}^{2}\left \Vert f_{i}\right \Vert
_{L_{p_{i}}\left(  B\left(  x_{0},t\right)  \right)  }\frac{dt}{t^{\frac
{n}{\overline{q}}+1}}\\
&  \lesssim%
%TCIMACRO{\dprod \limits_{i=1}^{2}}%
%BeginExpansion
{\displaystyle \prod \limits_{i=1}^{2}}
%EndExpansion
\Vert b_{i}\Vert_{\ast}|B(x_{0},r)|^{\frac{1}{q_{1}}+\frac{1}{q_{2}}}%
r^{\frac{n}{\overline{q}}}\int \limits_{2r}^{\infty}\left(  1+\ln \frac{t}%
{r}\right)  ^{2}\prod \limits_{i=1}^{2}\left \Vert f_{i}\right \Vert _{L_{p_{i}%
}\left(  B\left(  x_{0},t\right)  \right)  }\frac{dt}{t^{\frac{n}{\overline
{q}}+1}}\\
&  \lesssim%
%TCIMACRO{\dprod \limits_{i=1}^{2}}%
%BeginExpansion
{\displaystyle \prod \limits_{i=1}^{2}}
%EndExpansion
\Vert b_{i}\Vert_{\ast}r^{n\left(  \frac{1}{q_{1}}+\frac{1}{q_{2}}\right)
}r^{n\left(  \frac{1}{p_{1}}+\frac{1}{p_{2}}-\frac{\alpha}{n}\right)  }%
\int \limits_{2r}^{\infty}\left(  1+\ln \frac{t}{r}\right)  ^{2}\prod
\limits_{i=1}^{2}\left \Vert f_{i}\right \Vert _{L_{p_{i}}\left(  B\left(
x_{0},t\right)  \right)  }t^{n\left(  \frac{1}{q_{1}}+\frac{1}{q_{2}}\right)
}\frac{dt}{t^{\frac{n}{q}+1}}\\
&  =%
%TCIMACRO{\dprod \limits_{i=1}^{2}}%
%BeginExpansion
{\displaystyle \prod \limits_{i=1}^{2}}
%EndExpansion
\Vert b_{i}\Vert_{\ast}r^{\frac{n}{q}}\int \limits_{2r}^{\infty}\left(
1+\ln \frac{t}{r}\right)  ^{2}%
%TCIMACRO{\dprod \limits_{i=1}^{2}}%
%BeginExpansion
{\displaystyle \prod \limits_{i=1}^{2}}
%EndExpansion
\Vert f_{i}\Vert_{L_{p_{i}}(B\left(  x_{0},t\right)  )}\frac{dt}{t^{n\left(
\frac{1}{q}-\left(  \frac{1}{q_{1}}+\frac{1}{q_{2}}\right)  \right)  +1}},
\end{align*}
where in the second inequality we have used the following fact:

It is clear that $\left \vert \left(  x_{0}-y_{1},\text{ }x_{0}-y_{2}\right)
\right \vert ^{2n-\alpha}\geq \left \vert x_{0}-y_{2}\right \vert ^{2n-\alpha}$.
By H\"{o}lder's inequality, we have%
\begin{align*}
\left \vert I_{\alpha}^{\left(  2\right)  }\left(  f_{1}^{0},f_{2}^{\infty
}\right)  \left(  x\right)  \right \vert  &  \lesssim%
%TCIMACRO{\dint \limits_{{\mathbb{R}^{n}}}}%
%BeginExpansion
{\displaystyle \int \limits_{{\mathbb{R}^{n}}}}
%EndExpansion%
%TCIMACRO{\dint \limits_{{\mathbb{R}^{n}}}}%
%BeginExpansion
{\displaystyle \int \limits_{{\mathbb{R}^{n}}}}
%EndExpansion
\frac{\left \vert f_{1}^{0}\left(  y_{1}\right)  \right \vert \left \vert
f_{2}^{\infty}\left(  y_{2}\right)  \right \vert }{\left \vert \left(
x-y_{1},x-y_{2}\right)  \right \vert ^{2n-\alpha}}dy_{1}dy_{2}\\
&  \lesssim \int \limits_{2B}\left \vert f_{1}\left(  y_{1}\right)  \right \vert
dy_{1}\int \limits_{\left(  2B\right)  ^{c}}\frac{\left \vert f_{2}\left(
y_{2}\right)  \right \vert }{\left \vert x_{0}-y_{2}\right \vert ^{2n-\alpha}%
}dy_{2}\\
&  \approx \int \limits_{2B}\left \vert f_{1}\left(  y_{1}\right)  \right \vert
dy_{1}\int \limits_{\left(  2B\right)  ^{c}}\left \vert f_{2}\left(
y_{2}\right)  \right \vert \int \limits_{\left \vert x_{0}-y_{2}\right \vert
}^{\infty}\frac{dt}{t^{2n-\alpha+1}}dy_{2}\\
&  \lesssim \left \Vert f_{1}\right \Vert _{L_{p_{1}}\left(  2B\right)
}\left \vert 2B\right \vert ^{1-\frac{1}{p_{1}}}%
%TCIMACRO{\dint \limits_{2r}^{\infty}}%
%BeginExpansion
{\displaystyle \int \limits_{2r}^{\infty}}
%EndExpansion
\left \Vert f_{2}\right \Vert _{L_{p_{2}}\left(  B\left(  x_{0},t\right)
\right)  }\left \vert B\left(  x_{0},t\right)  \right \vert ^{1-\frac{1}{p_{2}}%
}\frac{dt}{t^{2n-\alpha+1}}\\
&  \lesssim \int \limits_{2r}^{\infty}\prod \limits_{i=1}^{2}\left \Vert
f_{i}\right \Vert _{L_{p_{i}}\left(  B\left(  x_{0},t\right)  \right)  }%
\frac{dt}{t^{\frac{n}{\overline{q}}+1}},
\end{align*}
where $\frac{1}{p}=\frac{1}{p_{1}}+\frac{1}{p_{2}}$. Thus, the inequality
\[
\left \Vert I_{\alpha}^{\left(  2\right)  }\left(  f_{1}^{0},f_{2}^{\infty
}\right)  \right \Vert _{L_{\overline{q}}\left(  B\left(  x_{0},r\right)
\right)  }\lesssim r^{\frac{n}{\overline{q}}}\int \limits_{2r}^{\infty}%
\prod \limits_{i=1}^{2}\left \Vert f_{i}\right \Vert _{L_{p_{i}}\left(  B\left(
x_{0},t\right)  \right)  }\frac{dt}{t^{\frac{n}{\overline{q}}+1}}%
\]
is valid.

On the other hand, for the estimates used in $F_{22}$, $F_{23}$, we have to
prove the below inequality:%
\begin{equation}
\left \vert I_{\alpha}^{\left(  2\right)  }\left[  f_{1}^{0},\left(
b_{2}\left(  \cdot \right)  -\left(  b_{2}\right)  _{B}\right)  f_{2}^{\infty
}\right]  \left(  x\right)  \right \vert \lesssim \Vert b_{2}\Vert_{\ast}%
\int \limits_{2r}^{\infty}\left(  1+\ln \frac{t}{r}\right)  ^{2}\prod
\limits_{i=1}^{2}\left \Vert f_{i}\right \Vert _{L_{p_{i}}\left(  B\left(
x_{0},t\right)  \right)  }\frac{dt}{t^{n\left(  \frac{1}{p_{1}}+\frac{1}%
{p_{2}}\right)  +1-\alpha}}. \label{F}%
\end{equation}
To estimate $F_{22}$, the following inequality%
\[
\left \vert I_{\alpha}^{\left(  2\right)  }\left[  f_{1}^{0},\left(
b_{2}\left(  \cdot \right)  -\left(  b_{2}\right)  _{B}\right)  f_{2}^{\infty
}\right]  \left(  x\right)  \right \vert \lesssim%
%TCIMACRO{\dint \limits_{2B}}%
%BeginExpansion
{\displaystyle \int \limits_{2B}}
%EndExpansion
\left \vert f_{1}\left(  y_{1}\right)  \right \vert dy_{1}%
%TCIMACRO{\dint \limits_{\left(  2B\right)  ^{c}}}%
%BeginExpansion
{\displaystyle \int \limits_{\left(  2B\right)  ^{c}}}
%EndExpansion
\frac{\left \vert b_{2}\left(  y_{2}\right)  -\left(  b_{2}\right)
_{B}\right \vert \left \vert f_{2}\left(  y_{2}\right)  \right \vert }{\left \vert
x_{0}-y_{2}\right \vert ^{2n-\alpha}}dy_{2}%
\]
is satisfied. It's obvious that%
\begin{equation}%
%TCIMACRO{\dint \limits_{2B}}%
%BeginExpansion
{\displaystyle \int \limits_{2B}}
%EndExpansion
\left \vert f_{1}\left(  y_{1}\right)  \right \vert dy_{1}\lesssim \left \Vert
f_{1}\right \Vert _{L_{p_{1}}\left(  2B\right)  }\left \vert 2B\right \vert
^{1-\frac{1}{p_{1}}}, \label{48}%
\end{equation}
and using H\"{o}lder's inequality and by (\ref{5.2}) and (\ref{c}) we have%
\begin{align}
&
%TCIMACRO{\dint \limits_{\left(  2B\right)  ^{c}}}%
%BeginExpansion
{\displaystyle \int \limits_{\left(  2B\right)  ^{c}}}
%EndExpansion
\frac{\left \vert b_{2}\left(  y_{2}\right)  -\left(  b_{2}\right)
_{B}\right \vert \left \vert f_{2}\left(  y_{2}\right)  \right \vert }{\left \vert
x_{0}-y_{2}\right \vert ^{2n-\alpha}}dy_{2}\nonumber \\
&  \lesssim%
%TCIMACRO{\dint \limits_{\left(  2B\right)  ^{c}}}%
%BeginExpansion
{\displaystyle \int \limits_{\left(  2B\right)  ^{c}}}
%EndExpansion
\left \vert b_{2}\left(  y_{2}\right)  -\left(  b_{2}\right)  _{B\left(
x_{0},r\right)  }\right \vert \left \vert f_{2}\left(  y_{2}\right)  \right \vert
\left[
%TCIMACRO{\dint \limits_{\left\vert x_{0}-y_{2}\right\vert }^{\infty}}%
%BeginExpansion
{\displaystyle \int \limits_{\left \vert x_{0}-y_{2}\right \vert }^{\infty}}
%EndExpansion
\frac{dt}{t^{2n-\alpha+1}}\right]  dy_{2}\nonumber \\
&  \lesssim%
%TCIMACRO{\dint \limits_{2r}^{\infty}}%
%BeginExpansion
{\displaystyle \int \limits_{2r}^{\infty}}
%EndExpansion
\left \Vert b_{2}\left(  y_{2}\right)  -\left(  b_{2}\right)  _{B\left(
x_{0},t\right)  }\right \Vert _{L_{q_{2}}\left(  B\left(  x_{0},t\right)
\right)  }\left \Vert f_{2}\right \Vert _{L_{p_{2}}\left(  B\left(
x_{0},t\right)  \right)  }\left \vert B\left(  x_{0},t\right)  \right \vert
^{1-\left(  \frac{1}{p_{2}}+\frac{1}{q_{2}}\right)  }\frac{dt}{t^{2n-\alpha
+1}}\nonumber \\
&  +%
%TCIMACRO{\dint \limits_{2r}^{\infty}}%
%BeginExpansion
{\displaystyle \int \limits_{2r}^{\infty}}
%EndExpansion
\left \vert \left(  b_{2}\right)  _{B\left(  x_{0},t\right)  }-\left(
b_{2}\right)  _{B\left(  x_{0},r\right)  }\right \vert \left \Vert
f_{2}\right \Vert _{L_{p_{2}}\left(  B\left(  x_{0},t\right)  \right)
}\left \vert B\left(  x_{0},t\right)  \right \vert ^{1-\frac{1}{p_{2}}}\frac
{dt}{t^{2n-\alpha+1}}\nonumber \\
&  \lesssim \Vert b_{2}\Vert_{\ast}%
%TCIMACRO{\dint \limits_{2r}^{\infty}}%
%BeginExpansion
{\displaystyle \int \limits_{2r}^{\infty}}
%EndExpansion
\left(  1+\ln \frac{t}{r}\right)  ^{2}\left \vert B\left(  x_{0},t\right)
\right \vert ^{\frac{1}{q_{2}}}\left \Vert f_{2}\right \Vert _{L_{p_{2}}\left(
B\left(  x_{0},t\right)  \right)  }\left \vert B\left(  x_{0},t\right)
\right \vert ^{1-\left(  \frac{1}{p_{2}}+\frac{1}{q_{2}}\right)  }\frac
{dt}{t^{2n-\alpha+1}}\nonumber \\
&  +\Vert b_{2}\Vert_{\ast}\int \limits_{2r}^{\infty}\left(  1+\ln \frac{t}%
{r}\right)  \left \vert B\left(  x_{0},t\right)  \right \vert \left \Vert
f_{2}\right \Vert _{L_{p_{2}}\left(  B\left(  x_{0},t\right)  \right)
}\left \vert B\left(  x_{0},t\right)  \right \vert ^{1-\frac{1}{p_{2}}}\frac
{dt}{t^{2n-\alpha+1}}\nonumber \\
&  \lesssim \Vert b_{2}\Vert_{\ast}\int \limits_{2r}^{\infty}\left(  1+\ln
\frac{t}{r}\right)  ^{2}\left \Vert f_{2}\right \Vert _{L_{p_{2}}\left(
B\left(  x_{0},t\right)  \right)  }\frac{dt}{t^{n\left(  1+\frac{1}{p_{2}%
}\right)  +1-\alpha}}. \label{49}%
\end{align}
Hence, by (\ref{48}) and (\ref{49}), it follows that:%
\begin{align*}
&  \left \vert I_{\alpha}^{\left(  2\right)  }\left[  f_{1}^{0},\left(
b_{2}\left(  \cdot \right)  -\left(  b_{2}\right)  _{B}\right)  f_{2}^{\infty
}\right]  \left(  x\right)  \right \vert \\
&  \lesssim \Vert b_{2}\Vert_{\ast}\left \Vert f_{1}\right \Vert _{L_{p_{1}%
}\left(  2B\right)  }\left \vert 2B\right \vert ^{1-\frac{1}{p_{1}}}%
\int \limits_{2r}^{\infty}\left(  1+\ln \frac{t}{r}\right)  ^{2}\left \Vert
f_{2}\right \Vert _{L_{p_{2}}\left(  B\left(  x_{0},t\right)  \right)  }%
\frac{dt}{t^{n\left(  1+\frac{1}{p_{2}}\right)  +1-\alpha}}\\
&  \lesssim \Vert b_{2}\Vert_{\ast}\int \limits_{2r}^{\infty}\left(  1+\ln
\frac{t}{r}\right)  ^{2}\prod \limits_{i=1}^{2}\left \Vert f_{i}\right \Vert
_{L_{p_{i}}\left(  B\left(  x_{0},t\right)  \right)  }\frac{dt}{t^{n\left(
\frac{1}{p_{1}}+\frac{1}{p_{2}}\right)  +1-\alpha}}.
\end{align*}
This completes the proof of inequality (\ref{F}). Thus, let $1<\tau<\infty$,
such that $\frac{1}{q}=\frac{1}{q_{1}}+\frac{1}{\tau}$. Then, using
H\"{o}lder's inequality and from (\ref{F}) and (\ref{c}), we get%
\begin{align*}
F_{22}  &  =\left \Vert \left[  \left(  b_{1}-\left \{  b_{1}\right \}
_{B}\right)  \right]  I_{\alpha}^{\left(  2\right)  }\left[  f_{1}^{0},\left(
b_{2}-\left \{  b_{2}\right \}  _{B}\right)  f_{2}^{\infty}\right]  \right \Vert
_{L_{q}\left(  B\left(  x_{0},r\right)  \right)  }\\
&  \lesssim \left \Vert b_{1}-\left(  b_{1}\right)  _{B}\right \Vert _{L_{q_{1}%
}\left(  B\right)  }\left \Vert I_{\alpha}^{\left(  2\right)  }\left[
f_{1}^{0},\left(  b_{2}-\left(  b_{2}\right)  _{B}\right)  f_{2}^{\infty
}\right]  \right \Vert _{L_{\tau}\left(  B\right)  }\\
&  \lesssim%
%TCIMACRO{\dprod \limits_{i=1}^{2}}%
%BeginExpansion
{\displaystyle \prod \limits_{i=1}^{2}}
%EndExpansion
\Vert b_{i}\Vert_{\ast}\left \vert B\left(  x_{0},r\right)  \right \vert
^{\frac{1}{q_{1}}+\frac{1}{\tau}}\\
&  \times \int \limits_{2r}^{\infty}\left(  1+\ln \frac{t}{r}\right)  ^{2}%
\prod \limits_{i=1}^{2}\left \Vert f_{i}\right \Vert _{L_{p_{i}}\left(  B\left(
x_{0},t\right)  \right)  }\frac{dt}{t^{n\left(  \frac{1}{p_{1}}+\frac{1}%
{p_{2}}\right)  +1-\alpha}}\\
&  \lesssim%
%TCIMACRO{\dprod \limits_{i=1}^{2}}%
%BeginExpansion
{\displaystyle \prod \limits_{i=1}^{2}}
%EndExpansion
\Vert b_{i}\Vert_{\ast}r^{\frac{n}{q}}\int \limits_{2r}^{\infty}\left(
1+\ln \frac{t}{r}\right)  ^{2}%
%TCIMACRO{\dprod \limits_{i=1}^{2}}%
%BeginExpansion
{\displaystyle \prod \limits_{i=1}^{2}}
%EndExpansion
\Vert f_{i}\Vert_{L_{p_{i}}(B\left(  x_{0},t\right)  )}\frac{dt}{t^{n\left(
\frac{1}{q}-\left(  \frac{1}{q_{1}}+\frac{1}{q_{2}}\right)  \right)  +1}}.
\end{align*}

Similarly, $F_{23}$ has the same estimate above, here we omit the details,
thus the inequality%
\begin{align*}
F_{23}  &  =\left \Vert \left[  \left(  b_{2}-\left \{  b_{2}\right \}
_{B}\right)  \right]  I_{\alpha}^{\left(  2\right)  }\left[  \left(
b_{1}-\left \{  b_{1}\right \}  _{B}\right)  f_{1}^{0},f_{2}^{\infty}\right]
\right \Vert _{L_{q}\left(  B\left(  x_{0},r\right)  \right)  }\\
&  \lesssim%
%TCIMACRO{\dprod \limits_{i=1}^{2}}%
%BeginExpansion
{\displaystyle \prod \limits_{i=1}^{2}}
%EndExpansion
\Vert b_{i}\Vert_{\ast}r^{\frac{n}{q}}\int \limits_{2r}^{\infty}\left(
1+\ln \frac{t}{r}\right)  ^{2}%
%TCIMACRO{\dprod \limits_{i=1}^{2}}%
%BeginExpansion
{\displaystyle \prod \limits_{i=1}^{2}}
%EndExpansion
\Vert f_{i}\Vert_{L_{p_{i}}(B\left(  x_{0},t\right)  )}\frac{dt}{t^{n\left(
\frac{1}{q}-\left(  \frac{1}{q_{1}}+\frac{1}{q_{2}}\right)  \right)  +1}}%
\end{align*}
is valid.

Now we turn to estimate $F_{24}$. Similar to (\ref{F}), we have to prove the
following estimate for $F_{24}$:%
\begin{equation}
\left \vert I_{\alpha}^{\left(  2\right)  }\left[  \left(  b_{1}-\left(
b_{1}\right)  _{B}\right)  f_{1}^{0},\left(  b_{2}-\left(  b_{2}\right)
_{B}\right)  f_{2}^{\infty}\right]  \left(  x\right)  \right \vert \leq%
%TCIMACRO{\dprod \limits_{i=1}^{2}}%
%BeginExpansion
{\displaystyle \prod \limits_{i=1}^{2}}
%EndExpansion
\Vert b_{i}\Vert_{\ast}\int \limits_{2r}^{\infty}\left(  1+\ln \frac{t}%
{r}\right)  ^{2}%
%TCIMACRO{\dprod \limits_{i=1}^{2}}%
%BeginExpansion
{\displaystyle \prod \limits_{i=1}^{2}}
%EndExpansion
\Vert f_{i}\Vert_{L_{p_{i}}(B\left(  x_{0},t\right)  )}\frac{dt}{t^{n\left(
\frac{1}{q}-\left(  \frac{1}{q_{1}}+\frac{1}{q_{2}}\right)  \right)  +1}}.
\label{G}%
\end{equation}

Firstly, the following inequality%
\[
\left \vert I_{\alpha}^{\left(  2\right)  }\left[  \left(  b_{1}-\left(
b_{1}\right)  _{B}\right)  f_{1}^{0},\left(  b_{2}-\left(  b_{2}\right)
_{B}\right)  f_{2}^{\infty}\right]  \left(  x\right)  \right \vert \lesssim%
%TCIMACRO{\dint \limits_{2B}}%
%BeginExpansion
{\displaystyle \int \limits_{2B}}
%EndExpansion
\left \vert b_{1}\left(  y_{1}\right)  -\left(  b_{1}\right)  _{B}\right \vert
\left \vert f_{1}\left(  y_{1}\right)  \right \vert dy_{1}%
%TCIMACRO{\dint \limits_{\left(  2B\right)  ^{c}}}%
%BeginExpansion
{\displaystyle \int \limits_{\left(  2B\right)  ^{c}}}
%EndExpansion
\frac{\left \vert b_{2}\left(  y_{2}\right)  -\left(  b_{2}\right)
_{B}\right \vert \left \vert f_{2}\left(  y_{2}\right)  \right \vert }{\left \vert
x_{0}-y_{2}\right \vert ^{2n-\alpha}}dy_{2}%
\]
is valid.

It's obvious that from H\"{o}lder's inequality and (\ref{c})
\begin{equation}%
%TCIMACRO{\dint \limits_{2B}}%
%BeginExpansion
{\displaystyle \int \limits_{2B}}
%EndExpansion
\left \vert b_{1}\left(  y_{1}\right)  -\left(  b_{1}\right)  _{B}\right \vert
\left \vert f_{1}\left(  y_{1}\right)  \right \vert dy_{1}\lesssim \Vert
b_{1}\Vert_{\ast}\left \vert B\left(  x_{0},r\right)  \right \vert ^{1-\frac
{1}{p_{1}}}\left \Vert f_{1}\right \Vert _{L_{p_{1}}\left(  2B\right)  }.
\label{410}%
\end{equation}
Then, by (\ref{49}) and (\ref{410}) we have%
\[
\left \vert I_{\alpha}^{\left(  2\right)  }\left[  \left(  b_{1}-\left(
b_{1}\right)  _{B}\right)  f_{1}^{0},\left(  b_{2}-\left(  b_{2}\right)
_{B}\right)  f_{2}^{\infty}\right]  \left(  x\right)  \right \vert \leq%
%TCIMACRO{\dprod \limits_{i=1}^{2}}%
%BeginExpansion
{\displaystyle \prod \limits_{i=1}^{2}}
%EndExpansion
\Vert b_{i}\Vert_{\ast}\int \limits_{2r}^{\infty}\left(  1+\ln \frac{t}%
{r}\right)  ^{2}%
%TCIMACRO{\dprod \limits_{i=1}^{2}}%
%BeginExpansion
{\displaystyle \prod \limits_{i=1}^{2}}
%EndExpansion
\Vert f_{i}\Vert_{L_{p_{i}}(B\left(  x_{0},t\right)  )}\frac{dt}{t^{n\left(
\frac{1}{q}-\left(  \frac{1}{q_{1}}+\frac{1}{q_{2}}\right)  \right)  +1}}.
\]
This completes the proof of inequality (\ref{G}). Therefore, by (\ref{G}) we
deduce that%
\begin{align*}
F_{24}  &  =\left \Vert I_{\alpha}^{\left(  2\right)  }\left[  \left(
b_{1}-\left(  b_{1}\right)  _{B}\right)  f_{1}^{0},\left(  b_{2}-\left(
b_{2}\right)  _{B}\right)  f_{2}^{\infty}\right]  \right \Vert _{L_{q}\left(
B\right)  }\\
&  \lesssim%
%TCIMACRO{\dprod \limits_{i=1}^{2}}%
%BeginExpansion
{\displaystyle \prod \limits_{i=1}^{2}}
%EndExpansion
\Vert b_{i}\Vert_{\ast}r^{\frac{n}{q}}\int \limits_{2r}^{\infty}\left(
1+\ln \frac{t}{r}\right)  ^{2}%
%TCIMACRO{\dprod \limits_{i=1}^{2}}%
%BeginExpansion
{\displaystyle \prod \limits_{i=1}^{2}}
%EndExpansion
\Vert f_{i}\Vert_{L_{p_{i}}(B\left(  x_{0},t\right)  )}\frac{dt}{t^{n\left(
\frac{1}{q}-\left(  \frac{1}{q_{1}}+\frac{1}{q_{2}}\right)  \right)  +1}}.
\end{align*}

Considering estimates $F_{21},$ $F_{22}$, $F_{23}$, $F_{24}$ together, we get
the desired conclusion%
\[
F_{2}=\left \Vert I_{\alpha,\left(  b_{1},b_{2}\right)  }^{\left(  2\right)
}\left(  f_{1}^{0},f_{2}^{\infty}\right)  \right \Vert _{L_{q}\left(  B\left(
x_{0},r\right)  \right)  }\lesssim%
%TCIMACRO{\dprod \limits_{i=1}^{2}}%
%BeginExpansion
{\displaystyle \prod \limits_{i=1}^{2}}
%EndExpansion
\Vert b_{i}\Vert_{\ast}r^{\frac{n}{q}}\int \limits_{2r}^{\infty}\left(
1+\ln \frac{t}{r}\right)  ^{2}%
%TCIMACRO{\dprod \limits_{i=1}^{2}}%
%BeginExpansion
{\displaystyle \prod \limits_{i=1}^{2}}
%EndExpansion
\Vert f_{i}\Vert_{L_{p_{i}}(B\left(  x_{0},t\right)  )}\frac{dt}{t^{n\left(
\frac{1}{q}-\left(  \frac{1}{q_{1}}+\frac{1}{q_{2}}\right)  \right)  +1}}.
\]

Similar to $F_{2}$, we can also get the estimates for $F_{3}$,
\[
F_{3}=\left \Vert I_{\alpha,\left(  b_{1},b_{2}\right)  }^{\left(  2\right)
}\left(  f_{1}^{\infty},f_{2}^{0}\right)  \right \Vert _{L_{q}\left(  B\left(
x_{0},r\right)  \right)  }\lesssim%
%TCIMACRO{\dprod \limits_{i=1}^{2}}%
%BeginExpansion
{\displaystyle \prod \limits_{i=1}^{2}}
%EndExpansion
\Vert b_{i}\Vert_{\ast}r^{\frac{n}{q}}\int \limits_{2r}^{\infty}\left(
1+\ln \frac{t}{r}\right)  ^{2}%
%TCIMACRO{\dprod \limits_{i=1}^{2}}%
%BeginExpansion
{\displaystyle \prod \limits_{i=1}^{2}}
%EndExpansion
\Vert f_{i}\Vert_{L_{p_{i}}(B\left(  x_{0},t\right)  )}\frac{dt}{t^{n\left(
\frac{1}{q}-\left(  \frac{1}{q_{1}}+\frac{1}{q_{2}}\right)  \right)  +1}}.
\]

At last, we consider the last term $F_{4}=\left \Vert I_{\alpha,\left(
b_{1},b_{2}\right)  }^{\left(  2\right)  }\left(  f_{1}^{\infty},f_{2}%
^{\infty}\right)  \right \Vert _{L_{q}\left(  B\left(  x_{0},r\right)  \right)
}$. We split $F_{4}$ in the following way:%
\[
F_{4}\lesssim F_{41}+F_{42}+F_{43}+F_{44},
\]
where%
\begin{align*}
F_{41}  &  =\left \Vert \left(  b_{1}-\left(  b_{1}\right)  _{B}\right)
\left(  b_{2}-\left(  b_{2}\right)  _{B}\right)  I_{\alpha}^{\left(  2\right)
}\left(  f_{1}^{\infty},f_{2}^{\infty}\right)  \right \Vert _{L_{q}\left(
B\right)  },\\
F_{42}  &  =\left \Vert \left(  b_{1}-\left(  b_{1}\right)  _{B}\right)
I_{\alpha}^{\left(  2\right)  }\left[  f_{1}^{\infty},\left(  b_{2}-\left(
b_{2}\right)  _{B}\right)  f_{2}^{\infty}\right]  \right \Vert _{L_{q}\left(
B\right)  },\\
F_{43}  &  =\left \Vert \left(  b_{2}-\left(  b_{2}\right)  _{B}\right)
I_{\alpha}^{\left(  2\right)  }\left[  \left(  b_{1}-\left(  b_{1}\right)
_{B}\right)  f_{1}^{\infty},f_{2}^{\infty}\right]  \right \Vert _{L_{q}\left(
B\right)  },\\
F_{44}  &  =\left \Vert I_{\alpha}^{\left(  2\right)  }\left[  \left(
b_{1}-\left(  b_{1}\right)  _{B}\right)  f_{1}^{\infty},\left(  b_{2}-\left(
b_{2}\right)  _{B}\right)  f_{2}^{\infty}\right]  \right \Vert _{L_{q}\left(
B\right)  }.
\end{align*}
Now, let us estimate $F_{41}$, $F_{42}$, $F_{43}$, $F_{44}$ respectively.

For the term $F_{41}$, let $1<\tau<\infty$, such that $\frac{1}{q}=\left(
\frac{1}{q_{1}}+\frac{1}{q_{2}}\right)  +\frac{1}{\tau}$, $\frac{1}{\tau
}=\frac{1}{p_{1}}+\frac{1}{p_{2}}-\frac{\alpha}{n}$. Then, by H\"{o}lder's
inequality and (\ref{1*}), we get%
\begin{align*}
F_{41}  &  =\left \Vert \left(  b_{1}-\left(  b_{1}\right)  _{B}\right)
\left(  b_{2}-\left(  b_{2}\right)  _{B}\right)  I_{\alpha}^{\left(  2\right)
}\left(  f_{1}^{\infty},f_{2}^{\infty}\right)  \right \Vert _{L_{q}\left(
B\right)  }\\
&  \lesssim \left \Vert b_{1}-\left(  b_{1}\right)  _{B}\right \Vert _{L_{q_{1}%
}\left(  B\right)  }\left \Vert b_{2}-\left(  b_{2}\right)  _{B}\right \Vert
_{L_{q_{2}}\left(  B\right)  }\left \Vert I_{\alpha}^{\left(  2\right)
}\left(  f_{1}^{\infty},f_{2}^{\infty}\right)  \right \Vert _{L_{\tau}\left(
B\right)  }\\
&  \lesssim%
%TCIMACRO{\dprod \limits_{i=1}^{2}}%
%BeginExpansion
{\displaystyle \prod \limits_{i=1}^{2}}
%EndExpansion
\Vert b_{i}\Vert_{\ast}\left \vert B\left(  x_{0},r\right)  \right \vert
^{\frac{1}{q_{1}}+\frac{1}{q_{2}}}r^{\frac{n}{\tau}}\int \limits_{2r}^{\infty
}\left(  1+\ln \frac{t}{r}\right)  ^{2}%
%TCIMACRO{\dprod \limits_{i=1}^{2}}%
%BeginExpansion
{\displaystyle \prod \limits_{i=1}^{2}}
%EndExpansion
\Vert f_{i}\Vert_{L_{p_{i}}(B\left(  x_{0},t\right)  )}\frac{dt}{t^{\frac
{n}{\tau}+1}}\\
&  \lesssim%
%TCIMACRO{\dprod \limits_{i=1}^{2}}%
%BeginExpansion
{\displaystyle \prod \limits_{i=1}^{2}}
%EndExpansion
\Vert b_{i}\Vert_{\ast}r^{\frac{n}{q}}\int \limits_{2r}^{\infty}\left(
1+\ln \frac{t}{r}\right)  ^{2}%
%TCIMACRO{\dprod \limits_{i=1}^{2}}%
%BeginExpansion
{\displaystyle \prod \limits_{i=1}^{2}}
%EndExpansion
\Vert f_{i}\Vert_{L_{p_{i}}(B\left(  x_{0},t\right)  )}\frac{dt}{t^{n\left(
\frac{1}{q}-\left(  \frac{1}{q_{1}}+\frac{1}{q_{2}}\right)  \right)  +1}},
\end{align*}
where in the second inequality we have used the following fact:

Noting that $\left \vert \left(  x_{0}-y_{1},\text{ }x_{0}-y_{2}\right)
\right \vert ^{2n-\alpha}\geq \left \vert x_{0}-y_{1}\right \vert ^{n-\frac
{\alpha}{2}}\left \vert x_{0}-y_{2}\right \vert ^{n-\frac{\alpha}{2}}$ and by
H\"{o}lder's inequality, we get%
\begin{align*}
&  \left \vert I_{\alpha}^{\left(  2\right)  }\left(  f_{1}^{\infty}%
,f_{2}^{\infty}\right)  \left(  x\right)  \right \vert \\
&  \lesssim%
%TCIMACRO{\dint \limits_{\mathbb{R}^{n}}}%
%BeginExpansion
{\displaystyle \int \limits_{\mathbb{R}^{n}}}
%EndExpansion%
%TCIMACRO{\dint \limits_{\mathbb{R}^{n}}}%
%BeginExpansion
{\displaystyle \int \limits_{\mathbb{R}^{n}}}
%EndExpansion
\frac{\left \vert f_{1}\left(  y_{1}\right)  \chi_{\left(  2B\right)  ^{c}%
}\right \vert \left \vert f_{2}\left(  y_{2}\right)  \chi_{\left(  B\right)
^{c}}\right \vert }{\left \vert \left(  x_{0}-y_{1},x_{0}-y_{2}\right)
\right \vert ^{2n-\alpha}}dy_{1}dy_{2}\\
&  \lesssim%
%TCIMACRO{\dint \limits_{\left(  2B\right)  ^{c}}}%
%BeginExpansion
{\displaystyle \int \limits_{\left(  2B\right)  ^{c}}}
%EndExpansion%
%TCIMACRO{\dint \limits_{\left(  2B\right)  ^{c}}}%
%BeginExpansion
{\displaystyle \int \limits_{\left(  2B\right)  ^{c}}}
%EndExpansion
\frac{\left \vert f_{1}\left(  y_{1}\right)  \right \vert \left \vert
f_{2}\left(  y_{2}\right)  \right \vert }{\left \vert x_{0}-y_{1}\right \vert
^{n-\frac{\alpha}{2}}\left \vert x_{0}-y_{2}\right \vert ^{n-\frac{\alpha}{2}}%
}dy_{1}dy_{2}\\
&  \lesssim%
%TCIMACRO{\dsum \limits_{j=1}^{\infty}}%
%BeginExpansion
{\displaystyle \sum \limits_{j=1}^{\infty}}
%EndExpansion%
%TCIMACRO{\dprod \limits_{i=1}^{2}}%
%BeginExpansion
{\displaystyle \prod \limits_{i=1}^{2}}
%EndExpansion%
%TCIMACRO{\dint \limits_{2^{j+1}B\backslash2^{j}B}}%
%BeginExpansion
{\displaystyle \int \limits_{2^{j+1}B\backslash2^{j}B}}
%EndExpansion
\frac{\left \vert f_{i}\left(  y_{i}\right)  \right \vert }{\left \vert
x_{0}-y_{i}\right \vert ^{n-\frac{\alpha}{2}}}dy_{i}\\
&  \lesssim%
%TCIMACRO{\dsum \limits_{j=1}^{\infty}}%
%BeginExpansion
{\displaystyle \sum \limits_{j=1}^{\infty}}
%EndExpansion%
%TCIMACRO{\dprod \limits_{i=1}^{2}}%
%BeginExpansion
{\displaystyle \prod \limits_{i=1}^{2}}
%EndExpansion
\left(  2^{j}r\right)  ^{-n+\frac{\alpha}{2}}%
%TCIMACRO{\dint \limits_{2^{j+1}B}}%
%BeginExpansion
{\displaystyle \int \limits_{2^{j+1}B}}
%EndExpansion
\left \vert f_{i}\left(  y_{i}\right)  \right \vert dy_{i}\\
&  \lesssim%
%TCIMACRO{\dsum \limits_{j=1}^{\infty}}%
%BeginExpansion
{\displaystyle \sum \limits_{j=1}^{\infty}}
%EndExpansion
\left(  2^{j}r\right)  ^{-2n+\alpha}%
%TCIMACRO{\dprod \limits_{i=1}^{2}}%
%BeginExpansion
{\displaystyle \prod \limits_{i=1}^{2}}
%EndExpansion
\left \Vert f_{i}\right \Vert _{L_{p_{i}}(2^{j+1}B)}\left \vert 2^{j+1}%
B\right \vert ^{1-\frac{1}{p_{i}}}\\
&  \lesssim%
%TCIMACRO{\dsum \limits_{j=1}^{\infty}}%
%BeginExpansion
{\displaystyle \sum \limits_{j=1}^{\infty}}
%EndExpansion%
%TCIMACRO{\dint \limits_{2^{j+1}r}^{2^{j+2}r}}%
%BeginExpansion
{\displaystyle \int \limits_{2^{j+1}r}^{2^{j+2}r}}
%EndExpansion
\left(  2^{j+1}r\right)  ^{-2n+\alpha-1}%
%TCIMACRO{\dprod \limits_{i=1}^{2}}%
%BeginExpansion
{\displaystyle \prod \limits_{i=1}^{2}}
%EndExpansion
\left \Vert f_{i}\right \Vert _{L_{p_{i}}(2^{j+1}B)}\left \vert 2^{j+1}%
B\right \vert ^{1-\frac{1}{p_{i}}}dt\\
&  \lesssim%
%TCIMACRO{\dsum \limits_{j=1}^{\infty}}%
%BeginExpansion
{\displaystyle \sum \limits_{j=1}^{\infty}}
%EndExpansion%
%TCIMACRO{\dint \limits_{2^{j+1}r}^{2^{j+2}r}}%
%BeginExpansion
{\displaystyle \int \limits_{2^{j+1}r}^{2^{j+2}r}}
%EndExpansion%
%TCIMACRO{\dprod \limits_{i=1}^{2}}%
%BeginExpansion
{\displaystyle \prod \limits_{i=1}^{2}}
%EndExpansion
\left \Vert f_{i}\right \Vert _{L_{p_{i}}(B\left(  x_{0},t\right)  )}\left \vert
B\left(  x_{0},t\right)  \right \vert ^{1-\frac{1}{p_{i}}}\frac{dt}%
{t^{2n+1-\alpha}}\\
&  \lesssim%
%TCIMACRO{\dint \limits_{2r}^{\infty}}%
%BeginExpansion
{\displaystyle \int \limits_{2r}^{\infty}}
%EndExpansion
\prod \limits_{i=1}^{2}\left \Vert f_{i}\right \Vert _{L_{p_{i}}\left(  B\left(
x_{0},t\right)  \right)  }\left \vert B\left(  x_{0},t\right)  \right \vert
^{2-\left(  \frac{1}{p_{1}}+\frac{1}{p_{2}}\right)  }\frac{dt}{t^{2n+1-\alpha
}}\\
&  \lesssim%
%TCIMACRO{\dint \limits_{2r}^{\infty}}%
%BeginExpansion
{\displaystyle \int \limits_{2r}^{\infty}}
%EndExpansion
\prod \limits_{i=1}^{2}\left \Vert f_{i}\right \Vert _{L_{p_{i}}\left(  B\left(
x_{0},t\right)  \right)  }\frac{dt}{t^{\frac{n}{\tau}+1}}.
\end{align*}
Moreover, for $p_{1}$, $p_{2}\in \left[  1,\infty \right)  $ the inequality
\begin{equation}
\left \Vert I_{\alpha}^{\left(  2\right)  }\left(  f_{1}^{\infty},f_{2}%
^{\infty}\right)  \right \Vert _{L_{\tau}\left(  B\left(  x_{0},r\right)
\right)  }\lesssim r^{\frac{n}{\tau}}\int \limits_{2r}^{\infty}\prod
\limits_{i=1}^{2}\left \Vert f_{i}\right \Vert _{L_{p_{i}}\left(  B\left(
x_{0},t\right)  \right)  }\frac{dt}{t^{\frac{n}{\tau}+1}} \label{0}%
\end{equation}
is valid.

For the terms $F_{42}$, $F_{43}$, similar to the estimates used for (\ref{F}),
we have to prove the following inequality:%
\begin{equation}
\left \vert I_{\alpha}^{\left(  2\right)  }\left[  f_{1}^{\infty},\left(
b_{2}-\left(  b_{2}\right)  _{B}\right)  f_{2}^{\infty}\right]  \left(
x\right)  \right \vert \lesssim \Vert b_{2}\Vert_{\ast}\int \limits_{2r}^{\infty
}\left(  1+\ln \frac{t}{r}\right)  ^{2}\prod \limits_{i=1}^{2}\left \Vert
f_{i}\right \Vert _{L_{p_{i}}\left(  B\left(  x_{0},t\right)  \right)  }%
\frac{dt}{t^{n\left(  \frac{1}{p_{1}}+\frac{1}{p_{2}}\right)  +1-\alpha}}.
\label{11}%
\end{equation}

Noting that $\left \vert \left(  x_{0}-y_{1},\text{ }x_{0}-y_{2}\right)
\right \vert ^{2n-\alpha}\geq \left \vert x_{0}-y_{1}\right \vert ^{n-\frac
{\alpha}{2}}\left \vert x_{0}-y_{2}\right \vert ^{n-\frac{\alpha}{2}}$, we get%
\begin{align*}
&  \left \vert I_{\alpha}^{\left(  2\right)  }\left[  f_{1}^{\infty},\left(
b_{2}-\left(  b_{2}\right)  _{B}\right)  f_{2}^{\infty}\right]  \left(
x\right)  \right \vert \\
&  \lesssim%
%TCIMACRO{\dint \limits_{\mathbb{R}^{n}}}%
%BeginExpansion
{\displaystyle \int \limits_{\mathbb{R}^{n}}}
%EndExpansion%
%TCIMACRO{\dint \limits_{\mathbb{R}^{n}}}%
%BeginExpansion
{\displaystyle \int \limits_{\mathbb{R}^{n}}}
%EndExpansion
\frac{\left \vert b_{2}\left(  y_{2}\right)  -\left(  b_{2}\right)
_{B}\right \vert \left \vert f_{1}\left(  y_{1}\right)  \chi_{\left(  2B\right)
^{c}}\right \vert \left \vert f_{2}\left(  y_{2}\right)  \chi_{\left(
2B\right)  ^{c}}\right \vert }{\left \vert \left(  x_{0}-y_{1},x_{0}%
-y_{2}\right)  \right \vert ^{2n-\alpha}}dy_{1}dy_{2}\\
&  \lesssim%
%TCIMACRO{\dint \limits_{\left(  2B\right)  ^{c}}}%
%BeginExpansion
{\displaystyle \int \limits_{\left(  2B\right)  ^{c}}}
%EndExpansion%
%TCIMACRO{\dint \limits_{\left(  2B\right)  ^{c}}}%
%BeginExpansion
{\displaystyle \int \limits_{\left(  2B\right)  ^{c}}}
%EndExpansion
\frac{\left \vert b_{2}\left(  y_{2}\right)  -\left(  b_{2}\right)
_{B}\right \vert \left \vert f_{1}\left(  y_{1}\right)  \right \vert \left \vert
f_{2}\left(  y_{2}\right)  \right \vert }{\left \vert x_{0}-y_{1}\right \vert
^{n-\frac{\alpha}{2}}\left \vert x_{0}-y_{2}\right \vert ^{n-\frac{\alpha}{2}}%
}dy_{1}dy_{2}\\
&  \lesssim%
%TCIMACRO{\dsum \limits_{j=1}^{\infty}}%
%BeginExpansion
{\displaystyle \sum \limits_{j=1}^{\infty}}
%EndExpansion%
%TCIMACRO{\dint \limits_{2^{j+1}B\backslash2^{j}B}}%
%BeginExpansion
{\displaystyle \int \limits_{2^{j+1}B\backslash2^{j}B}}
%EndExpansion
\frac{\left \vert f_{1}\left(  y_{1}\right)  \right \vert }{\left \vert
x_{0}-y_{1}\right \vert ^{n-\frac{\alpha}{2}}}dy_{1}%
%TCIMACRO{\dint \limits_{2^{j+1}B\backslash2^{j}B}}%
%BeginExpansion
{\displaystyle \int \limits_{2^{j+1}B\backslash2^{j}B}}
%EndExpansion
\frac{\left \vert b_{2}\left(  y_{2}\right)  -\left(  b_{2}\right)
_{B}\right \vert \left \vert f_{2}\left(  y_{2}\right)  \right \vert }{\left \vert
x_{0}-y_{2}\right \vert ^{n-\frac{\alpha}{2}}}dy_{2}\\
&  \lesssim%
%TCIMACRO{\dsum \limits_{j=1}^{\infty}}%
%BeginExpansion
{\displaystyle \sum \limits_{j=1}^{\infty}}
%EndExpansion
\left(  2^{j}r\right)  ^{-2n+\alpha}%
%TCIMACRO{\dint \limits_{2^{j+1}B}}%
%BeginExpansion
{\displaystyle \int \limits_{2^{j+1}B}}
%EndExpansion
\left \vert f_{1}\left(  y_{1}\right)  \right \vert dy_{1}%
%TCIMACRO{\dint \limits_{2^{j+1}B}}%
%BeginExpansion
{\displaystyle \int \limits_{2^{j+1}B}}
%EndExpansion
\left \vert b_{2}\left(  y_{2}\right)  -\left(  b_{2}\right)  _{B}\right \vert
\left \vert f_{2}\left(  y_{2}\right)  \right \vert dy_{2}.
\end{align*}
On the other hand, it's obvious that%
\begin{equation}%
%TCIMACRO{\dint \limits_{2^{j+1}B}}%
%BeginExpansion
{\displaystyle \int \limits_{2^{j+1}B}}
%EndExpansion
\left \vert f_{1}\left(  y_{1}\right)  \right \vert dy_{1}\leq \left \Vert
f_{1}\right \Vert _{L_{p_{1}}(2^{j+1}B)}\left \vert 2^{j+1}B\right \vert
^{1-\frac{1}{p_{1}}}, \label{f1}%
\end{equation}
and using H\"{o}lder's inequality and by (\ref{5.2}) and (\ref{c})
\begin{align}
&
%TCIMACRO{\dint \limits_{2^{j+1}B}}%
%BeginExpansion
{\displaystyle \int \limits_{2^{j+1}B}}
%EndExpansion
\left \vert b_{2}\left(  y_{2}\right)  -\left(  b_{2}\right)  _{B}\right \vert
\left \vert f_{2}\left(  y_{2}\right)  \right \vert dy_{2}\nonumber \\
&  \leq \left \Vert b_{2}-\left(  b_{2}\right)  _{2^{j+1}B}\right \Vert
_{L_{q_{2}}(2^{j+1}B)}\left \Vert f_{2}\right \Vert _{L_{p_{2}}(2^{j+1}%
B)}\left \vert 2^{j+1}B\right \vert ^{1-\left(  \frac{1}{p_{2}}+\frac{1}{q_{2}%
}\right)  }\nonumber \\
&  +\left \vert \left(  b_{2}\right)  _{2^{j+1}B}-\left(  b_{2}\right)
_{B}\right \vert \left \Vert f_{2}\right \Vert _{L_{p_{2}}(2^{j+1}B)}\left \vert
2^{j+1}B\right \vert ^{1-\frac{1}{p_{2}}}\nonumber \\
&  \lesssim \Vert b_{2}\Vert_{\ast}\left \vert 2^{j+1}B\right \vert ^{\frac
{1}{q_{2}}}\left(  1+\ln \frac{2^{j+1}r}{r}\right)  \left \Vert f_{2}\right \Vert
_{L_{p_{2}}\left(  2^{j+1}B\right)  }\left \vert 2^{j+1}B\right \vert
^{1-\left(  \frac{1}{p_{2}}+\frac{1}{q_{2}}\right)  }\nonumber \\
&  +\Vert b_{2}\Vert_{\ast}\left(  1+\ln \frac{2^{j+1}r}{r}\right)  \left \vert
2^{j+1}B\right \vert \left \Vert f_{2}\right \Vert _{L_{p_{2}}\left(
2^{j+1}B\right)  }\left \vert 2^{j+1}B\right \vert ^{1-\frac{1}{p_{2}}%
}\nonumber \\
&  \lesssim \Vert b_{2}\Vert_{\ast}\left(  1+\ln \frac{2^{j+1}r}{r}\right)
^{2}\left \vert 2^{j+1}B\right \vert ^{1-\frac{1}{p_{2}}}\left \Vert
f_{2}\right \Vert _{L_{p_{2}}\left(  2^{j+1}B\right)  }. \label{f2}%
\end{align}
Hence, by (\ref{f1}) and (\ref{f2}), it follows that:%
\begin{align*}
&  \left \vert I_{\alpha}^{\left(  2\right)  }\left[  f_{1}^{\infty},\left(
b_{2}-\left(  b_{2}\right)  _{B}\right)  f_{2}^{\infty}\right]  \left(
x\right)  \right \vert \\
&  \lesssim%
%TCIMACRO{\dsum \limits_{j=1}^{\infty}}%
%BeginExpansion
{\displaystyle \sum \limits_{j=1}^{\infty}}
%EndExpansion
\left(  2^{j}r\right)  ^{-2n+\alpha}%
%TCIMACRO{\dint \limits_{2^{j+1}B}}%
%BeginExpansion
{\displaystyle \int \limits_{2^{j+1}B}}
%EndExpansion
\left \vert f_{1}\left(  y_{1}\right)  \right \vert dy_{1}%
%TCIMACRO{\dint \limits_{2^{j+1}B}}%
%BeginExpansion
{\displaystyle \int \limits_{2^{j+1}B}}
%EndExpansion
\left \vert b_{2}\left(  y_{2}\right)  -\left(  b_{2}\right)  _{B}\right \vert
\left \vert f_{2}\left(  y_{2}\right)  \right \vert dy_{2}\\
&  \lesssim \Vert b_{2}\Vert_{\ast}%
%TCIMACRO{\dsum \limits_{j=1}^{\infty}}%
%BeginExpansion
{\displaystyle \sum \limits_{j=1}^{\infty}}
%EndExpansion
\left(  2^{j}r\right)  ^{-2n+\alpha}\left(  1+\ln \frac{2^{j+1}r}{r}\right)
^{2}\left \vert 2^{j+1}B\right \vert ^{2-\left(  \frac{1}{p_{1}}+\frac{1}{p_{2}%
}\right)  }\prod \limits_{i=1}^{2}\left \Vert f_{i}\right \Vert _{L_{p_{i}%
}\left(  2^{j+1}B\right)  }\\
&  \lesssim \Vert b_{2}\Vert_{\ast}%
%TCIMACRO{\dsum \limits_{j=1}^{\infty}}%
%BeginExpansion
{\displaystyle \sum \limits_{j=1}^{\infty}}
%EndExpansion
\int \limits_{2^{j+1}r}^{2^{j+2}r}\left(  2^{j+1}r\right)  ^{-2n+\alpha
-1}\left(  1+\ln \frac{2^{j+1}r}{r}\right)  ^{2}\left \vert 2^{j+1}B\right \vert
^{2-\left(  \frac{1}{p_{1}}+\frac{1}{p_{2}}\right)  }\prod \limits_{i=1}%
^{2}\left \Vert f_{i}\right \Vert _{L_{p_{i}}\left(  2^{j+1}B\right)  }dt\\
&  \lesssim \Vert b_{2}\Vert_{\ast}%
%TCIMACRO{\dsum \limits_{j=1}^{\infty}}%
%BeginExpansion
{\displaystyle \sum \limits_{j=1}^{\infty}}
%EndExpansion
\int \limits_{2^{j+1}r}^{2^{j+2}r}\left(  1+\ln \frac{2^{j+1}r}{r}\right)
^{2}\left \vert 2^{j+1}B\right \vert ^{2-\left(  \frac{1}{p_{1}}+\frac{1}{p_{2}%
}\right)  }\prod \limits_{i=1}^{2}\left \Vert f_{i}\right \Vert _{L_{p_{i}%
}\left(  2^{j+1}B\right)  }\frac{dt}{t^{2n-\alpha+1}}\\
&  \lesssim \Vert b_{2}\Vert_{\ast}\int \limits_{2r}^{\infty}\left(  1+\ln
\frac{t}{r}\right)  ^{2}\left \vert B\left(  x_{0},t\right)  \right \vert
^{2-\left(  \frac{1}{p_{1}}+\frac{1}{p_{2}}\right)  }\prod \limits_{i=1}%
^{2}\left \Vert f_{i}\right \Vert _{L_{p_{i}}\left(  B\left(  x_{0},t\right)
\right)  }\frac{dt}{t^{2n-\alpha+1}}\\
&  \lesssim \Vert b_{2}\Vert_{\ast}\int \limits_{2r}^{\infty}\left(  1+\ln
\frac{t}{r}\right)  ^{2}\prod \limits_{i=1}^{2}\left \Vert f_{i}\right \Vert
_{L_{p_{i}}\left(  B\left(  x_{0},t\right)  \right)  }\frac{dt}{t^{n\left(
\frac{1}{p_{1}}+\frac{1}{p_{2}}\right)  +1-\alpha}}.
\end{align*}
This completes the proof of (\ref{11}).

Now we turn to estimate $F_{42}$. Let $1<\tau<\infty$, such that $\frac{1}%
{q}=\frac{1}{q_{1}}+\frac{1}{\tau}$. Then, by H\"{o}lder's inequality,
(\ref{c}) and (\ref{11}), we obtain%
\begin{align*}
F_{42}  &  =\left \Vert \left(  b_{1}-\left(  b_{1}\right)  _{B}\right)
I_{\alpha}^{\left(  2\right)  }\left[  f_{1}^{\infty},\left(  b_{2}-\left(
b_{2}\right)  _{B}\right)  f_{2}^{\infty}\right]  \right \Vert _{L_{q}\left(
B\right)  }\\
&  \lesssim \left \Vert \left(  b_{1}-\left(  b_{1}\right)  _{B}\right)
\right \Vert _{L_{q_{1}}\left(  B\right)  }\left \Vert I_{\alpha}^{\left(
2\right)  }\left[  f_{1}^{\infty},\left(  b_{2}-\left(  b_{2}\right)
_{B}\right)  f_{2}^{\infty}\right]  \right \Vert _{L_{\tau \left(  B\right)  }%
}\\
&  \lesssim%
%TCIMACRO{\dprod \limits_{i=1}^{2}}%
%BeginExpansion
{\displaystyle \prod \limits_{i=1}^{2}}
%EndExpansion
\Vert b_{i}\Vert_{\ast}r^{\frac{n}{q}}\int \limits_{2r}^{\infty}\left(
1+\ln \frac{t}{r}\right)  ^{2}%
%TCIMACRO{\dprod \limits_{i=1}^{2}}%
%BeginExpansion
{\displaystyle \prod \limits_{i=1}^{2}}
%EndExpansion
\Vert f_{i}\Vert_{L_{p_{i}}(B\left(  x_{0},t\right)  )}\frac{dt}{t^{n\left(
\frac{1}{q}-\left(  \frac{1}{q_{1}}+\frac{1}{q_{2}}\right)  \right)  +1}}.
\end{align*}

Similarly, $F_{43}$ has the same estimate above, here we omit the details,
thus the inequality%
\begin{align*}
F_{43}  &  =\left \Vert \left(  b_{2}-\left(  b_{2}\right)  _{B}\right)
I_{\alpha}^{\left(  2\right)  }\left[  \left(  b_{1}-\left(  b_{1}\right)
_{B}\right)  f_{1}^{\infty},f_{2}^{\infty}\right]  \right \Vert _{L_{q}\left(
B\right)  }\\
&  \lesssim%
%TCIMACRO{\dprod \limits_{i=1}^{2}}%
%BeginExpansion
{\displaystyle \prod \limits_{i=1}^{2}}
%EndExpansion
\Vert b_{i}\Vert_{\ast}r^{\frac{n}{q}}\int \limits_{2r}^{\infty}\left(
1+\ln \frac{t}{r}\right)  ^{2}%
%TCIMACRO{\dprod \limits_{i=1}^{2}}%
%BeginExpansion
{\displaystyle \prod \limits_{i=1}^{2}}
%EndExpansion
\Vert f_{i}\Vert_{L_{p_{i}}(B\left(  x_{0},t\right)  )}\frac{dt}{t^{n\left(
\frac{1}{q}-\left(  \frac{1}{q_{1}}+\frac{1}{q_{2}}\right)  \right)  +1}}%
\end{align*}
is valid.

Finally, to estimate $F_{44}$, similar to the estimate of (\ref{11}), we have%
\begin{align*}
&  \left \vert I_{\alpha}^{\left(  2\right)  }\left[  \left(  b_{1}-\left(
b_{2}\right)  _{B}\right)  f_{1}^{\infty},\left(  b_{2}-\left(  b_{2}\right)
_{B}\right)  f_{2}^{\infty}\right]  \left(  x\right)  \right \vert \\
&  \lesssim%
%TCIMACRO{\dsum \limits_{j=1}^{\infty}}%
%BeginExpansion
{\displaystyle \sum \limits_{j=1}^{\infty}}
%EndExpansion
\left(  2^{j}r\right)  ^{-2n+\alpha}%
%TCIMACRO{\dint \limits_{2^{j+1}B}}%
%BeginExpansion
{\displaystyle \int \limits_{2^{j+1}B}}
%EndExpansion
\left \vert b_{1}\left(  y_{1}\right)  -\left(  b_{1}\right)  _{B}\right \vert
\left \vert f_{1}\left(  y_{1}\right)  \right \vert dy_{1}%
%TCIMACRO{\dint \limits_{2^{j+1}B}}%
%BeginExpansion
{\displaystyle \int \limits_{2^{j+1}B}}
%EndExpansion
\left \vert b_{2}\left(  y_{2}\right)  -\left(  b_{2}\right)  _{B}\right \vert
\left \vert f_{2}\left(  y_{2}\right)  \right \vert dy_{2}\\
&  \lesssim%
%TCIMACRO{\dprod \limits_{i=1}^{2}}%
%BeginExpansion
{\displaystyle \prod \limits_{i=1}^{2}}
%EndExpansion
\Vert b_{i}\Vert_{\ast}\int \limits_{2r}^{\infty}\left(  1+\ln \frac{t}%
{r}\right)  ^{2}%
%TCIMACRO{\dprod \limits_{i=1}^{2}}%
%BeginExpansion
{\displaystyle \prod \limits_{i=1}^{2}}
%EndExpansion
\Vert f_{i}\Vert_{L_{p_{i}}(B\left(  x_{0},t\right)  )}\frac{dt}{t^{n\left(
\frac{1}{q}-\left(  \frac{1}{q_{1}}+\frac{1}{q_{2}}\right)  \right)  +1}}.
\end{align*}

Thus, we have%
\begin{align*}
F_{44}  &  =\left \Vert I_{\alpha}^{\left(  2\right)  }\left[  \left(
b_{1}-\left(  b_{1}\right)  _{B}\right)  f_{1}^{\infty},\left(  b_{2}-\left(
b_{2}\right)  _{B}\right)  f_{2}^{\infty}\right]  \right \Vert _{L_{q}\left(
B\right)  }\\
&  \lesssim%
%TCIMACRO{\dprod \limits_{i=1}^{2}}%
%BeginExpansion
{\displaystyle \prod \limits_{i=1}^{2}}
%EndExpansion
\Vert b_{i}\Vert_{\ast}r^{\frac{n}{q}}\int \limits_{2r}^{\infty}\left(
1+\ln \frac{t}{r}\right)  ^{2}%
%TCIMACRO{\dprod \limits_{i=1}^{2}}%
%BeginExpansion
{\displaystyle \prod \limits_{i=1}^{2}}
%EndExpansion
\Vert f_{i}\Vert_{L_{p_{i}}(B\left(  x_{0},t\right)  )}\frac{dt}{t^{n\left(
\frac{1}{q}-\left(  \frac{1}{q_{1}}+\frac{1}{q_{2}}\right)  \right)  +1}}.
\end{align*}

By the estimates of $F_{4j}$ above, where $j=1$,$2$,$3$,$4$, we know that%
\[
F_{4}=\left \Vert I_{\alpha,\left(  b_{1},b_{2}\right)  }^{\left(  2\right)
}\left(  f_{1}^{\infty},f_{2}^{\infty}\right)  \right \Vert _{L_{q}\left(
B\left(  x_{0},r\right)  \right)  }\lesssim%
%TCIMACRO{\dprod \limits_{i=1}^{2}}%
%BeginExpansion
{\displaystyle \prod \limits_{i=1}^{2}}
%EndExpansion
\Vert b_{i}\Vert_{\ast}r^{\frac{n}{q}}\int \limits_{2r}^{\infty}\left(
1+\ln \frac{t}{r}\right)  ^{2}%
%TCIMACRO{\dprod \limits_{i=1}^{2}}%
%BeginExpansion
{\displaystyle \prod \limits_{i=1}^{2}}
%EndExpansion
\Vert f_{i}\Vert_{L_{p_{i}}(B\left(  x_{0},t\right)  )}\frac{dt}{t^{n\left(
\frac{1}{q}-\left(  \frac{1}{q_{1}}+\frac{1}{q_{2}}\right)  \right)  +1}}.
\]

Consequently, combining all the estimates for $F_{1},$ $F_{2}$, $F_{3}$,
$F_{4}$, we complete the proof of Lemma \ref{Lemma}.
\end{proof}

\section{Proofs of the main results}

Now we are ready to return to the proofs of Theorems \ref{teo1} and \ref{teo2}.

\subsection{\textbf{Proof of Theorem \ref{teo1}.}}

\begin{proof}
To prove Theorem \ref{teo1}, we will use the following relationship between
essential supremum and essential infimum%
\begin{equation}
\left(  \operatorname*{essinf}\limits_{x\in E}f\left(  x\right)  \right)
^{-1}=\operatorname*{esssup}\limits_{x\in E}\frac{1}{f\left(  x\right)  },
\label{5}%
\end{equation}
where $f$ is any real-valued nonnegative function and measurable on $E$ (see
\cite{Wheeden-Zygmund}, page 143). Indeed, we consider (\ref{51}) firstly.

Since $\overrightarrow{f}\in M_{p_{1},\varphi_{1}}\times \cdots \times
M_{p_{m},\varphi_{m}}$, by (\ref{5}) and the non-decreasing, with respect to
$t$, of the norm $%
%TCIMACRO{\dprod \limits_{i=1}^{m}}%
%BeginExpansion
{\displaystyle \prod \limits_{i=1}^{m}}
%EndExpansion
\Vert f_{i}\Vert_{L_{p_{i}}(B\left(  x,t\right)  )}$, we get%
\begin{align}
&  \frac{%
%TCIMACRO{\dprod \limits_{i=1}^{m}}%
%BeginExpansion
{\displaystyle \prod \limits_{i=1}^{m}}
%EndExpansion
\Vert f_{i}\Vert_{L_{p_{i}}(B\left(  x,t\right)  )}}{\operatorname*{essinf}%
\limits_{0<t<\tau<\infty}%
%TCIMACRO{\dprod \limits_{i=1}^{m}}%
%BeginExpansion
{\displaystyle \prod \limits_{i=1}^{m}}
%EndExpansion
\varphi_{i}(x,\tau)\tau^{\frac{n}{p}}}\leq \operatorname*{esssup}%
\limits_{0<t<\tau<\infty}\frac{%
%TCIMACRO{\dprod \limits_{i=1}^{m}}%
%BeginExpansion
{\displaystyle \prod \limits_{i=1}^{m}}
%EndExpansion
\Vert f_{i}\Vert_{L_{p_{i}}(B\left(  x,t\right)  )}}{%
%TCIMACRO{\dprod \limits_{i=1}^{m}}%
%BeginExpansion
{\displaystyle \prod \limits_{i=1}^{m}}
%EndExpansion
\varphi_{i}(x,\tau)\tau^{\frac{n}{p}}}\nonumber \\
&  \leq \operatorname*{esssup}\limits_{0<\tau<\infty,x\in{\mathbb{R}^{n}}}%
\frac{%
%TCIMACRO{\dprod \limits_{i=1}^{m}}%
%BeginExpansion
{\displaystyle \prod \limits_{i=1}^{m}}
%EndExpansion
\Vert f_{i}\Vert_{L_{p_{i}}(B\left(  x,t\right)  )}}{%
%TCIMACRO{\dprod \limits_{i=1}^{m}}%
%BeginExpansion
{\displaystyle \prod \limits_{i=1}^{m}}
%EndExpansion
\varphi_{i}(x,\tau)\tau^{\frac{n}{p}}}\leq%
%TCIMACRO{\dprod \limits_{i=1}^{m}}%
%BeginExpansion
{\displaystyle \prod \limits_{i=1}^{m}}
%EndExpansion
\left \Vert f_{i}\right \Vert _{M_{p_{i},\varphi_{i}}}. \label{9}%
\end{align}
For $1<p_{1},\ldots,p_{m}<\infty$, since $(\varphi_{1},\ldots,\varphi
_{m},\varphi)$ satisfies (\ref{50}) and by (\ref{9}), we have%
\begin{align}
&  \int \limits_{r}^{\infty}\left(  1+\ln \frac{t}{r}\right)  ^{m}%
%TCIMACRO{\dprod \limits_{i=1}^{m}}%
%BeginExpansion
{\displaystyle \prod \limits_{i=1}^{m}}
%EndExpansion
\Vert f_{i}\Vert_{L_{p_{i}}(B\left(  x,t\right)  )}\frac{dt}{t^{n\left(
\frac{1}{q}-%
%TCIMACRO{\dsum \limits_{i=1}^{m}}%
%BeginExpansion
{\displaystyle \sum \limits_{i=1}^{m}}
%EndExpansion
\frac{1}{q_{i}}\right)  +1}}\nonumber \\
&  \leq \int \limits_{r}^{\infty}\left(  1+\ln \frac{t}{r}\right)  ^{m}\frac{%
%TCIMACRO{\dprod \limits_{i=1}^{m}}%
%BeginExpansion
{\displaystyle \prod \limits_{i=1}^{m}}
%EndExpansion
\Vert f_{i}\Vert_{L_{p_{i}}(B\left(  x,t\right)  )}}{\operatorname*{essinf}%
\limits_{t<\tau<\infty}%
%TCIMACRO{\dprod \limits_{i=1}^{m}}%
%BeginExpansion
{\displaystyle \prod \limits_{i=1}^{m}}
%EndExpansion
\varphi_{i}(x,\tau)\tau^{\frac{n}{p}}}\frac{\operatorname*{essinf}%
\limits_{t<\tau<\infty}%
%TCIMACRO{\dprod \limits_{i=1}^{m}}%
%BeginExpansion
{\displaystyle \prod \limits_{i=1}^{m}}
%EndExpansion
\varphi_{i}(x,\tau)\tau^{\frac{n}{p}}}{t^{n\left(  \frac{1}{q}-%
%TCIMACRO{\dsum \limits_{i=1}^{m}}%
%BeginExpansion
{\displaystyle \sum \limits_{i=1}^{m}}
%EndExpansion
\frac{1}{q_{i}}\right)  +1}}dt\nonumber \\
&  \leq C%
%TCIMACRO{\dprod \limits_{i=1}^{m}}%
%BeginExpansion
{\displaystyle \prod \limits_{i=1}^{m}}
%EndExpansion
\left \Vert f_{i}\right \Vert _{M_{p_{i},\varphi_{i}}}\int \limits_{r}^{\infty
}\left(  1+\ln \frac{t}{r}\right)  ^{m}\frac{\operatorname*{essinf}%
\limits_{t<\tau<\infty}%
%TCIMACRO{\dprod \limits_{i=1}^{m}}%
%BeginExpansion
{\displaystyle \prod \limits_{i=1}^{m}}
%EndExpansion
\varphi_{i}(x,\tau)\tau^{\frac{n}{p}}}{t^{n\left(  \frac{1}{q}-%
%TCIMACRO{\dsum \limits_{i=1}^{m}}%
%BeginExpansion
{\displaystyle \sum \limits_{i=1}^{m}}
%EndExpansion
\frac{1}{q_{i}}\right)  +1}}dt\nonumber \\
&  \leq C%
%TCIMACRO{\dprod \limits_{i=1}^{m}}%
%BeginExpansion
{\displaystyle \prod \limits_{i=1}^{m}}
%EndExpansion
\left \Vert f_{i}\right \Vert _{M_{p_{i},\varphi_{i}}}\varphi(x,r). \label{13}%
\end{align}
Then by (\ref{200}) and (\ref{13}), we get%
\begin{align*}
\left \Vert I_{\alpha,\overrightarrow{b}}^{\left(  m\right)  }\left(
\overrightarrow{f}\right)  \right \Vert _{M_{q,\varphi}}  &  =\sup
\limits_{x\in{\mathbb{R}^{n}},r>0}\varphi \left(  x,r\right)  ^{-1}%
|B(x,r)|^{-\frac{1}{q}}\left \Vert I_{\alpha,\overrightarrow{b}}^{\left(
m\right)  }\left(  \overrightarrow{f}\right)  \right \Vert _{L_{q}\left(
B(x,r)\right)  }\\
&  \lesssim%
%TCIMACRO{\dprod \limits_{i=1}^{m}}%
%BeginExpansion
{\displaystyle \prod \limits_{i=1}^{m}}
%EndExpansion
\Vert b_{i}\Vert_{\ast}\sup_{x\in{\mathbb{R}^{n},}r>0}\varphi \left(
x_{0},r\right)  ^{-1}\int \limits_{r}^{\infty}\left(  1+\ln \frac{t}{r}\right)
^{m}%
%TCIMACRO{\dprod \limits_{i=1}^{m}}%
%BeginExpansion
{\displaystyle \prod \limits_{i=1}^{m}}
%EndExpansion
\Vert f_{i}\Vert_{L_{p_{i}}(B\left(  x_{0},t\right)  )}\frac{dt}{t^{n\left(
\frac{1}{q}-%
%TCIMACRO{\dsum \limits_{i=1}^{m}}%
%BeginExpansion
{\displaystyle \sum \limits_{i=1}^{m}}
%EndExpansion
\frac{1}{q_{i}}\right)  +1}}\\
&  \lesssim%
%TCIMACRO{\dprod \limits_{i=1}^{m}}%
%BeginExpansion
{\displaystyle \prod \limits_{i=1}^{m}}
%EndExpansion
\Vert b_{i}\Vert_{\ast}\left \Vert f_{i}\right \Vert _{M_{p_{i},\varphi_{i}}}.
\end{align*}
Thus we obtain (\ref{51}).

The conclusion of (\ref{52}) is a direct consequence of (\ref{4}) and
(\ref{51}). Indeed, from the process proving (\ref{51}), it is easy to see
that the conclusions of (\ref{51}) also hold for $\widetilde{I}%
_{\overrightarrow{b},\alpha}^{\left(  m\right)  }$. Combining this with
(\ref{4}), we can immediately obtain (\ref{52}), which completes the proof.
\end{proof}

\subsection{\textbf{Proof of Theorem \ref{teo2}.}}

\begin{proof}
Since the inequalities (\ref{53}) and (\ref{54}) hold by Theorem \ref{teo1},
we only have to prove the implication%
\begin{equation}
\lim \limits_{r\rightarrow0}\sup \limits_{x\in{\mathbb{R}^{n}}}\frac
{r^{-\frac{n}{p}}%
%TCIMACRO{\dprod \limits_{i=1}^{m}}%
%BeginExpansion
{\displaystyle \prod \limits_{i=1}^{m}}
%EndExpansion
\Vert f_{i}\Vert_{L_{p_{i}}(B\left(  x,r\right)  )}}{%
%TCIMACRO{\dprod \limits_{i=1}^{m}}%
%BeginExpansion
{\displaystyle \prod \limits_{i=1}^{m}}
%EndExpansion
\varphi_{i}(x,r)}=0\text{ implies }\lim \limits_{r\rightarrow0}\sup
\limits_{x\in{\mathbb{R}^{n}}}\frac{r^{-\frac{n}{q}}\left \Vert I_{\alpha
,\overrightarrow{b}}^{\left(  m\right)  }\left(  \overrightarrow{f}\right)
\right \Vert _{L_{q}\left(  B\left(  x,r\right)  \right)  }}{\varphi(x,r)}=0.
\label{16*}%
\end{equation}

To show that%
\[
\sup \limits_{x\in{\mathbb{R}^{n}}}\frac{r^{-\frac{n}{q}}\left \Vert
I_{\alpha,\overrightarrow{b}}^{\left(  m\right)  }\left(  \overrightarrow
{f}\right)  \right \Vert _{L_{q}\left(  B\left(  x,r\right)  \right)  }%
}{\varphi(x,r)}<\varepsilon \text{ for small }r,
\]
we use the estimate (\ref{200}):%
\[
\sup \limits_{x\in{\mathbb{R}^{n}}}\frac{r^{-\frac{n}{q}}\left \Vert
I_{\alpha,\overrightarrow{b}}^{\left(  m\right)  }\left(  \overrightarrow
{f}\right)  \right \Vert _{L_{q}\left(  B\left(  x,r\right)  \right)  }%
}{\varphi(x,r)}\lesssim \sup \limits_{x\in{\mathbb{R}^{n}}}\frac{%
%TCIMACRO{\dprod \limits_{i=1}^{m}}%
%BeginExpansion
{\displaystyle \prod \limits_{i=1}^{m}}
%EndExpansion
\Vert b_{i}\Vert_{\ast}}{\varphi(x,r)}\,%
%TCIMACRO{\dint \limits_{r}^{\infty}}%
%BeginExpansion
{\displaystyle \int \limits_{r}^{\infty}}
%EndExpansion
\left(  1+\ln \frac{t}{r}\right)  ^{m}%
%TCIMACRO{\dprod \limits_{i=1}^{m}}%
%BeginExpansion
{\displaystyle \prod \limits_{i=1}^{m}}
%EndExpansion
\Vert f_{i}\Vert_{L_{p_{i}}(B\left(  x,t\right)  )}\frac{dt}{t^{n\left(
\frac{1}{q}-%
%TCIMACRO{\dsum \limits_{i=1}^{m}}%
%BeginExpansion
{\displaystyle \sum \limits_{i=1}^{m}}
%EndExpansion
\frac{1}{q_{i}}\right)  +1}}.
\]

We take $r<\delta_{0}$, where $\delta_{0}$ is small enough and split the
integration:%
\begin{equation}
\frac{r^{-\frac{n}{q}}\left \Vert I_{\alpha,\overrightarrow{b}}^{\left(
m\right)  }\left(  \overrightarrow{f}\right)  \right \Vert _{L_{q}\left(
B\left(  x,r\right)  \right)  }}{\varphi(x,r)}\leq C\left[  I_{\delta_{0}%
}\left(  x,r\right)  +J_{\delta_{0}}\left(  x,r\right)  \right]  , \label{17*}%
\end{equation}
where $\delta_{0}>0$ (we may take $\delta_{0}<1$), and
\[
I_{\delta_{0}}\left(  x,r\right)  :=\frac{%
%TCIMACRO{\dprod \limits_{i=1}^{m}}%
%BeginExpansion
{\displaystyle \prod \limits_{i=1}^{m}}
%EndExpansion
\Vert b_{i}\Vert_{\ast}}{\varphi(x,r)}%
%TCIMACRO{\dint \limits_{r}^{\delta_{0}}}%
%BeginExpansion
{\displaystyle \int \limits_{r}^{\delta_{0}}}
%EndExpansion
\left(  1+\ln \frac{t}{r}\right)  ^{m}%
%TCIMACRO{\dprod \limits_{i=1}^{m}}%
%BeginExpansion
{\displaystyle \prod \limits_{i=1}^{m}}
%EndExpansion
\Vert f_{i}\Vert_{L_{p_{i}}(B\left(  x_{0},t\right)  )}\frac{dt}{t^{n\left(
\frac{1}{q}-%
%TCIMACRO{\dsum \limits_{i=1}^{m}}%
%BeginExpansion
{\displaystyle \sum \limits_{i=1}^{m}}
%EndExpansion
\frac{1}{q_{i}}\right)  +1}},
\]
and%
\[
J_{\delta_{0}}\left(  x,r\right)  :=\frac{%
%TCIMACRO{\dprod \limits_{i=1}^{m}}%
%BeginExpansion
{\displaystyle \prod \limits_{i=1}^{m}}
%EndExpansion
\Vert b_{i}\Vert_{\ast}}{\varphi(x,r)}%
%TCIMACRO{\dint \limits_{\delta_{0}}^{\infty}}%
%BeginExpansion
{\displaystyle \int \limits_{\delta_{0}}^{\infty}}
%EndExpansion
\left(  1+\ln \frac{t}{r}\right)  ^{m}%
%TCIMACRO{\dprod \limits_{i=1}^{m}}%
%BeginExpansion
{\displaystyle \prod \limits_{i=1}^{m}}
%EndExpansion
\Vert f_{i}\Vert_{L_{p_{i}}(B\left(  x_{0},t\right)  )}\frac{dt}{t^{n\left(
\frac{1}{q}-%
%TCIMACRO{\dsum \limits_{i=1}^{m}}%
%BeginExpansion
{\displaystyle \sum \limits_{i=1}^{m}}
%EndExpansion
\frac{1}{q_{i}}\right)  +1}}%
\]
and $r<\delta_{0}$. Now we can choose any fixed $\delta_{0}>0$ such that%
\[
\sup \limits_{x\in{\mathbb{R}^{n}}}\frac{t^{-\frac{n}{p}}%
%TCIMACRO{\dprod \limits_{i=1}^{m}}%
%BeginExpansion
{\displaystyle \prod \limits_{i=1}^{m}}
%EndExpansion
\Vert f_{i}\Vert_{L_{p_{i}}(B\left(  x,t\right)  )}}{%
%TCIMACRO{\dprod \limits_{i=1}^{m}}%
%BeginExpansion
{\displaystyle \prod \limits_{i=1}^{m}}
%EndExpansion
\varphi_{i}(x,t)}<\frac{\varepsilon}{2CC_{0}},\qquad t\leq \delta_{0},
\]
where $C$ and $C_{0}$ are constants from (\ref{10*}) and (\ref{17*}), which is
possible since $\overrightarrow{f}\in VM_{p_{1},\varphi_{1}}\times \cdots \times
VM_{p_{m},\varphi_{m}}$. This allows to estimate the first term uniformly in
$r\in \left(  0,\delta_{0}\right)  $:%
\[%
%TCIMACRO{\dprod \limits_{i=1}^{m}}%
%BeginExpansion
{\displaystyle \prod \limits_{i=1}^{m}}
%EndExpansion
\Vert b_{i}\Vert_{\ast}\sup \limits_{x\in{\mathbb{R}^{n}}}CI_{\delta_{0}%
}\left(  x,r\right)  <\frac{\varepsilon}{2},\qquad0<r<\delta_{0}%
\]
by (\ref{10*}).

For the second term, writing $1+\ln \frac{t}{r}\leq1+\left \vert \ln
t\right \vert +\ln \frac{1}{r}$, by the choice of $r$ sufficiently small because
of the conditions (\ref{11*}) we obtain%
\[
J_{\delta_{0}}\left(  x,r\right)  \leq \frac{c_{\delta_{0}}+\widetilde
{c_{\delta_{0}}}\ln \frac{1}{r}}{\varphi(x,r)}%
%TCIMACRO{\dprod \limits_{i=1}^{m}}%
%BeginExpansion
{\displaystyle \prod \limits_{i=1}^{m}}
%EndExpansion
\Vert b_{i}\Vert_{\ast}\left \Vert f_{i}\right \Vert _{VM_{p_{i},\varphi_{i}}},
\]
where $c_{\delta_{0}}$ is the constant from (\ref{12*}) with $\delta
=\delta_{0}$ and $\widetilde{c_{\delta_{0}}}$ is a similar constant with
omitted logarithmic factor in the integrand. Then, by (\ref{11*}) we can
choose small enough $r$ such that%
\[
\sup \limits_{x\in{\mathbb{R}^{n}}}J_{\delta_{0}}\left(  x,r\right)
<\frac{\varepsilon}{2},
\]
which completes the proof of (\ref{16*}).

For $M_{\alpha,\overrightarrow{b}}^{\left(  m\right)  }$, we can also use the
same method to obtain the desired result, but we omit the details. Therefore,
the proof of Theorem \ref{teo2} is completed.
\end{proof}

\end{document}